\numberwithin{equation}{section}
\theoremstyle{plain}
\newtheorem{thm}[equation]{Theorem}
\newtheorem{cor}[equation]{Corollary}
\newtheorem{lem}[equation]{Lemma}
\newtheorem{prop}[equation]{Proposition}
\newtheorem{definition}[equation]{Definition}
\newtheorem{exa}[equation]{Example}
\newtheorem{rem}[equation]{Remark}
\theoremstyle{definition}
\theoremstyle{remark}
\DeclareMathOperator{\Br}{Br}
\DeclareMathOperator{\codim}{codim}
\DeclareMathOperator{\Div}{Div}
\DeclareMathOperator{\Gal}{Gal}
\DeclareMathOperator{\Hom}{Hom}
\DeclareMathOperator{\Pic}{Pic}
\DeclareMathOperator{\Res}{Res}
\DeclareMathOperator{\Spec}{Spec}
\def\Br{{\rm Br\,}}
\def\inv{{\rm inv\,}}
\def\Gal{{\rm Gal}}
\def\ker {{\rm  Ker}}
\def\Pic{{\rm Pic}}
\def\ker{{\rm ker}}
   \newcommand{\textcyr}[1]{%
     {\fontencoding{OT2}\fontfamily{wncyr}\fontseries{m}\fontshape{n}%
      \selectfont #1}}
\newcommand{\Sha}{{\mbox{\textcyr{Sh}}}}
\DeclareFontFamily{U}{wncy}{}
\DeclareFontShape{U}{wncy}{m}{n}{%
<5>wncyr5%
<6>wncyr6%
<7>wncyr7%
<8>wncyr8%
<9>wncyr9%
<10>wncyr10%
<11>wncyr10%
<12>wncyr6%
<14>wncyr7%
<17>wncyr8%
<20>wncyr10%
<25>wncyr10}{}
\DeclareMathAlphabet{\cyr}{U}{wncy}{m}{n}
\begin{document}

\title[]
{Strong Approximation with Brauer-Manin Obstruction for Toric Varieties}

\author{Yang CAO}

\address{Yang CAO \newline School of Mathematical Sciences, \newline Capital Normal University,
\newline 105 Xisanhuanbeilu, \newline 100048 Beijing, China}

\email{yangcao1988@gmail.com}

\author{Fei XU}

\address{Fei XU \newline School of Mathematical Sciences, \newline Capital Normal University,
\newline 105 Xisanhuanbeilu, \newline 100048 Beijing, China}

\email{xufei@math.ac.cn}

\thanks{\textit{Key words} : torus, toric variety, strong approximation,
Brauer\textendash Manin obstruction}

\date{\today.}




\maketitle

\begin{abstract} For smooth open toric varieties, we establish strong approximation off infinity with Brauer-Manin obstruction.
\end{abstract}

\tableofcontents

\section{Introduction}

Strong approximation has various arithmetic application, for example to determine the existence of integral points by the local-global principle. By using Manin's idea, J.-L. Colliot-Th\'el\`ene and F. Xu established strong approximation with Brauer-Manin obstruction for homogeneous spaces of semi-simple and simply connected algebraic groups in \cite{CTX} to refine the classical strong approximation. Since then, a significant progress for strong approximation with Brauer-Manin obstruction has been made for various homogeneous spaces of linear algebraic groups in \cite{Ha08}, \cite{D}, \cite{WX}, \cite{BD} and families of homogeneous spaces in \cite{CTX1}, \cite{CTH}. In this paper, we study strong approximation with Brauer-Manin obstruction for open smooth toric varieties. Such varieties have been extensively studied over algebraic closed fields (see \cite{Fulton} and \cite{Oda}). However they are hard to study over number fields. For example, a smooth toric variety may not have an open affine toric subvariety covering over a field.

Notation and terminology are standard. Let $k$ be a number field, $\Omega_k$ be the set of all primes in $k$ and  $\infty_k$ be the set of all archimedean primes in $k$. Write $v<\infty_k$ for $v\in \Omega_k\setminus \infty_k$. Let $O_k$ be the ring of integers of $k$ and $O_{k,S}$ be the $S$-integers of $k$ for a finite set $S$ of $\Omega_k$ containing $\infty_k$. For each $v\in \Omega_k$, the completion of $k$ at $v$ is denoted by $k_v$ and the completion of $O_k$ at $v$ by $O_v$. Write $O_{v}=k_v$ for $v\in \infty_k$. Let ${\mathbf A}_k$ be the adelic ring of $k$ and ${\mathbf A}_k^\infty$ be the finite adeles of $k$.

For any scheme $X$ of finite type over $k$, we denote $$\Br(X)=H_{\text{\'et}}^2(X, \Bbb G_m),  \ \ \ \Br_1(X)= \ker[\Br(X)\rightarrow \Br(X_{\bar{k}})], \ \ \ \Br_a(X)=\Br_1(X)/\Br(k)  $$ where $\Bbb G_m$ is a group scheme defined by the multiplicative group and $X_{\bar k}=X\times_k \bar{k}$ with a fixed algebraic closure $\bar{k}$ of $k$. We also use $\Bbb A^n$ to denote an affine space of dimension $n$. For any subset $B$ of $\rm Br(X)$, one defines
$$ X({\mathbf A}_k)^B = \{ (x_v)_{v\in \Omega_k}\in X({\mathbf A}_k): \ \ \sum_{v\in \Omega_k} \inv_v(\xi(x_v))=0, \ \ \forall \xi\in B \}  $$ which is a closed subset of $X(\mathbf A_k)$. As discovered by Manin, class field theory implies that $X(k) \subseteq X(\mathbf A_k)^B$. Let $\rm{Pr_\infty}$ denote the projection from adelic points to finite adelic points.

\begin{definition} Let $X$ be a scheme of finite type over $k$, and $S$ a finite subset of $\Omega_k$.

 i) If $X(k)$ is dense in $X({\mathbf A}_k^S)$, we say $X$ satisfies strong approximation off $S$.

 ii) If $X(k)$ is dense in $ \rm{Pr_S} ( X({\mathbf A}_k)^{\Br(X)})$, we say $X$ satisfies strong approximation with Brauer-Manin obstruction off $S$.
\end{definition}

In this paper, we will study strong approximation for toric varieties defined as follows.

\begin{definition} \label{tv} Let $T$ be a torus over $k$ and $X$ be an integral normal and separated scheme of finite type over $k$ with an action of $T$
$$ m_X: T\times_k X \longrightarrow X  $$ over $k$. An open immersion $i_T : T \hookrightarrow X$ over $k$ is called a toric variety over $k$ if the following diagram commutes
\[ \begin{CD} T\times_k T @>{m_T}>> T \\
 @V{id\times i_T}VV   @VV{i_T}V  \\
 T\times_k X @>>{m_X}> X
\end{CD} \]
where $m_T$ is the multiplication of $T$. We simply write $(T\hookrightarrow X)$ or $X$ for this toric variety if the open immersion is clear.
\end{definition}

The main result of this paper is the following theorem.

\begin{thm} Any smooth toric variety over $k$ satisfies strong approximation with Brauer-Manin obstruction off $\infty_k$. \end{thm}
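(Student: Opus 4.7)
The plan is to reduce strong approximation with Brauer\textendash Manin obstruction for $X$ to strong approximation for an open subset of a quasi-trivial affine space, by building a global version of the Cox presentation of the toric variety. I would first exploit the standard exact sequence of $\Gal(\k/k)$-modules
\[
0 \longrightarrow \widehat{T} \longrightarrow \Div_T(X_{\k}) \longrightarrow \Pic(X_{\k}) \longrightarrow 0,
\]
in which $\Div_T(X_{\k})$, the free abelian group on the rays of the fan, carries a canonical structure of permutation $\Gal(\k/k)$-module. Dualising to tori yields
\[
1 \longrightarrow H \longrightarrow Q \longrightarrow T \longrightarrow 1,
\]
where $Q = \prod_i \Res_{k_i/k}\G_m$ is quasi-trivial. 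Indexing the coordinates on the classical Cox affine space by Galois orbits of rays then provides an $H$-torsor $\pi \colon Y \to X$ over $k$, with $Y$ an open subset of the quasi-trivial space $W = \prod_i \Res_{k_i/k}\A^{n_i}$ whose complement has codimension at least $2$ (a consequence of smoothness: the excluded locus $Z\subset \A^N$ is a union of coordinate subspaces indexed by subsets of rays that do not span a cone of the fan).

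\textbf{Strong approximation on $Y$, and descent.}
The quasi-trivial space $W$ satisfies strong approximation off $\infty_k$, by Weil restriction applied to the classical strong approximation for $\A^n$; because $W\setminus Y$ has codimension $\geq 2$, a standard moving argument at non-archimedean places transfers this to $Y$, without Brauer\textendash Manin obstruction. On the other hand, any torus, and in particular $H$, satisfies strong approximation with Brauer\textendash Manin obstruction off $\infty_k$ (Poitou\textendash Tate computation, or \cite{Ha08}). I would then apply the torsor-descent method of \cite{CTX}: an adelic point $(x_v)\in X(\mathbf A_k)^{\Br(X)}$ lifts, for a suitable class $\tau\in H^1(k,H)$, to an adelic point of the twisted torsor $\pi^\tau\colon Y^\tau\to X$ satisfying no residual Brauer\textendash Manin obstruction. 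Since $H^1(k,Q)=0$ by Hilbert 90, the image of $\tau$ in $H^1(k,Q)$ vanishes, so $W^\tau$ is canonically isomorphic to $W$ and $Y^\tau$ is again an open subset of this same quasi-trivial affine space with codimension-$\geq 2$ complement. Strong approximation applied to $Y^\tau$ then produces a $k$-rational point of $X$ approximating $(x_v)$ at the prescribed finite places, proving the theorem.

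\textbf{Main obstacle.}
The delicate part is the construction in the first paragraph. While the Cox presentation is classical over $\k$, the introduction warns that a smooth toric variety over $k$ may fail to admit a $T$-invariant affine open cover, so one cannot simply glue local Cox constructions. Realising $\pi\colon Y\to X$ globally over $k$, compatibly with both the fan combinatorics and the Galois action on the rays, is where I expect the bulk of the technical work to lie; once this is in place, the remaining steps reduce to strong approximation for $\A^n$ together with the established torsor-descent formalism and the compatibility between $\Br_a(X)\cong H^1(k,\Pic X_{\k})$ and the obstruction to lifting an adelic point through~$\pi$.
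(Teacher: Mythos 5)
Your proposal takes a genuinely different route from the paper (Cox torsor descent instead of a relative strong-approximation argument for tori), but it has a gap that is not merely the flagged difficulty of descending the torsor to $k$: it silently assumes $\bar{k}[X]^\times=\bar{k}^\times$. For a general open smooth toric variety the exact sequence is
\[
0\longrightarrow \bar{k}[X]^\times/\bar{k}^\times\longrightarrow \widehat{T}\longrightarrow \Div_{X_{\bar{k}}\setminus T_{\bar{k}}}(X_{\bar{k}})\longrightarrow \Pic(X_{\bar{k}})\longrightarrow 0,
\]
and the left-hand term need not vanish (e.g.\ $X=\A^1\times\G_m$, where $\bar{k}[X]^\times/\bar{k}^\times\cong\Z$). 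Consequently the dual map $Q\to T$ is not surjective, the image is only a proper subtorus, and there is no $H$-torsor $Y\to X$ with $Y$ open in a quasi-trivial affine space: the Cox space only maps onto a toric subvariety of $X$ whose torus is the image of $Q$. This is precisely the extra hypothesis under which D.~Wei proves the result (as noted in the introduction), and the paper is designed to dispense with it. The paper's mechanism is different: it builds a morphism of toric varieties $\rho:(T_0\hookrightarrow V)\to(T\hookrightarrow X)$ (Proposition~\ref{rho}), which need not be a torsor and whose underlying torus map $\rho:T_0\to T$ need not be surjective, computes $\Br_a(X)=\ker\bigl(\rho^\ast:\Br_a(T)\to\Br_a(T_0)\bigr)$ (Proposition~\ref{br}), and then invokes a \emph{relative} strong approximation statement for tori (Proposition~\ref{relative}): $T(k)$ is dense in $T(\mathbf{A}_k)_\bullet^{\ker\rho^\ast_{\Br}}/\rho\bigl(T_0(\mathbf{A}_k)_\bullet\bigr)$. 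This quotient formulation absorbs both the non-surjectivity of $\rho$ and the Brauer pairing, replacing the descent step entirely.

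Two further points. First, even when $\bar{k}[X]^\times=\bar{k}^\times$, the group $H$ dual to $\Pic(X_{\bar{k}})$ is a group of multiplicative type but not necessarily a torus — the paper's Remark~\ref{picard} exhibits $\Pic$ with torsion, contradicting the Proposition on p.~63 of \cite{Fulton} — so the appeal to strong approximation for $H$ would need the multiplicative-type version rather than the torus version. Second, the descent of the Cox torsor to $k$ that you flag as the ``bulk of the work'' is indeed nontrivial but resolvable (toric varieties have a rational point, so universal torsors exist over $k$ by Colliot-Th\'el\`ene–Sansuc); the unflagged issue above is the more serious one, since it cannot be repaired without a device like the paper's relative approximation or an explicit reduction splitting off the $\G_m$-factors corresponding to global units.
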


As a corollary, we have:

\begin{cor}
Let $S$ be a subset of $\Omega_k$, such that $\infty_k\subseteq S$. Then any smooth toric variety over $k$ satisfies strong approximation with Brauer-Manin obstruction off $S$.
\end{cor}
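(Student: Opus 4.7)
The plan is to deduce the Corollary directly from the main Theorem by a short continuity argument on adelic projections; no additional toric input is required, since enlarging the set \emph{off which} strong approximation is claimed from $\infty_k$ to $S$ only weakens the statement.

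The setup is as follows. For $S \supseteq \infty_k$, the inclusion $\Omega_k \setminus S \subseteq \Omega_k \setminus \infty_k$ provides a continuous surjection $p \colon X(\mathbf{A}_k^\infty) \to X(\mathbf{A}_k^S)$ that forgets the coordinates at the finitely many places in $S \setminus \infty_k$; here the restricted product topologies on source and target are the ones induced, for any fixed integral model, by the respective $O_{k,T}$-points. By construction $\mathrm{Pr}_S = p \circ \mathrm{Pr}_\infty$ on $X(\mathbf{A}_k)$, and the diagonal embedding $X(k) \hookrightarrow X(\mathbf{A}_k^S)$ factors as $X(k) \hookrightarrow X(\mathbf{A}_k^\infty)$ followed by $p$.

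Next I would apply the Theorem to get density of $X(k)$ in $\mathrm{Pr}_\infty(X(\mathbf{A}_k)^{\Br(X)}) \subseteq X(\mathbf{A}_k^\infty)$, and then push this density forward through the continuous map $p$. Using that continuous maps send closures into closures, I obtain
\[
\mathrm{Pr}_S\bigl(X(\mathbf{A}_k)^{\Br(X)}\bigr) \;=\; p\bigl(\mathrm{Pr}_\infty(X(\mathbf{A}_k)^{\Br(X)})\bigr) \;\subseteq\; \overline{p(X(k))} \;=\; \overline{X(k)}
\]
in $X(\mathbf{A}_k^S)$, which is precisely the definition of strong approximation with Brauer-Manin obstruction off $S$. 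Since the entire content of the Corollary is absorbed by the Theorem, there is no substantive obstacle: the whole argument is purely topological and relies only on the restricted product topology together with the inclusion $\infty_k \subseteq S$.
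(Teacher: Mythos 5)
Your argument is correct and supplies exactly the standard topological reduction that the paper leaves implicit: the coordinate-forgetting projection $p\colon X(\mathbf{A}_k^\infty)\to X(\mathbf{A}_k^S)$ is continuous and compatible with the diagonal embeddings of $X(k)$, so density off $\infty_k$ pushes forward to density off $S$. One small caveat: you describe $S\setminus\infty_k$ as finite (consistent with Definition 1.1), while the Corollary as worded allows arbitrary $S\supseteq\infty_k$; the argument still goes through in that generality, since the $p$-preimage of a basic open set of the restricted product is a union of basic opens and hence open, but it is worth noting that the continuity of $p$ requires this slightly less immediate check when $S$ is infinite.
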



Chambert-Loir and Tschinkel prove the same result in \cite{ChTs} under certain conditions by using harmonic analysis. More precisely, let $(T\hookrightarrow X)$ be a smooth projective toric variety over $k$ and $D$ a $T$-invariant divisor of $X$ with $U=X\setminus D$. Assuming the line bundle $-(K_X+D)$ is big where $K_X$ is a canonical bundle of $X$ and $\Pic(U)$ is free (see the proof of Lemma 3.5.1 in \cite{ChTs} and also Remark \ref{picard}), they establish asymptotic formulas for integral points of $U$, which imply that $U$ satisfies strong approximation with Brauer-Manin obstruction off $\infty_k$.

Also we learned that D. Wei has obtained the same result in \cite{W} under the condition $\bar{k}[X]^\times =\bar{k}^\times$. More precisely, he prove that for any smooth toric variety $X$ satisfying $\bar{k}[X]^\times =\bar{k}^\times$, any closed subset $W\subseteq X$ with ${\rm codim}(W,X)\geq 2$, and any $v_0\in \Omega_k$, the variety $X-W$ satisfies strong approximation with Brauer-Manin obstruction off $v_0$. Without the condition $\bar{k}[X]^\times =\bar{k}^\times$, this result does not hold in general (see Example \ref{contre}).

This paper is organized as follows.

In section 2, we study the structure of smooth toric varieties over an arbitrary field of characteristic 0. We give a structure theorem for affine smooth toric varieties (Proposition \ref{str}). We then defined the notion of smooth toric varieties of pure divisorial type (Definition \ref{d}) and the notion of standard toric varieties (Definition \ref{standard}). In any smooth toric variety, there exists a closed subvariety of codimension $\geq 2$, whose complement is a smooth toric variety of pure divisorial type (Proposition \ref{red}). We construct a morphism from a standard toric variety to a given toric variety, and prove a structure theorem for smooth toric varieties by this morphism (Proposition \ref{rho}).

In section 3, we extend strong approximation with Brauer-Manin obstruction off $\infty_k$ for tori proved by Harari in \cite{Ha08} to a relative strong approximation with Brauer-Manin obstruction off $\infty_k$ for tori (Proposition \ref{relative}). We establish strong approximation off $\infty_k$ for standard toric varieties (Corollary \ref{v}).

In section 4, using the morphism constructed in section 2, we establish the crucial step (Proposition \ref{int}), which gives a precise relation between the $O_v$-points of a given toric variety and the $O_v$-points of a standard toric variety for almost all place $v\in \Omega_k$. Then, by combining relative strong approximation for tori and strong approximation for standard toric varieties, we establish strong approximation with Brauer-Manin obstruction off $\infty_k$ for smooth toric varieties of pure divisorial type (Proposition \ref{typed}), and then for any smooth open toric varieties (Theorem \ref{end}).

In section 5, we give an example (Example \ref{contre}), which shows that the complement of a point in a toric variety may no longer satisfy strong approximation with Brauer-Manin obstruction off $\infty_k$. This is in contrast with the case of affine space minus a closed subscheme of codimension $\geq 2$ (Proposition \ref{cdim}).

\section{Structure of smooth toric varieties}

Toric varieties have been extensively studied over an algebraically closed field (see \cite{Fulton} and \cite{Oda}). In this section, we study the structure of toric varieties over a field $k$ with $char(k)=0$. Let $\bar k$ be an algebraic closure of $k$. For a torus $T$ over $k$, we denote the character group of $T$ by $T^*=Hom_{\bar{k}}(T, \Bbb G_m)$, which is a free $\Bbb Z$-module of finite rank with continuous action of $\Gamma_k=\Gal(\bar{k}/k)$. It is well-known that these two categories are anti-equivalent (see Proposition 1.4 of Expos\'e X in \cite{SGA3}). For convenience, we recall the following definition.

The objects of the category of toric varieties over $k$ are toric embeddings $(T\hookrightarrow X)$ over $k$, and a morphism $(T\hookrightarrow X) \xrightarrow{f} (T'\hookrightarrow X')$ in this category is given by a morphism $f: X\rightarrow X'$ of schemes over $k$ such that the restriction of $f$ to $T$ gives a homomorphism $T\xrightarrow{f|_{T}} T'$ over $k$ and the following diagram
 \[ \begin{CD} T\times_k X  @>{f|_T\times f}>> T'\times_k X' \\
 @V{m_X}VV   @VV{m_{X'}}V  \\
 X @>>{f}> X'
\end{CD}
\]
commutes over $k$.

 If $f: X\rightarrow X'$ is an isomorphism of schemes over $k$ and induces isomorphism $T\cong T'$ of tori over $k$, then $f$ is called an isomorphism of toric varieties $(T\hookrightarrow X)$ and $(T'\hookrightarrow X')$ over $k$. In this case, two toric varieties $(T\hookrightarrow X)$ and $(T'\hookrightarrow X')$ are called isomorphic over $k$.

If $f: X\rightarrow X'$ is a closed immersion over $k$, then $f$ is called a closed immersion of toric varieties over $k$.

If $f: X\rightarrow X'$ is an open immersion and $f_T$ is an isomorphism of tori over $k$, then $X$ is called an open toric subvariety of $X'$ over $k$.

The simplest example of toric variety is $\Bbb A^s \times \Bbb G_m^t$ containing the natural open torus $\Bbb G_m^{s+t}$ for some non-negative integers $s$ and $t$. Such toric varieties are the building blocks of smooth toric varieties. The following lemma is due to Sumihiro in \cite{Su}.

\begin{lem} \label{open}(Sumihiro) Let $k=\bar k$. Any toric variety $(T\hookrightarrow X)$ has a finite open covering $\{U_j\}$ of $X$ over $\bar{k}$ such that all $(T\hookrightarrow U_j)$'s are affine toric sub-varieties over $\bar{k}$. Moreover, if $X$ is smooth, then one has isomorphisms of toric varieties over $\bar k$
\[ \begin{CD} T @>{\cong}>>  \Bbb G_m^{s_j+t_j} \\
   @V{i_T}VV @VVV  \\
U_j @>>{\cong}> \Bbb A^{s_j} \times_{\bar k} \Bbb G_m^{t_j}
\end{CD} \]
with some integers $s_j, t_j\geq 0$  and $s_j+t_j=\dim (T)$ for each $j$, where $i_T$ is the open immersion in Definition \ref{tv}.
\end{lem}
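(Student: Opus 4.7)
The plan is to split the lemma into two independent parts and reduce each to the standard toric/cone dictionary over an algebraically closed field. For the first part, I would invoke Sumihiro's $T$-equivariant covering theorem: any point of a normal variety with an action of a torus over an algebraically closed field admits a $T$-stable affine open neighbourhood. Since $X$ is of finite type over $\bar k$, and hence noetherian, finitely many such neighbourhoods cover $X$, which gives the desired finite covering by affine toric subvarieties $(T \hookrightarrow U_j)$. This step really is the content of \cite{Su}, and its main difficulty lies in producing the $T$-stable affine opens; I would quote it rather than reprove it.

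For the second part, assume $X$ is smooth and fix one of the affine $T$-invariant pieces $U_j$. Via the equivalence between affine toric varieties and strongly convex rational polyhedral cones, I would write $U_j = \Spec(\bar k[\sigma_j^\vee \cap M])$, where $M = T^*$ is the character lattice, $N = \Hom(M, \mathbb Z)$ the cocharacter lattice, and $\sigma_j \subseteq N \otimes_{\mathbb Z} \mathbb R$ the cone associated to $U_j$. The open immersion $i_T : T \hookrightarrow U_j$ corresponds to the inclusion of semigroups $\sigma_j^\vee \cap M \hookrightarrow M$, and the $T$-action on $U_j$ is induced by the natural $M$-grading of $\bar k[\sigma_j^\vee \cap M]$. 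In particular, any isomorphism of semigroup algebras induced by a lattice isomorphism automatically yields an isomorphism of toric varieties.

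The key step, and the main obstacle, is to extract from the smoothness of $U_j$ the assertion that $\sigma_j$ is generated by part of a $\mathbb Z$-basis of $N$. I would argue locally at the distinguished point of the affine toric piece (the point fixed by the stabilizer subtorus of $\sigma_j$): the maximal ideal there is $T$-equivariantly spanned by those characters of $M$ that lie in the interior of $\sigma_j^\vee$, so smoothness forces the Zariski tangent space, which has dimension $\dim T$, to pin down the primitive ray generators of $\sigma_j$ as part of a $\mathbb Z$-basis $e_1, \ldots, e_{s_j+t_j}$ of $N$, with $\sigma_j = \mathbb R_{\geq 0} e_1 + \cdots + \mathbb R_{\geq 0} e_{s_j}$. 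Once this is established, the dual semigroup $\sigma_j^\vee \cap M$ is visibly generated by $e_1^\ast, \ldots, e_{s_j}^\ast$ together with $\pm e_{s_j+1}^\ast, \ldots, \pm e_{s_j+t_j}^\ast$, so $\bar k[\sigma_j^\vee \cap M] \cong \bar k[x_1, \ldots, x_{s_j}, y_1^{\pm 1}, \ldots, y_{t_j}^{\pm 1}]$, giving the required $T$-equivariant identification $U_j \cong \mathbb A^{s_j} \times_{\bar k} \mathbb G_m^{t_j}$ with $T \cong \mathbb G_m^{s_j + t_j}$ acting by coordinate multiplication.
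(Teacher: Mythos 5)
Your approach matches the paper's on both halves: Sumihiro's equivariant covering theorem for the first, and the smoothness criterion for affine toric varieties for the second (which the paper simply cites from Oda, Theorem 1.10, where you sketch the cone-combinatorics argument). There is one short but genuine step you skip: after obtaining the finite $T$-stable affine open cover $\{U_j\}$, you assert that each $(T\hookrightarrow U_j)$ is an affine toric subvariety, but this requires showing $i_T(T)\subseteq U_j$. The paper argues this from irreducibility: $U_j$ is a nonempty $T$-stable open in the irreducible $X$, so it meets the dense open $i_T(T)$; picking $i_T(t_0)\in U_j(\bar k)$ and using the $T$-action gives $i_T(T(\bar k))=i_T(T(\bar k)\cdot t_0)\subseteq U_j(\bar k)$, hence $T\hookrightarrow U_j$. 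Without this, your cover is merely a $T$-stable affine cover, not a cover by affine toric subvarieties.

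A minor imprecision in your sketch of the smoothness criterion: the maximal ideal of the distinguished point $x_{\sigma_j}$ is generated (after reducing modulo the subtorus corresponding to $\sigma_j^\perp$) by $\chi^m$ for $m\in(\sigma_j^\vee\cap M)\setminus\sigma_j^\perp$, not only those $m$ in the topological interior of $\sigma_j^\vee$; the primitive generators on the rays of $\sigma_j^\vee$, which sit on its boundary, are exactly what give a basis of $\mathfrak m/\mathfrak m^2$. The dimension count and the conclusion that $\sigma_j$ is generated by part of a $\mathbb Z$-basis of $N$ are correct, and the rest of the argument is the standard one.
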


\begin{proof} By Lemma 8 and Corollary 2 in \cite{Su}, one has a finite affine open covering  $\{U_j\}$ of $X$ over $\bar{k}$ such that all $U_j$'s are $T$-stable. Since $X$ is irreducible, one has $U_j\cap i_T(T)\neq \emptyset$ where $i_T$ is the open immersion in Definition \ref{tv}. Take $$x_0=i_T(t_0) \in U_j(\bar{k})\cap i_T(T(\bar{k}))$$ with $t_0\in T(\bar{k})$ and one obtains
$$ i_T(T(\bar{k})) = i_T(T(\bar{k})t_0)\subseteq U_j(\bar{k}) $$ by the commutative diagram in Definition \ref{tv}. Therefore $i_T: T\hookrightarrow U_j$ for all $j$ by Hilbert Nullstellensatz and all $U_j$'s are toric varieties with respect to $T$.

If $X$ is smooth, all $U_j$'s are smooth. Thus $(T\hookrightarrow U_j)$ is isomorphic to $(\Bbb G_m^{s_j+t_j} \hookrightarrow \Bbb A^{s_j} \times_{\bar k} \Bbb G_m^{t_j})$ by the criterion of smoothness for affine toric variety (see Theorem 1.10 in \cite{Oda}) for all $j$.
\end{proof}

\begin{rem} Lemma \ref{open} does not hold over a general field. For example, the conic $x^2-a y^2=z^2$ inside ${\Bbb P}^2$ over $\Bbb Q$ with $a\not\in (\Bbb Q^\times)^2$. This conic is a toric variety containing an open subset with $z\neq 0$ which is isomorphic to the restriction of scalar of the norm one torus
$$T = \Res_{\Bbb Q(\sqrt{a})/\Bbb Q}^1 (\Bbb G_m) . $$ This toric variety has no open affine toric subvariety covering over $\Bbb Q$.
\end{rem}

The set of rational points of toric varieties can be covered by open affine toric sub-varieties.

\begin{cor}\label{affine} Let $(T\hookrightarrow X)$ be a toric variety over $k$. If $x\in X(k)$, there is an open affine toric subvariety $(T\hookrightarrow M)$ of $(T\hookrightarrow X)$ over $k$ such that $x\in M(k)$. \end{cor}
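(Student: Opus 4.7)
The plan is to pass to $\bar k$, apply Sumihiro's Lemma \ref{open} to obtain an affine $T$-stable open neighborhood of the geometric point above $x$, intersect its finitely many Galois conjugates, and descend the result to $k$. The separatedness of $X$ (built into Definition \ref{tv}) together with the fact that $T$ and the action $m_X$ are defined over $k$ are what make this construction behave well under the $\Gamma_k$-action.

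More precisely, let $\bar x\in X(\bar k)$ denote the image of $x$. By Lemma \ref{open} applied to $X_{\bar k}$ there is a finite open covering $\{U_j\}_{j\in J}$ of $X_{\bar k}$ such that each $(T_{\bar k}\hookrightarrow U_j)$ is an affine toric subvariety over $\bar k$; pick $j_0$ with $\bar x\in U_{j_0}$. Since $U_{j_0}$ is of finite type over $\bar k$, it is defined already over some finite Galois extension $K/k$, so the Galois orbit $\{U_{j_0}^\sigma : \sigma\in\Gamma_k\}$ is finite. I then set
\[
M_{\bar k}\;=\;\bigcap_{\sigma\in\Gamma_k}U_{j_0}^\sigma .
\]
Because $X_{\bar k}$ is separated, this finite intersection of affine opens is again affine, so $M_{\bar k}$ is an affine open of $X_{\bar k}$. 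Since $T$ and $m_X$ are defined over $k$, the Galois action commutes with the $T$-action; hence each $U_{j_0}^\sigma$ is $T$-stable and contains $i_T(T)$, and therefore so does $M_{\bar k}$. Finally, as $x\in X(k)$, the point $\bar x$ is $\Gamma_k$-fixed, so $\bar x\in U_{j_0}^\sigma$ for every $\sigma\in\Gamma_k$, giving $\bar x\in M_{\bar k}$.

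The subscheme $M_{\bar k}$ is $\Gamma_k$-stable by construction, so Galois descent produces an open subscheme $M\hookrightarrow X$ defined over $k$ with $M\times_k\bar k=M_{\bar k}$; affineness and $T$-stability descend along the faithfully flat morphism $X_{\bar k}\to X$, and $x\in M(k)$. Thus $(T\hookrightarrow M)$ is the desired open affine toric subvariety of $(T\hookrightarrow X)$ over $k$. The only step that requires a moment's thought is the descent itself, which rests on two standard inputs already used above: a quasi-compact open in a $k$-scheme has a finite Galois orbit, and a finite intersection of affine opens in a separated scheme is affine. Everything else is formal.
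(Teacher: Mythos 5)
Your proposal is correct and follows essentially the same route as the paper: apply Sumihiro's Lemma \ref{open} to find an affine toric open containing the point over a finite Galois extension, intersect the finitely many Galois conjugates (affine by separatedness), and descend to $k$. The only cosmetic difference is that you phrase the finiteness of the orbit via quasi-compactness over $\bar k$ rather than fixing the splitting field $k'/k$ at the outset, which changes nothing of substance.
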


\begin{proof}  For $x\in X(k)$, there is a finite Galois extension $k'/k$ and an open affine toric variety $(T_k\times_k k'\hookrightarrow U)$ over $k'$ such that $x\in U(k')$ by Lemma \ref{open}. Then $$x\in M=\bigcap_{\sigma\in Gal(k'/k)} \sigma (U)$$ and $M$ is stable under $Gal(k'/k)$. One concludes that $M$ is defined over $k$ by Galois descent (see Corollary 1.7.8 in \cite{Fu}) and $(T\hookrightarrow M)$ is an open affine toric variety over $k$ by separateness of $X$.
\end{proof}

\begin{cor} \label{orb} If $(T\hookrightarrow X)$ is a smooth toric variety over $k$, then $X(\bar{k})$ consists of finitely many $T(\bar{k})$-orbits.
\end{cor}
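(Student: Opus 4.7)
The plan is to reduce to the algebraically closed case and then exploit the explicit local model given by Lemma \ref{open}.

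First I would base change to $\bar{k}$. Since the question is about $\bar{k}$-points and $T(\bar{k})$-orbits, it suffices to prove the statement for $X_{\bar{k}}$ as a toric variety for $T_{\bar{k}}$, so we may assume $k=\bar{k}$.

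Next, I apply Lemma \ref{open} (Sumihiro) to obtain a finite open cover $\{U_j\}_{j=1}^N$ of $X$ by affine $T$-stable open toric subvarieties, together with toric isomorphisms
\[
(T \hookrightarrow U_j) \;\cong\; (\mathbb{G}_m^{s_j+t_j} \hookrightarrow \mathbb{A}^{s_j}\times_{\bar{k}}\mathbb{G}_m^{t_j}).
\]
For the standard diagonal action of $\mathbb{G}_m^{s+t}$ on $\mathbb{A}^s\times \mathbb{G}_m^t$, the $\bar{k}$-orbits are indexed by the subsets $I\subseteq\{1,\dots,s\}$: the orbit corresponding to $I$ consists of those points whose first $s$ coordinates vanish precisely on $I$ and whose last $t$ coordinates are all nonzero. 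In particular there are exactly $2^{s_j}$ orbits in $U_j$.

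Finally, since each $U_j$ is $T$-stable and open, every $T(\bar{k})$-orbit $\mathcal{O}\subseteq X(\bar{k})$ that meets $U_j(\bar{k})$ satisfies $\mathcal{O}\cap U_j(\bar{k})$ being a $T(\bar{k})$-orbit of $U_j$; and every point of $X(\bar{k})$ lies in some $U_j$, so every $T(\bar{k})$-orbit meets at least one $U_j$. Hence the number of $T(\bar{k})$-orbits in $X$ is bounded by $\sum_{j=1}^N 2^{s_j}$, which is finite. The only non-routine ingredient is Sumihiro's lemma, which is already cited; the rest is a direct combinatorial count on the local models $\mathbb{A}^{s_j}\times\mathbb{G}_m^{t_j}$, so no genuine obstacle is expected.
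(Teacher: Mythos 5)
Your proof is correct and follows essentially the same approach as the paper: both reduce to $\bar{k}$, invoke Lemma \ref{open} (Sumihiro) to obtain a finite cover by $T$-stable opens isomorphic to $\mathbb{A}^{s_j}\times\mathbb{G}_m^{t_j}$, and then count the $2^{s_j}$ orbits in each local chart by the vanishing pattern of the first $s_j$ coordinates.
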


\begin{proof} By Lemma \ref{open}, one only needs to show that $(\bar{k})^s\times (\bar{k}^\times)^t$ with the natural action $(\bar{k}^\times)^{s+t}$ has finitely many orbits. Suppose $(x_i)$ and $(y_i)$ are in $(\bar{k})^s\times (\bar{k}^\times)^t$. Then $(x_i)$ and $(y_i)$ are in the same orbit of $(\bar{k}^\times)^{s+t}$  if and only if for $1\leq i\leq s+t$ either $x_i\cdot y_i \neq 0$ or $x_i=y_i=0$. This implies the finiteness of  $(\bar{k}^\times)^{s+t}$-orbits.
\end{proof}

Since the $\bar k$-orbits are finite for a smooth toric variety, by Galois descent, there is a smallest open affine toric subvariety containing a given rational point over $k$.

\begin{prop}\label{str} If $(T\hookrightarrow X)$ is a smooth affine toric variety over $k$, there is a unique closed toric subvariety
$$(\Res_{K/k}(\Bbb G_m) \hookrightarrow \Res_{K/k}(\Bbb A^1))$$
 of $(T\hookrightarrow X)$ with a finite \'etale $k$-algebra $K/k$ such that the quotient homomorphism
 $$\phi: T\rightarrow T_1 \ \ \ \text{with} \ \ \ T_1=T/\Res_{K/k}(\Bbb G_m)$$
 can be extended to a morphism $\phi: X\rightarrow T_1$ over $k$ commuting with the action
 \[ \begin{CD} T\times_k X  @>{\phi\times \phi}>> T_1\times_k T_1 \\
 @V{m_X}VV   @VV{m_{T_1}}V  \\
 X @>>{\phi}> T_1
\end{CD}
\]
and $\phi^{-1}(1)\cong \Res_{K/k}(\Bbb A^1)$.
Moreover, $\phi$ induces an isomorphism $\Br_1(T_1)\stackrel{\sim}{\rightarrow} \Br_1(X)$.
\end{prop}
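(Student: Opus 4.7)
The plan is to construct the étale algebra $K$, the torus $T_1$, and the morphism $\phi$ intrinsically from the unit group of $\bar{k}[X]$ and the Galois action on geometric boundary divisors of $X_{\bar{k}}$, and then derive the remaining assertions by Galois descent from the structure given by Lemma \ref{open}. First, Lemma \ref{open} gives $X_{\bar{k}} \cong \A^s \times_{\bar{k}} \G_m^t$ with $s+t = \dim T$; set $M := \bar{k}[X]^\times / \bar{k}^\times$, which is a $\Gamma_k$-stable saturated subgroup of $T^*$ of rank $t$, spanned on the $\bar{k}$-side by the characters of the ``torus'' coordinates. Define $T_1$ to be the $k$-torus with character module $M$. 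The quotient $T^*/M$ is free of rank $s$ with distinguished basis given by the classes of the characters $\chi_{D_i}$ attached to the irreducible boundary divisors $D_1,\ldots,D_s$ of $X_{\bar{k}}$; $\Gamma_k$ permutes this basis, so $T^*/M$ is a permutation Galois module corresponding to a finite étale $k$-algebra $K$ of rank $s$. Dualizing yields the exact sequence $0 \to \Res_{K/k}(\G_m) \to T \to T_1 \to 0$.

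Next I extend $T \to T_1$ to $\phi: X \to T_1$ and identify its fiber. Over $\bar{k}$, the subalgebra of $\bar{k}[X]$ generated by $\bar{k}^\times$ and $\bar{k}[X]^\times$ is isomorphic to the group algebra $\bar{k}[M] = \bar{k}[T_1]$ and is $\Gamma_k$-stable; by Galois descent it arises from a $k$-subalgebra canonically identified with $k[T_1]$, providing $\phi$, with $T$-equivariance checked on the $T$-isotypic decomposition of $\bar{k}[X]$. The fiber $\phi^{-1}(1) \subset X$ is defined over $k$, and over $\bar{k}$ equals $\A^s$ with the coordinate-permutation Galois action coming from $\{D_1, \ldots, D_s\}$; hence $\phi^{-1}(1) \cong \Res_{K/k}(\A^1)$ by descent. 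For uniqueness, given another closed toric subvariety $(T_0' \hookrightarrow \Res_{K'/k}(\A^1))$ with extension $\phi': X \to T_1'$, characters of $T_1'$ pull back to units on $X$, so $T_1'^* \subseteq M$; conversely, for $\chi \in M$, a lift $f \in \bar{k}[X]^\times$ with $f|_T = \chi$ restricts to a unit on $\Res_{K'/k}(\A^1)_{\bar{k}} \cong \A^s$, hence to a constant necessarily equal to $f(1_T) = 1$, forcing $\chi|_{T_0'} = 1$ and $\chi \in T_1'^*$.

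For the final isomorphism, both $\bar{k}[X]$ and $\bar{k}[T_1]$ are UFDs, so $\Pic(X_{\bar{k}}) = \Pic(T_{1,\bar{k}}) = 0$, while by construction $\phi^*: \bar{k}[T_1]^\times \xrightarrow{\sim} \bar{k}[X]^\times$ is a $\Gamma_k$-equivariant isomorphism. Comparing the five-term exact sequences of the Hochschild--Serre spectral sequences $H^p(k, H^q_{\et}(-_{\bar{k}}, \G_m)) \Rightarrow H^{p+q}_{\et}(-, \G_m)$ for $T_1$ and $X$ via pullback by $\phi$, the five-lemma gives $\phi^*: \Br_1(T_1) \xrightarrow{\sim} \Br_1(X)$. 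I expect the main technical point to be the descent step in the second paragraph: verifying that the Galois-stable $\bar{k}$-subalgebra of $\bar{k}[X]$ descends to a $k$-subalgebra canonically isomorphic to $k[T_1]$ with its Hopf structure, so that the resulting $X \to T_1$ really extends the abstract quotient homomorphism $T \to T_1$.
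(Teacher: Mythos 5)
Your construction follows the paper's proof closely: both set $T_1^* = \bar{k}[X]^\times/\bar{k}^\times$, read off $K$ from the permutation module $\Div_{X_{\bar{k}}\setminus T_{\bar{k}}}(X_{\bar{k}})$, obtain $\phi: X \to T_1$ by descending the unit-generated subalgebra of $\bar{k}[X]$, and deduce the Brauer isomorphism from Hochschild--Serre using $\Pic(X_{\bar{k}}) = \Pic(T_1\times_k\bar{k}) = 0$. The one genuine variation is in the uniqueness step: you show $M \subseteq T_1'^*$ directly, by restricting a representative $f \in \bar{k}[X]^\times$ of $\chi \in M$ to $\Res_{K'/k}(\Bbb A^1)_{\bar{k}}$ (an affine space, so $f$ is constant there) and concluding $\chi|_{T_0'} = 1$, whereas the paper restricts the faithfully flat $\phi$ to $\phi'^{-1}(1) \to \chi^{-1}(1) = \ker(\chi)$ and deduces $\bar{k}[\ker(\chi)]^\times = \bar{k}^\times$, hence $\ker(\chi)$ trivial; your route is slightly more elementary and reaches the same conclusion $T_1'^* = M = T_1^*$. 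The descent gap you flag is closed in the paper exactly as you anticipate: one takes $B = \{f\in \bar{k}[X]^\times : f(1_T) = 1\}$, a $\Gamma_k$-equivariant complement to $\bar{k}^\times$ in $\bar{k}[X]^\times$, so the $\Gamma_k$-equivariant inclusion $\bar{k}[B] \hookrightarrow \bar{k}[X]$ identifies $\bar{k}[B]$ with $\bar{k}[T_1]$ as a Hopf algebra compatibly with the Galois action, and the same normalization $x_i(1_T) = 1$ upgrades the $\Gamma_k$-permutation of the boundary divisors $D_i$ to a genuine permutation of the coordinates $x_i$, which is what yields $\phi^{-1}(1) \cong \Res_{K/k}(\Bbb A^1)$ by descent.
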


\begin{proof} Since $\Pic(X_{\bar{k}})=0$, one has the following short exact sequence
$$ 1\rightarrow \bar{k}[X]^\times /{\bar k}^\times \rightarrow \bar{k}[T]^\times /{\bar k}^\times\rightarrow \Div_{X_{\bar k} \setminus T_{\bar k}}(X_{\bar k}) \rightarrow 1 $$ of $\Gamma_k$-module by sending $f\mapsto div_{X_{\bar{k}}\setminus T_{\bar{k}}}(f)$ for any $f\in  \bar{k}[T]^\times$. There is a finite \'etale $k$-algebra $K/k$ such that
$$ (\Res_{K/k}(\Bbb G_m))^* = \Div_{X_{\bar k} \setminus T_{\bar k}}(X_{\bar k}) \ \ \ \text{and} \ \ \ T_1^*= \bar{k}[X]^\times /{\bar k}^\times . $$

Let $$ B = \{ f\in \bar k[X]^\times: \  f(1_T)=1 \} $$ which is stable under the action of $\Gamma_k$. Then $$\bar{k}[X]^\times \cong \bar k^\times \oplus B, \ \ f\mapsto (f(1), f(1)^{-1}f) $$ as $\Gamma_k$-module. The $\bar k$-algebra isomorphism $$\bar{k}[T_1]\cong \bar{k}[B]  \ \ \ \text{induced by} \ \ \ B\cong \bar{k}[X]^\times /{\bar k}^\times$$ is compatible with $\Gamma_k$-action. Moreover, the natural inclusion of $\bar k$-algebras $\bar{k}[B] \subseteq \bar{k}[X]$ is compatible with $\Gamma_k$-action as well. This gives the morphism $X \rightarrow T_1$ over $k$ which extends $\phi: T\rightarrow T_1$. Since $\phi$ is a homomorphism of tori, this implies that the above diagram commutes.

Choose compatible isomorphisms
$$T_{\bar k} \cong \Spec(\bar k[x_1, x_1^{-1},\cdots, x_{s},x_{s}^{-1}, y_1, y_1^{-1}, \cdots, y_t, y_t^{-1}]) $$ and
 $$ X_{\bar k}\cong \Bbb A^s \times_{\bar k} \Bbb G_m^t= \Spec (\bar{k} [x_1, \cdots, x_{s}, y_1, y_1^{-1}, \cdots, y_t, y_t^{-1}])  $$
 such that $x_i(1_T)=y_j(1_T)=1$ for $1\leq i\leq s$ and $1\leq j\leq t$ by Lemma \ref{open}.  Then
  $$T_1\times_k \bar k = \Spec (\bar{k} [y_1, y_1^{-1}, \cdots, y_t, y_t^{-1}]) \ \ \ \text{and} \ \ \  \bar \phi=\phi\times_k \bar k: \ \  X_{\bar k} \rightarrow T_1\times_k \bar k $$
  is the projection and
 $$ \phi^{-1}(1)\times_k \bar k= \bar{\phi}^{-1}(1)\cong \Spec (\bar{k} [x_1, \cdots, x_{s}]) .$$ Since $div_{X_{\bar{k}}\setminus T_{\bar{k}}}(x_i)=div_{X_{\bar{k}}}(x_i)$ for $1\leq i\leq s$ and the action of $\Gamma_k$ on $\{div_{X_{\bar{k}}\setminus T_{\bar{k}}}(x_i)\}_{i=1}^s $ is given by permutation, one concludes that $\Gamma_k$ acts on the coordinates $\{x_i\}_{i=1}^s$ by permutation by smoothness of $X$. This implies that
$ \phi^{-1}(1)\cong \Res_{K/k}(\Bbb A^1)  $ as required. Moreover, $\phi: X\rightarrow T_1$ is faithfully flat, since $\bar \phi=\phi\times_k \bar k$ is a projection.

Now we prove the uniqueness. Suppose that $(T\hookrightarrow X)$ contains another closed toric subvariety
$$(\Res_{K'/k}(\Bbb G_m) \hookrightarrow \Res_{K'/k}(\Bbb A^1))$$
with a finite \'etale $k$-algebra $K'/k$ such that the quotient homomorphism
$$\phi': T\rightarrow T'_1 \ \ \ \text{with} \ \ \ T'_1=T/\Res_{K'/k}(\Bbb G_m)$$
can be extended to a morphism $\phi': X\rightarrow T'_1$ over $k$ satisfying $\phi'^{-1}(1)=\Res_{K'/k}(\Bbb A^1)$. In this case, $\phi'$ induces an injective $\Gamma_k$-homomorphism
$$\chi^*: \ T_1'^*\rightarrow \bar k[X]^\times/\bar k^\times = T_1^*  \ \ \ \text{such that} \ \ \ T_1^*/\chi^*(T_1'^*) \ \text{is torsion free} $$
and $\phi'=\chi\circ \phi$ with $T_1\xrightarrow{\chi} T_1'$ is induced by $\chi^*$. Since $\phi'^{-1}(1)=\Res_{K'/k}(\Bbb A^1)$, one has
$$\bar k[\phi'^{-1}(1)]^\times =\bar k^\times .$$
Since $\phi: X\rightarrow T_1$ is faithfully flat, $\phi: \phi'^{-1}(1)\rightarrow \chi^{-1}(1)$ is faithfully flat. Thus $\phi^*: \bar k[\chi^{-1}(1)]^\times\rightarrow \bar k[\phi'^{-1}(1)]^\times=\bar k^\times$ is injective. Since $\chi^{-1}(1)={\rm ker}(\chi)$, $\bar k[{\rm ker}(\chi)]^\times=\bar k^\times$, and ${\rm ker}(\chi)$ is trivial. This implies that $T_1^*=\chi^*(T_1'^*)$ and $\chi$ is an isomorphism. One concludes that
$  \phi^{-1}(1) = \phi'^{-1}(1) $ and the uniqueness follows.

By the Hochschild-Serre spectral sequence (see Chapter III, Theorem 2.20 in \cite{Milne80}) with $\Pic(X_{\bar k})=\Pic(T_1\times_k \bar{k})=0$, we have
$$ Br_1(X) \cong H^2(k, \bar{k}[X]^\times) \cong H^2(k, \bar{k}[T_1]^\times)\cong Br_1 (T_1)$$
induced by $\phi$.
\end{proof}

The following kind of toric varieties is crucial for studying strong approximation.

\begin{definition} \label{d} A smooth toric variety $(T\hookrightarrow X)$ over $k$ is called of pure divisorial type if the dimension of any $T(\bar{k})$-orbit of $X(\bar{k})$ is $\dim(T)$ or $\dim(T)-1$. Equivalently, the dimension of any cone in the fan of $X$ is strictly less than 2.
\end{definition}

Let us give some examples of smooth toric varieties of pure divisorial type.

\begin{exa} \label{torsor} Any $\Bbb G_m$-torsor $X$ over $\Bbb P^1$ may be given the structure of smooth
toric variety $(\Bbb G_m^2\hookrightarrow X)$ of pure divisorial type.\end{exa}
\begin{proof} Let $\{U_1, U_2\}$ be an open covering of $\Bbb P^1$ such that $$U_1\cong U_2\cong \Bbb A^1 \ \ \ \text{and} \ \ \ U_1\cap U_2\cong \Bbb G_m$$ over $k$ and let $f: X\rightarrow \Bbb P^1$ be a $\Bbb G_m$-torsor. Then $f^{-1}(U_i)$ is a $\Bbb G_m$-torsor over $U_i$, and there are trivializations
\[ \begin{CD} f^{-1}(U_i)   @>{\cong}>> U_i\times_k \Bbb G_m   \\
  @V{f}VV   @VV{p_i}V   \\
 U_i @>>{id}> U_i \end{CD} \]
 where $p_i$ is the projection map for $i=1,2$. We may choose the coordinates
 $$ f^{-1}(U_i)= \Spec (k[t_i, x_i, x_i^{-1}]) $$ for $i=1,2$ such that
 $$ f^{-1}(U_1\cap U_2)= \Spec (k[t_i, t_i^{-1}, x_i, x_i^{-1} ]) $$ and one has the change of coordinates
 \begin{equation} \label{coor} \begin{cases}
  t_1=t_2^{-1}   \\
  x_1=x_2 t_2^n
 \end{cases} \end{equation}
for some $n\in \Bbb Z$. All $\Bbb G_m$-torsors over $\Bbb P^1$ are classified by the integer $n$.

Since $$f^{-1}(U_1\cap U_2)\cong \Bbb G_m \times_k \Bbb G_m$$ is a split torus, one can define an action of $f^{-1}(U_1\cap U_2)$
$$ m_i: \ \ f^{-1}(U_1\cap U_2) \times_k f^{-1}(U_i) \rightarrow f^{-1}(U_i) $$
by sending $t_i\mapsto t_i\otimes t_i$ and $x_i\mapsto x_i\otimes x_i$ for $i=1,2$. This implies that
$(f^{-1}(U_1\cap U_2) \hookrightarrow f^{-1}(U_i))$ is an affine smooth toric variety of pure divisorial type for $1\leq i\leq 2$.  Since $\{f^{-1}(U_i)\}_{i=1,2}$ is an open covering of $X$, one concludes $\{f^{-1}(U_1\cap U_2)\times_k f^{-1}(U_i)\}_{i=1,2}$ is an open covering of $f^{-1}(U_1\cap U_2)\times_k X$. In the common part
$$ [f^{-1}(U_1\cap U_2)\times_k f^{-1}(U_1)]\cap [f^{-1}(U_1\cap U_2)\times_k f^{-1}(U_2)]= f^{-1}(U_1\cap U_2)\times_k f^{-1}(U_1\cap U_2),$$ one has $m_1=m_2$ the multiplication of $f^{-1}(U_1\cap U_2)$. One can glue $m_1$ and $m_2$ along this open set and get an action
$$ m_X: \ \ f^{-1}(U_1\cap U_2)\times_k X \rightarrow X $$ over $k$. This implies that $(f^{-1}(U_1\cap U_2) \hookrightarrow X)$ is a smooth toric variety of pure divisorial type.   \end{proof}

If $n=1$, the corresponding $X$ is a universal $\Bbb G_m$-torsor over $\Bbb P^1$. In this case,  one has $$ f^{-1}(U_1)= \Spec (k[t_1, x_1, x_1^{-1}])= \Spec(k[x_2x_1^{-1}, x_1, x_1^{-1}])=\Spec(k[x_2, x_1, x_1^{-1}]) $$ and
$$ f^{-1}(U_2)= \Spec (k[t_2, x_2, x_2^{-1}])= \Spec(k[x_1x_2^{-1}, x_2, x_2^{-1}])=\Spec(k[x_1, x_2, x_2^{-1}]). $$ This implies that $X\cong \Bbb A^2 \setminus \{(0,0)\}$ over $k$.

\begin{rem}\label{picard} One can further compute $\Pic(X)$ in Example \ref{torsor} by using Proposition 6.10 in \cite{Sansuc}. Indeed, one has the following exact sequence
$$ 1\rightarrow k[X]^\times/k^\times \rightarrow \Bbb G_m^*(k) \rightarrow \Pic(\Bbb P^1) \rightarrow \Pic(X) \rightarrow 1$$ where the map $\Bbb G_m^*(k)\cong \Bbb Z \rightarrow \Pic(\Bbb P^1)$ sends $1$ to $[X]$ (see also p.313 in \cite{CTX}).

If $n=0$ in the equation $(\ref{coor})$, then $k[X]^\times/k^\times \cong \Bbb Z$. This implies that $\Pic(X)\cong \Bbb Z$. In this case, one has $X\cong \Bbb P^1\times_k \Bbb G_m$ over $k$.

Otherwise one has $k[X]^\times =k^\times$ by the equation $(\ref{coor})$. Therefore $\Pic(X)\cong \Bbb Z/n\Bbb Z$, where $n\in \Bbb Z\cong\Pic(X)$ is the element corresponding to $[X]$. This provides a counter-example to Proposition on p.63 in \cite{Fulton} which claims that $\Pic(X)$ is free. Indeed, the corresponding fan $\Delta$ of $X$ in Example \ref{torsor} consists of three cones
$$ \sigma_1=\{ re_1: r\geq 0 \}, \ \ \ \sigma_2=\{ r(-e_1+ne_2): r\geq 0 \} \ \ \ \text{and} \ \ \ \sigma_1\cap \sigma_2=0 $$ where $N=\Bbb Z e_1+\Bbb Z e_2$ is the dual lattice of $T^*$. The condition of Proposition on p.63 in \cite{Fulton} that the fan $\Delta$ is not contained in any proper subspace of $N_\Bbb R$ is equivalent to $n\neq 0$ in this case. Such an example can also be found in \cite{CLS} (p.178) Example 4.2.3 (see also Proposition 4.2.5 in \cite{CLS}).

Lemma 3.5.1 in \cite{ChTs} also claims that a Picard group is torsion free, but this lemma relies on the Proposition at p.63 in \cite{Fulton}.
\end{rem}

\begin{prop}\label{red} If $(T\hookrightarrow X)$ is a smooth toric variety over $k$, there is a unique open toric subvariety $(T\hookrightarrow Y)$ of $(T\hookrightarrow X)$ of pure divisorial type over $k$ such that $\codim(X\setminus Y,X)\geq 2$.
\end{prop}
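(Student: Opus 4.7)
The plan is to construct $Y$ first over $\bar k$ as the union of the $T(\bar k)$-orbits of dimension $\dim T$ or $\dim T -1$, then descend to $k$ by Galois descent, and finally verify the codimension and uniqueness statements.

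First I would work over $\bar k$. By Lemma \ref{open}, cover $X_{\bar k}$ by affine open toric subvarieties $U_j \cong \mathbb{A}^{s_j}\times_{\bar k}\mathbb{G}_m^{t_j}$ with $s_j+t_j = \dim T$. Under the natural $\mathbb{G}_m^{s_j+t_j}$-action, the orbits of $U_j$ are parametrized by subsets $I\subseteq\{1,\ldots,s_j\}$, namely
\[
O_{I,j}=\{(x_1,\ldots,x_{s_j},y_1,\ldots,y_{t_j}):x_i=0\iff i\in I\},
\]
of dimension $\dim T - |I|$. So the locus $Z_j\subseteq U_j$ where at least two coordinates $x_i$ vanish is closed of codimension $\geq 2$ in $U_j$, and its complement $V_j=U_j\setminus Z_j$ is exactly the union of orbits in $U_j$ of dimension $\geq \dim T - 1$. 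Since lower-dimensional orbits lie in the closure of higher-dimensional ones, the collection $\{V_j\}$ glues to an open subscheme $Y_{\bar k}\subseteq X_{\bar k}$ whose complement is a closed subset of codimension $\geq 2$, and $Y_{\bar k}$ is stable under $T(\bar k)$ because it is defined as a union of orbits. The inclusion of $T_{\bar k}$ into $Y_{\bar k}$ through $U_j$'s dense orbit and the restriction of $m_X$ make $(T_{\bar k}\hookrightarrow Y_{\bar k})$ a smooth toric variety of pure divisorial type by Definition \ref{d}.

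Next I would descend to $k$. The action of $\Gamma_k$ on $X(\bar k)$ permutes the $T(\bar k)$-orbits, and since $\sigma\in\Gamma_k$ is a $\bar k$-automorphism it preserves the dimension of each orbit. Therefore the union $Y_{\bar k}$ of orbits of dimension $\geq \dim T - 1$ is $\Gamma_k$-stable; equivalently the closed complement $X_{\bar k}\setminus Y_{\bar k}$ is $\Gamma_k$-stable. By Galois descent (Corollary 1.7.8 in \cite{Fu}, as used in the proof of Corollary \ref{affine}), $Y_{\bar k}$ descends to an open subscheme $Y\subseteq X$ over $k$. The $T$-action on $X$ restricts to a $T$-action on $Y$ because both $m_X$ and the open immersion into $Y$ are defined over $k$, so $(T\hookrightarrow Y)$ is a toric subvariety of $X$ of pure divisorial type, and $\codim(X\setminus Y,X)\geq 2$ is inherited from the statement over $\bar k$.

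For uniqueness, suppose $(T\hookrightarrow Y')$ is another such open toric subvariety. The codimension condition forces $Y'_{\bar k}$ to contain every orbit of dimension $\dim T$ or $\dim T -1$ (otherwise a full codimension-$1$ orbit would lie in the complement), while the pure divisorial condition forbids $Y'_{\bar k}$ from containing orbits of dimension $\leq \dim T -2$. Thus $Y'_{\bar k}=Y_{\bar k}$, and by faithfully flat descent $Y'=Y$.

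The main obstacle I foresee is the gluing step: one must check that the locally defined $V_j\subseteq U_j$ patch consistently on overlaps, which ultimately reduces to the observation that on $U_j\cap U_{j'}$ both sides agree with the intrinsic locus of points whose $T(\bar k)$-orbit has dimension $\geq \dim T-1$. The rest is Galois descent, which in this setting is straightforward because $Y_{\bar k}$ is intrinsically characterized by orbit dimensions and hence automatically $\Gamma_k$-stable.
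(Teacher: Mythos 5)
Your proof is correct and reaches the same endpoint as the paper's: both identify $Y_{\bar k}$ as the union of the $T(\bar k)$-orbits of dimension $\geq \dim T-1$, descend by Galois descent (the union is $\Gamma_k$-stable because $\Gamma_k$ preserves orbit dimensions), and prove uniqueness by comparing orbit content. The route differs in how openness of $Y_{\bar k}$ is established. The paper removes the locus of \emph{minimal}-dimensional orbits, which is automatically closed by the general fact that minimal orbits of algebraic group actions are closed (Borel), and then induction handles the rest; this sidesteps any gluing verification. You instead build $Y_{\bar k}$ in one step from the affine charts $U_j\cong\mathbb A^{s_j}\times\mathbb G_m^{t_j}$ of Lemma \ref{open}, defining $V_j$ as the complement of the codimension-$\geq 2$ locus where two or more coordinates vanish, and then patch the $V_j$; the patching is harmless precisely because, as you observe, $V_j$ is intrinsically the locus of points of $U_j$ with orbit dimension $\geq\dim T-1$. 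Your approach trades the appeal to Borel's closed-orbit fact for an explicit coordinate check, and avoids the induction; both are clean, and the dimension count for the complement is the same in either version.

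One small phrasing quibble: the sentence "Since lower-dimensional orbits lie in the closure of higher-dimensional ones, the collection $\{V_j\}$ glues" is not quite the reason the $V_j$ glue; the gluing follows, as you later say correctly, from the intrinsic characterization of $V_j$ by orbit dimension, and the fact that a union of open subsets of $X_{\bar k}$ is open. The closure-of-orbits remark is true but not what is doing the work there.
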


\begin{proof} Let $m$ be the minimal dimension of all $T(\bar{k})$-orbits in $X(\bar{k})$. One only needs to consider $m < \dim(T)-1$. Since the orbits of the minimal dimension are closed (see Chapter I, \S 1, 1.8 Proposition in \cite{Bo}), the union of all minimal dimensional orbits is closed by Corollary \ref{orb} and $\Gamma_k$-invariant. Therefore there is a closed sub-scheme $W$ of $X$ over $k$ such that $W(\bar{k})$ is the union  of all minimal dimensional orbits by Galois descent. Then $Y_1=X\setminus W$ is an open toric subvariety of $X$ over $k$ and the dimension of any $T(\bar{k})$-orbit of $Y_1(\bar{k})$ is greater than $m$. The existence follows from induction on $Y_1$.

Suppose $Z$ is another open toric subvariety of pure divisorial type of $X$. Since the dimension of $T(\bar k)$ orbits in $Z$ is $\dim(T)$ or $\dim(T)-1$, one has $Z\subseteq Y$ by the above construction. If one further assumes that $\dim(X\setminus Z)<\dim(T)-1$, then $X\setminus Z \subseteq X\setminus Y$ by the above construction. This implies that $Y\subseteq Z$. Therefore $Z=Y$ and the uniqueness follows.
\end{proof}

\begin{lem} \label{prod} If $(T_i\hookrightarrow X_i)$ are smooth toric varieties over $k$ and $(T_i\hookrightarrow Y_i)$ are the unique open toric subvarieties of pure divisorial type with $\codim(X_i\setminus Y_i, X_i)\geq 2$ for $1\leq i\leq n$ respectively, then the unique open toric subvariety $(\prod_{i=1}^n T_i \hookrightarrow Y)$ of pure divisorial type with $$\codim((\prod_{i=1}^n X_i)\setminus Y, \prod_{i=1}^n X_i)\geq 2 \ \ \ \text{in} \ \ \ (\prod_{i=1}^n T_i \hookrightarrow \prod_{i=1}^n X_i ) $$ is given by  $$ Y=\bigcup_{i=1}^n  (T_1\times_k \cdots \times_k T_{i-1} \times_k Y_i \times_k T_{i+1} \times_k\cdots\times_k T_n) .  $$
\end{lem}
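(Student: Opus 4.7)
The plan is to apply Proposition~\ref{red} to the product toric variety $(\prod_i T_i \hookrightarrow \prod_i X_i)$: this immediately yields existence and uniqueness of an open toric subvariety $Y$ of pure divisorial type with complement of codimension at least $2$. It then suffices to verify that the explicit candidate
\[
Y' := \bigcup_{i=1}^n T_1\times_k\cdots\times_k Y_i\times_k\cdots\times_k T_n
\]
satisfies the three defining properties; the lemma then follows from uniqueness in Proposition~\ref{red}.

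First I would check openness and toric structure. Each summand $T_1\times_k\cdots\times_k Y_i\times_k\cdots\times_k T_n$ is an open subscheme of $\prod_i X_i$, contains the open torus $\prod_i T_i$ (because $T_i\subseteq Y_i$), and is stable under $\prod_i T_i$ since each $Y_i$ is $T_i$-stable and each $T_j$ is $T_j$-stable by construction. These three properties are inherited by the union, so $Y'$ is an open toric subvariety of $(\prod_i T_i\hookrightarrow \prod_i X_i)$.

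The core step is the analysis of $\prod_i T_i(\bar k)$-orbits on $(\prod_i X_i)(\bar k)$. Because the action is coordinatewise, every such orbit has the form $O_1\times\cdots\times O_n$ with each $O_i$ a $T_i(\bar k)$-orbit in $X_i(\bar k)$, of total dimension $\sum_i \dim O_i$. By the construction of $Y_i$ in Proposition~\ref{red}, the orbits contained in $Y_i$ are exactly those of dimension $\dim T_i$ or $\dim T_i - 1$, while every remaining orbit of $X_i$ has dimension at most $\dim T_i - 2$. Hence an orbit $O_1\times\cdots\times O_n$ lies in $Y'$ iff there is some index $i$ with $O_j=T_j$ for all $j\neq i$ and $O_i\subseteq Y_i$, equivalently iff at most one factor is a proper $T_i$-orbit and such a factor has codimension at most $1$ in its $X_i$. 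This identifies the orbits contained in $Y'$ as exactly the $\prod_i T_i(\bar k)$-orbits of codimension $0$ or $1$ in $\prod_i X_i$, giving pure divisorial type; every remaining orbit has codimension at least $2$. Using the finiteness of orbits (Corollary~\ref{orb} applied to the product, or the product of the finitenesses for the factors) together with the fact that the complement of $Y'$ is closed and $\prod_i T_i$-stable, hence a finite union of closures of such orbits, I would conclude $\codim((\prod_i X_i)\setminus Y',\,\prod_i X_i)\geq 2$, and uniqueness in Proposition~\ref{red} forces $Y'=Y$.

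The only delicate step is the orbit bookkeeping in the third paragraph, but it is purely combinatorial once the product decomposition of orbits is in hand, so no serious obstacle is anticipated.
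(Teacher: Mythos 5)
Your proof is correct and follows essentially the same route as the paper: use the componentwise orbit-dimension formula to identify the orbits of codimension $0$ or $1$ in $\prod_i X_i$, observe that these are exactly the orbits contained in $Y'$, and invoke the uniqueness from Proposition~\ref{red}. You spell out a few steps the paper leaves implicit (openness, finiteness of orbits, that the complement is a union of lower-dimensional orbit closures), but the core argument is identical.
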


\begin{proof} Since
$$\dim((T_1\times_k \cdots \times_k T_n)(\bar{k}) \cdot (x_1, \dots, x_n))=\sum_{i=1}^n \dim(T_i(\bar{k})\cdot x_i) $$ for any $(x_1, \dots, x_n)\in X_1(\bar{k}) \times \cdots \times X_n(\bar{k})$, one obtains that
$$\dim((T_1\times_k \cdots \times_k T_n)(\bar{k}) \cdot (x_1, \dots, x_n))=\dim(T_1\times_k \cdots \times_k T_n)-1 $$ if and only if there is $1\leq i_0\leq n$ such that $$ \dim(T_i(\bar{k}) \cdot x_i)= \begin{cases} \dim(T_i)-1 \ \ \ & \text{if $i=i_0$} \\
\dim(T_i) \ \ \ & \text{otherwise.} \end{cases} $$ This implies that $$\bigcup_{i=1}^n  (T_1\times_k \cdots \times_k T_{i-1} \times_k Y_i \times_k T_{i+1} \times_k\cdots\times_k T_n)$$ is of pure divisorial type and contains all orbits of $\dim(T_1\times_k \cdots \times_k T_n)$ or $\dim(T_1\times_k \cdots \times_k T_n)-1$.
\end{proof}

\begin{definition}\label{standard} Let $d$ be a positive integer, and $k_i/k$ some finite field extensions for $1\leq i\leq d$. We note $K:=\prod_{i=1}^d k_i$. A smooth toric variety $(\Res_{K/k}(\Bbb G_m)\hookrightarrow X)$ over $k$ is called the standard toric vatiety respect to $K/k$, if it is the unique open toric subvariety of pure divisorial type over $k$ in
 $$( \Res_{K/k}(\Bbb G_m)\hookrightarrow \Res_{K/k}(\Bbb A^1)) \ \ \ \text{with} \ \ \  \codim(\Res_{K/k}(\Bbb A^1) \setminus X, \ \Res_{K/k}(\Bbb A^1))\geq 2.$$
\end{definition}

Let $X$ be a smooth toric variety of pure divisorial type with respect to $T$ over $k$ and
\begin{equation} \label{c} X \setminus T=\coprod_{i=1}^d  C_i \ \ \ \text{and} \ \ \ U_i = X \setminus (\coprod_{j\neq i} C_j) \end{equation} for $1\leq i\leq d$, where the $C_i$'s are integral closed sub-schemes of $X$ over $k$ with codimension one. Then $U_i$ is an open toric subvariety of $X$ over $k$ for $1\leq i\leq d$. By Lemma \ref{open}, one obtains that each $T$-orbit in $X$ over $\bar k$ is smooth. Since $C_i$ consists of the $\bar k$-orbits of $T$, one has that $C_i$ is also smooth for $1\leq i\leq d$.

Let $k_i$ be the algebraic closure of $k$ inside $k(C_i)$ for $1\leq i\leq d$. There is a closed geometrically integral sub-scheme $D_i$ over $k_i$ such that \begin{equation} \label{cd}  C_i \times_k \bar k = \coprod_{\sigma\in \Upsilon_i} \sigma (D_i) \end{equation}
where $\Upsilon_i= \Gamma_k/\Gamma_{k_i}$ is the set of all $k$-embedding of $k_i$ into $\bar k$ for $1\leq i\leq d$. Since $\Gamma_{k_i}$ acts on $ \coprod_{\tau \in \Upsilon_i, \tau\neq 1} \tau (D_i)$ stably, one concludes that $\Gamma_{\sigma(k_i)}=\sigma \Gamma_{k_i} \sigma^{-1}$ acts on
$ \coprod_{\tau\in\Upsilon_i, \tau \neq \sigma} \tau (D_i) $ stably for each $\sigma\in \Upsilon_i$. This implies that the scheme $ \coprod_{\tau\in\Upsilon_i, \tau \neq \sigma} \tau (D_i) $ is defined over $\sigma(k_i)$ for each $\sigma\in \Upsilon_i$ by Galois descent.

For each $\sigma\in \Upsilon_i$, one defines
 \begin{equation}\label{Z} \sigma(Z_i) = (X\times_k \sigma(k_i)) \setminus ((\coprod_{\tau\in\Upsilon_i, \tau \neq \sigma} \tau (D_i)) \cup (\coprod_{j\neq i} C_i\times_k \sigma(k_i))) \end{equation} which is an open toric subvariety of $(T\times_k \sigma(k_i)\hookrightarrow X\times_k \sigma(k_i))$ over $\sigma(k_i)$ for $1\leq i\leq d$. Since $D_i$ is geometrically integral, this implies that $\sigma(Z_i)$ contains only two orbits over $\bar k$ for $1\leq i\leq d$. Since $\sigma(Z_i)$ is covered by open affine toric sub-varieties over $\bar k$ by Lemma \ref{open}, the open affine toric sub-varieties which contain the closed orbit must be $\sigma(Z_i)$. This implies that $\sigma(Z_i)$ is affine and $\{ \sigma(Z_i) \times_{\sigma(k_i)} \bar{k} \}_{\sigma\in\Upsilon_i}$ is a smooth open affine toric subvariety covering of $U_i\times_k \bar{k}$ for $1\leq i\leq d$.

By Proposition \ref{str}, the short exact sequence
 \begin{equation} \label{dexact} 1\rightarrow \bar{k}[\sigma(Z_i)]^\times /{\bar k}^\times \xrightarrow{\phi_\sigma^*} \bar{k}[T]^\times /{\bar k}^\times \xrightarrow{\varrho_\sigma^*} \Bbb Z\sigma(D_i) \rightarrow 1 \end{equation}  of $\Gamma_{\sigma(k_i)}$-module given by sending $f$ to its valuation at $\sigma(D_i)$ yields the exact sequence of tori
 \begin{equation} \label{texact} 1\rightarrow \Bbb G_m \xrightarrow{\varrho_\sigma} T\times_k \sigma(k_i) \xrightarrow{\phi_\sigma} T_\sigma \rightarrow 1  \end{equation}
over $\sigma(k_i)$ with $(T_\sigma)^*=\bar{k}[\sigma(Z_i)]^\times /{\bar k}^\times$ and a closed immersion of toric varieties
\begin{equation} \label{h} (\Bbb G_m\hookrightarrow \Bbb A^1) \xrightarrow{\varrho_\sigma} ( T\times_k \sigma(k_i) \hookrightarrow \sigma(Z_i)) \end{equation}  over $\sigma(k_i)$. Moreover the morphism $\phi_\sigma$ can be extended to
\begin{equation} \label{phi} \phi_\sigma: \sigma(Z_i) \rightarrow T_\sigma  \ \ \ \text{with} \ \ \ \varrho_\sigma(\Bbb A^1)=\phi_\sigma^{-1}(1) \end{equation} for any $\sigma\in \Upsilon_i$.

\begin{lem} \label{ext} With the above notation, one considers  the homomorphism of $\Gamma_k$-modules
$$\rho_i^*: \ \  \bar{k}[T]^\times /{\bar k}^\times \rightarrow \Div_{(U_i\times_k \bar k) \setminus T_{\bar k}}(U_i\times_k \bar k) $$
sending $f$ to  $div_{(U_i\times_k \bar k) \setminus T_{\bar k}}(f)$ and obtains a homomorphism $ \Res_{k_i/k}\Bbb G_m  \xrightarrow{\rho_i}  T $ of tori over $k$ for $1\leq i\leq d$. If $(\Res_{k_i/k}\Bbb G_m \hookrightarrow V_i)$ is the standard toric variety respect to $k_i/k$,
  then the homomorphism $\rho_i$ can be extended to a morphism of toric varieties
  $$(\Res_{k_i/k} (\Bbb G_m)\hookrightarrow V_i) \xrightarrow{\rho_i} (T\hookrightarrow U_i) $$ over $k$ for $1\leq i\leq d$.
\end{lem}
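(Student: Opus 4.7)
To set the stage, note first that $\bigoplus_{\sigma\in\Upsilon_i}\mathbb{Z}\sigma(D_i)$ is, as a $\Gamma_k$-module with the permutation action on $\Upsilon_i$, the induced module $\mathrm{Ind}_{\Gamma_{k_i}}^{\Gamma_k}\mathbb{Z}$, canonically identified with the character lattice $(\Res_{k_i/k}\mathbb{G}_m)^{*}$. Thus $\rho_i^{*}$ is a $\Gamma_k$-equivariant lattice homomorphism, and the anti-equivalence between $k$-tori and $\Gamma_k$-lattices yields the required morphism $\rho_i : \Res_{k_i/k}\mathbb{G}_m\to T$ of $k$-tori. So the only real content of the lemma is the extension of $\rho_i$ to a morphism of toric varieties $V_i\to U_i$.

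My plan for this extension is to build it locally over $\bar k$ and then descend. The covering $U_i\times_k\bar k=\bigcup_{\sigma\in\Upsilon_i}(\sigma(Z_i)\times_{\sigma(k_i)}\bar k)$ from (\ref{Z}) is matched by a natural covering $V_i\times_k\bar k=\bigcup_{\sigma\in\Upsilon_i}V_{i,\sigma}$, where $V_{i,\sigma}\cong\mathbb{A}^1_\sigma\times\prod_{\tau\neq\sigma}\mathbb{G}_{m,\tau}$ is the affine open in $\prod_{\tau\in\Upsilon_i}\mathbb{A}^1_{\bar k}$ defined by $x_\tau\neq 0$ for all $\tau\neq\sigma$; each $V_{i,\sigma}$ is $\Gamma_{\sigma(k_i)}$-stable and descends to an affine scheme over $\sigma(k_i)$. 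For each $\sigma$, I would construct the local morphism $V_{i,\sigma}\to\sigma(Z_i)\times_{\sigma(k_i)}\bar k$ through the induced map on character monoids. By the exact sequence (\ref{dexact}) together with the explicit description in the proof of Proposition \ref{str}, the regular functions on $\sigma(Z_i)\times\bar k$ are spanned by the characters of $T$ with nonnegative valuation at $\sigma(D_i)$, while those on $V_{i,\sigma}$ are spanned by the characters $\sum n_\tau\tau$ of $\Res_{k_i/k}\mathbb{G}_m$ with $n_\sigma\geq 0$. Since by construction $\rho_i^{*}(f)=\sum_\tau v_{\tau(D_i)}(f)\cdot\tau$, any $f$ with $v_{\sigma(D_i)}(f)\geq 0$ is sent to an element with nonnegative $\sigma$-coefficient, so the character monoid of $\sigma(Z_i)$ maps into that of $V_{i,\sigma}$, and the desired morphism follows.

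These local morphisms all restrict to $\rho_i\times\bar k$ on the common open torus $\Res_{k_i/k}\mathbb{G}_m\times\bar k$, so they glue to a morphism $V_i\times_k\bar k\to U_i\times_k\bar k$ over $\bar k$. It is $\Gamma_k$-equivariant because $\rho_i^{*}$ is, and because the two coverings $\{V_{i,\sigma}\}$ and $\{\sigma(Z_i)\times\bar k\}$ are permuted by $\Gamma_k$ via the same action on $\Upsilon_i$; Galois descent then produces the desired morphism $\rho_i : V_i\to U_i$ over $k$. The main technical obstacle is the character-monoid inclusion invoked above: this is the one place where the specific geometric content of Proposition \ref{str}---the identification of the regular functions on an affine smooth toric variety with codimension-one closed orbit with the characters of nonnegative valuation on the boundary divisor---is essential, and matching this against the analogous description for $V_{i,\sigma}$ is what makes the extension work.
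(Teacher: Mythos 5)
Your proposal is correct and follows essentially the same route as the paper: base-change to $\bar k$, cover $V_i\times_k\bar k$ by the charts $Y_\sigma=\Spec(\bar k[x_\sigma,x_\tau,x_\tau^{-1}]_{\tau\neq\sigma})$ (your $V_{i,\sigma}$) indexed by $\Upsilon_i$, build local extensions $Y_\sigma\to\sigma(Z_i)\times_{\sigma(k_i)}\bar k$, glue along the common open torus, and descend. The only variation is in how the local extension is justified: the paper reads it off the multiplicative formula~(\ref{mult}), using that $\varrho_\sigma$ extends to the closed immersion $\Bbb A^1\hookrightarrow\sigma(Z_i)$ of~(\ref{h}) while the remaining factors land in $T$ and act on $\sigma(Z_i)$, whereas you dualize to a character-monoid inclusion --- two equivalent formulations of the same observation.
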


\begin{proof} Since $$\rho_i^*(f) =\sum_{\sigma\in \Upsilon_i} \varrho_\sigma^*(f) $$ for any $f\in\bar{k}[T]^\times /{\bar k}^\times$ by (\ref{dexact}) where $\Upsilon_i$ is the set of all $k$-embedding of $k_i$ into $\bar k$, one has
 \begin{equation} \label{mult} \rho_i: \ \Res_{k_i/k}\Bbb G_m(\bar k)=(\bar{k}\otimes_k k_i)^\times= \prod_{\sigma \in \Upsilon_i}\bar{k}^\times \rightarrow T(\bar k) ; \ \ \ (a_{\sigma})_{\sigma\in \Upsilon_i}\mapsto \prod_{\sigma\in \Upsilon_i}  \varrho_\sigma (a_\sigma) \end{equation} for $1\leq i\leq d$. Let
$$ Y_\sigma = Spec(\bar{k}[x_\sigma, x_\tau, x_\tau^{-1}]_{\tau\in \Upsilon_i; \ \tau\neq \sigma}) \subset \Res_{k_i/k} (\Bbb A^1)\times_{k} \bar{k} = Spec(\bar{k}[x_\sigma]_{\sigma\in \Upsilon_i}) $$
over $\bar k$ for each $\sigma\in \Upsilon_i$.  Then $\{Y_\sigma\}_{\sigma\in \Upsilon_i}$ is an open covering of $V_i\times_k \bar k$ for $1\leq i\leq d$.

Applying (\ref{h}) over $\bar k$, one obtains
$$ \varrho_\sigma:  Spec(\bar{k}[x_\sigma])  \rightarrow \sigma(Z_i) \times_{\sigma(k_i)} \bar{k} \subseteq U_i\times_k \bar{k} $$ and $\rho_i$ can be extended to
$$ \rho_i : \ Y_\sigma \rightarrow  \sigma(Z_i) \times_{\sigma(k_i)} \bar{k} \subseteq U_i\times_k \bar{k}$$
for each $\sigma\in \Upsilon_i$. Therefore $\rho_i$ can be extended to $V_i$ for $1\leq i\leq d$.
\end{proof}


Gluing all $\rho_i$ in Lemma \ref{ext} together for $1\leq i\leq d$, one obtains the following proposition.

\begin{prop} \label{rho} Let $(T\hookrightarrow X)$ be a smooth toric variety of pure divisorial type over $k$ and
$$ \rho: \ \  T_0 = \Res_{K/k}(\Bbb G_m) \rightarrow  T $$ be the homomorphism of tori induced by the homomorphism of $\Gamma_k$-modules
$$\rho^*: \ \bar{k}[T]^\times/\bar{k}^\times \rightarrow  \Div_{X_{\bar{k}}\setminus T_{\bar{k}}}(X_{\bar{k}}); \ \ f\mapsto div_{X_{\bar{k}}\setminus T_{\bar{k}}}(f)  $$
where $K=\prod_{i=1}^d k_i$ and $k_i$ is the algebraic closure of $k$ inside $k(C_i)$ with $C_i$ in (\ref{c}).
If $ T_0 = \Res_{K/k}(\Bbb G_m)\hookrightarrow  V$ is the standard toric variety respect to $K/k$,  then $\rho$ can be extended to a morphism of toric varieties $(T_0\hookrightarrow V)\xrightarrow{\rho} (T\hookrightarrow X)$.
\end{prop}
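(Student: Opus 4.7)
The plan is to build $\rho$ by gluing the pieces $\rho_i\colon V_i \to U_i$ from Lemma \ref{ext} via the $T$-action on $X$. First I decompose both source and target into open covers indexed by $i\in\{1,\ldots,d\}$. On the target, the $U_i$ from (\ref{c}) satisfy $X=\bigcup_{i=1}^d U_i$, since $X\setminus T=\coprod_i C_i$ and removing all $C_j$ with $j\neq i$ leaves $T\cup C_i\subseteq U_i$. On the source, the splitting $K=\prod_{i=1}^d k_i$ gives $\Res_{K/k}(\Bbb A^1)=\prod_{i=1}^d\Res_{k_i/k}(\Bbb A^1)$; applying Lemma \ref{prod} to this product (whose pure-divisorial-type open subvarieties are the standard $V_i$) identifies the standard toric variety $V$ attached to $K/k$ with $V=\bigcup_{i=1}^d W_i$, where
\[ W_i=\Res_{k_1/k}(\Bbb G_m)\times_k\cdots\times_k V_i\times_k\cdots\times_k\Res_{k_d/k}(\Bbb G_m). \]

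At the character level, the splitting $\Div_{X_{\bar k}\setminus T_{\bar k}}(X_{\bar k})=\bigoplus_{i=1}^d\bigoplus_{\sigma\in\Upsilon_i}\Bbb Z\cdot\sigma(D_i)$ gives $\rho^*=\sum_i\rho_i^*$, and hence $\rho(a_1,\ldots,a_d)=\prod_{i=1}^d\rho_i(a_i)$ as a homomorphism $T_0\to T$. I then define $\rho$ on the open $W_i\subseteq V$ by
\[ (a_1,\ldots,v_i,\ldots,a_d)\;\longmapsto\; m_X\!\bigl(\textstyle\prod_{k\neq i}\rho_k(a_k),\;\rho_i(v_i)\bigr), \]
where $\rho_i\colon V_i\to U_i$ is the extension from Lemma \ref{ext} and $m_X$ denotes the $T$-action on the $T$-stable open $U_i$. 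The image lies in $U_i\subseteq X$ because $U_i$ is $T$-stable and $\rho_i(V_i)\subseteq U_i$.

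Two compatibilities remain. For gluing, on any pairwise overlap with $i\neq j$ one has $W_i\cap W_j=T_0$, because each $W_\ell$ has the full torus factor $T_{0,k}=\Res_{k_k/k}(\Bbb G_m)$ in every slot $k\neq\ell$, so the intersection forces $T_{0,k}$ in every slot. On this common $T_0$ both local formulas collapse to $\prod_k\rho_k(a_k)\in T\subseteq X$, using that $m_X$ restricted to $T\times T$ coincides with $m_T$. For equivariance, one checks that the toric diagram commutes on each $W_i$: this follows by a short manipulation combining the equivariance of $\rho_i\colon V_i\to U_i$ (from Lemma \ref{ext}), the homomorphism property of each $\rho_k$ for $k\neq i$, associativity of $m_X$, and commutativity of $T$.

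The main technical point is the gluing of the local pieces. What makes it go through is the combinatorial observation $W_i\cap W_j=T_0$ for $i\neq j$: every pairwise overlap collapses to the torus $T_0$ itself, where $\rho$ is already canonically determined as the product homomorphism $\prod_k\rho_k$. Consequently no higher cocycle condition arises, and the local definitions assemble unambiguously to a morphism $\rho\colon V\to X$ extending the given $\rho\colon T_0\to T$ and commuting with the toric actions, as required.
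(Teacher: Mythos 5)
Your proof is correct and takes essentially the same approach as the paper: decompose $V$ into the opens $W_i$, define the extension on each $W_i$ by $(a_1,\ldots,v_i,\ldots,a_d)\mapsto m_X\bigl(\prod_{k\neq i}\rho_k(a_k),\rho_i(v_i)\bigr)$ using Lemma~\ref{ext}, and glue since every piece restricts to $\rho$ on $T_0$. You additionally spell out that $W_i\cap W_j=T_0$ for $i\neq j$ and verify the toric-equivariance diagram, both of which the paper leaves implicit.
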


\begin{proof} By Lemma \ref{prod}, one has
$$V= \bigcup_{i=1}^d (\prod_{1\leq j\leq i-1}  \Res_{k_j/k}(\Bbb G_m)\times_k V_i \times_k \prod_{i+1\leq j\leq d}  \Res_{k_j/k}(\Bbb G_m) ) $$ where $V_i$ is given in Lemma \ref{ext} for $1\leq i\leq d$. Define  $$ \aligned g_i : \  & \prod_{1\leq j\leq i-1}  \Res_{k_j/k}(\Bbb G_m)\times_k V_i \times_k \prod_{i+1\leq j\leq d}  \Res_{k_j/k}(\Bbb G_m) \xrightarrow{\rho_1\times \cdots \times \rho_d} T\times_k \cdots \times_k U_i \times_k \cdots \times_k T  \\
& \xrightarrow{id\times \cdots \times i_{U_i}\times \cdots \times id} T\times_k \cdots \times_k X \times_k \cdots \times_k T \xrightarrow{m_X} X  \endaligned $$
where $i_{U_i}$ is the open inclusion $U_i\subseteq X$ and $\rho_i$ is given in Lemma \ref{ext} and $m_X$ is the action of $T$ for $1\leq i\leq d$. Since
$ \rho^* = \oplus_{i=1}^d \rho_i^* $, one concludes that $g_i|_{T_0}=\rho$ for $1\leq i\leq d$. Therefore the morphisms $\{ g_i\}_{1\leq i\leq d}$ can be glued together to obtain the required morphism. \end{proof}

By purity (see the end of p.24 in \cite{CT93}) and Lemma \ref{red}, one only needs to compute the Brauer groups of smooth toric varieties of pure divisorial type.

\begin{prop} \label{br} One has the following exact sequence
$$0\rightarrow  \Br_a(X)\rightarrow \Br_a(T)\xrightarrow{\rho^*} \Br_a (T_0) $$ for a smooth toric variety $(T\hookrightarrow X)$ of pure divisorial type over $k$, where $\rho$ and $T_0$ are given by Proposition \ref{rho} and $\rho^*$ is the induced by $\rho$.
\end{prop}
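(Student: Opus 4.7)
The plan is to exploit the four-term exact sequence of $\Gamma_k$-modules
\[
0 \to M_X \to T^* \xrightarrow{\rho^*} T_0^* \to \Pic(X_{\bar k}) \to 0, \qquad M_X := \bar k[X]^\times/\bar k^\times,
\]
which comes from the definition of $\rho$ in Proposition \ref{rho} combined with the standard toric presentation of $\Pic(X_{\bar k})$ by characters and boundary divisors. Pure divisorial type is precisely what makes $\Div_{X_{\bar k}\setminus T_{\bar k}}(X_{\bar k}) \cong T_0^*$, since all boundary orbits have codimension one. I split this into
\[
0 \to M_X \to T^* \to N \to 0 \;\text{(A)}, \qquad 0 \to N \to T_0^* \to \Pic(X_{\bar k}) \to 0 \;\text{(B)},
\]
with $N := \im(\rho^*)$. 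Since $T_0^* = \bigoplus_{i=1}^d \Bbb Z[\Gamma_k/\Gamma_{k_i}]$ is a permutation module, Shapiro's lemma gives $H^1(k, T_0^*) = 0$, a vanishing that will be used repeatedly.

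For the tori, Hochschild--Serre combined with the splitting $\bar k[T]^\times \cong \bar k^\times \oplus T^*$ (induced by evaluation at $1_T$) yields $\Br_a(T) \cong H^2(k, T^*)$ and similarly $\Br_a(T_0) \cong H^2(k, T_0^*)$; under these identifications, $\rho^*$ on Brauer groups is induced by $\rho^* : T^* \to T_0^*$. Injectivity of $\Br_a(X) \hookrightarrow \Br_a(T)$ follows from purity applied to the smooth divisor decomposition $X \setminus T = \coprod C_i$. For the vanishing of the composition $\Br_a(X) \to \Br_a(T) \xrightarrow{\rho^*} \Br_a(T_0)$, I factor $T_0 \xrightarrow{\rho} T \hookrightarrow X$ as $T_0 \hookrightarrow V \xrightarrow{\rho} X$ and observe that $\Br_a(V) = 0$: since $V$ differs from $\Res_{K/k}(\Bbb A^1)$ by a codimension $\geq 2$ closed subset, one has $M_V = 0$ and $\Pic(V_{\bar k}) = 0$, whence Hochschild--Serre forces $\Br_a(V) = 0$.

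The decisive step is exactness at $\Br_a(T)$. Chasing the long exact sequences of (A) and (B) together with $H^1(k, T_0^*) = 0$, one sees that $K := \ker\bigl(\rho^* : H^2(k, T^*) \to H^2(k, T_0^*)\bigr)$ fits in
\[
0 \to \im\bigl(H^2(k, M_X) \to H^2(k, T^*)\bigr) \to K \to H^1(k, \Pic(X_{\bar k})) \cap \im\bigl(H^2(k, T^*) \to H^2(k, N)\bigr) \to 0.
\]
Meanwhile, the Hochschild--Serre spectral sequence for $X$, after quotienting by the $\Br(k)$-summand split off by the rational point $1_T$, presents $\Br_a(X)$ as an extension with the very same outer groups, provided that the $d_2$ differentials $\Pic(X_{\bar k})^{\Gamma_k} \to H^2(k, M_X)$ and $H^1(k, \Pic(X_{\bar k})) \to H^3(k, M_X)$ are identified with the iterated connecting maps $\delta_A \circ \delta_B^0$ and $\delta_A \circ \delta_B^1$ of the four-term sequence. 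Granting this compatibility, the natural map $\Br_a(X) \to K$ (induced by $\Br_a(X) \to \Br_a(T)$) induces the identity on both outer terms, and the five lemma yields $\Br_a(X) \cong K$, which is the claimed exactness.

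The hardest step will be this last identification: it amounts to the assertion that $d_2$ is the Yoneda cup product with the class of the four-term sequence in $\Ext^2_{\Gamma_k}(\Pic(X_{\bar k}), M_X)$. Although standard in principle, carrying it out cleanly in the Hochschild--Serre framework requires some care; a possible alternative is to pass to the two-term complex $[\bar k[T]^\times \to T_0^*]$ of $\Gamma_k$-modules associated with the open immersion $T \hookrightarrow X$ and compute $\Br_a(X)$ via hypercohomology directly, thereby bypassing the spectral sequence comparison altogether.
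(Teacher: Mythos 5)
Your plan rests on the same underlying algebra as the paper's proof: the four-term $\Gamma_k$-module sequence $0 \to M_X \to T^* \to T_0^* \to \Pic(X_{\bar k}) \to 0$, the identifications $\Br_a(T) \cong H^2(k,T^*)$ and $\Br_a(T_0) \cong H^2(k,T_0^*)$, and the Hochschild--Serre presentation of $\Br_a(X)$. Indeed the paper's proof is a direct reading of the Colliot-Th\'el\`ene--Sansuc fundamental diagram (\cite{CTS87} \S 1, reproduced as Diagram 4.15 in \cite{Sko}), which is precisely the commutative diagram with exact rows and columns that your computation is trying to reconstruct piece by piece.

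The one real gap --- and you flag it yourself --- is the compatibility you must ``grant'': that the Hochschild--Serre differentials $d_2 \colon \Pic(X_{\bar k})^{\Gamma_k} \to H^2(k,M_X)$ and $d_2 \colon H^1(k,\Pic(X_{\bar k})) \to H^3(k,M_X)$ coincide with the iterated connecting maps $\delta_A \circ \delta_B$ of your sequences (A) and (B). Without this there is no map of extensions from $\Br_a(X)$ to $K = \ker(\rho^*)$ on which to run the five lemma, so the exactness at $\Br_a(T)$ does not follow. This compatibility is exactly what the Colliot-Th\'el\`ene--Sansuc diagram encodes (the $d_2$'s are the Yoneda products with the class of the four-term sequence in $\Ext^2_{\Gamma_k}(\Pic(X_{\bar k}),M_X)$), so the paper simply invokes it; once the diagram is in hand, the exact middle column $0 \to \Br_a(X) \to \Br_a(T) \to H^2(k,T_0^*)$ already gives the statement without any five-lemma argument, and your separate factorization of the composite through $\Br_a(V)=0$, while a pleasant independent check, becomes superfluous. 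Your closing suggestion --- computing $\Br_a(X)$ as hypercohomology of the two-term complex $[\bar k[T]^\times \to T_0^*]$ --- is indeed the cleanest self-contained way to fill the gap if you prefer not to cite the diagram.
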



\begin{proof}
From Colliot-Th\'el\`ene and Sansuc \cite{CTS87} \S 1 (see also Diagram 4.15 in \cite{Sko}), we have a commutative diagram with exact rows and exact columns
$$\xymatrix{0\ar[r]&H^2(k,\bar{k}[X]^\times/\bar{k}^\times)\ar[r]\ar[d]&Br_a(X)\ar[r]\ar[d]&H^1(k,Pic(X_{\bar{k}}))\ar[d]\\
0\ar[r]&H^2(k,T^*)\ar[r]^{h_1}\ar[d]^{h_2}&Br_a(T)\ar[r]\ar[d]^{h_3}&H^1(k,Pic(T_{\bar{k}}))=0\ar[d]\\
0\ar[r]&H^2(k, \bar{k}[T]^\times/\bar{k}[X]^\times)\ar[r]^{h_4}&H^2(k,Div_{X_{\bar{k}}-T_{\bar{k}}}(X_{\bar{k}}))\ar[r] & H^2(k,Pic_{X_{\bar{k}}-T_{\bar{k}}}(X_{\bar{k}})) }.$$
Since $T_0^*\cong Div_{X_{\bar{k}}-T_{\bar{k}}}(X_{\bar{k}})$, the result follows from that fact that $h_3\circ h_1=h_4\circ h_2$ is induced by $\rho^*: T^*\rightarrow T_0^*$.
\end{proof}

\section{Relative strong approximation for tori}

In this section, we extend strong approximation with Brauer-Manin obstruction off $\infty_k$ for tori proved by Harari in \cite{Ha08} to the relative situation. In \cite{D}, Demarche used a similar idea for studying hyper-cohomology of complexes of two tori with finite kernel to establish strong approximation with Brauer-Manin obstruction off $\infty_k$ for reductive groups.

\begin{definition} Let $X$ be a separated integral scheme of finite type over $k$. An integral model $\bf X$ of $X$ over $O_k$ (or $O_{k,S}$ for some finite subset S of $\Omega_k$ containing $\infty_k$) is defined to be a separated integral scheme of finite type over $O_k$ (or $O_{k,S}$) such that ${\bf X}\times_{O_k} k=X$ (or ${\bf X}\times_{O_{k,S}} k=X$).

If $T$ is a group of multiplicative type over $k$, an integral model $\bf T$ of $T$ over $O_{k}$ (or $O_{k,S}$) is defined to be an integral model of $T$ which is a group scheme of multiplicative type over $O_k$ (or $O_{k,S}$) extended from $T$.
\end{definition}

Let $X$ be a separated integral scheme of finite type over $k$ and $\pi_0(X(k_v))$ be the set of connected components of $X(k_v)$ for each $v\in \infty_k$. Define
$$ X(\mathbf A_k)_{\bullet}= [\prod_{v\in \infty_k} \pi_0(X(k_v))]\times X(\mathbf A_k^\infty) $$ and
$$ X({\mathbf A}_k)_{\bullet}^B = \{ (x_v)_{v\in \Omega_k}\in X({\mathbf A}_k)_{\bullet}: \ \ \sum_{v\in \Omega_k} \inv_v(\xi(x_v))=0, \ \ \forall \xi\in B \}  $$
for any finite subset $B$ of $Br_a(X)$. This is well-defined because any element in $Br_a(X)$ takes a constant value on each connected component of $X(k_v)$ for any $v\in \infty_k$.

\begin{lem}\label{image} Let $\psi: T_1 \rightarrow T_2$ be a homomorphism of tori. Then $\psi(T_1(k_v))$ is closed in $T_2(k_v)$ for all $v\in \Omega_k$.
\end{lem}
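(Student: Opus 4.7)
The strategy is to factor $\psi$ through its image subtorus and combine two elementary facts: a closed immersion of $k$-group schemes induces a closed topological embedding on $k_v$-points, while a smooth surjection of $k$-varieties induces an open map on $k_v$-points.

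First, I would write $\psi = \iota \circ \pi$, where $\pi \colon T_1 \twoheadrightarrow T_3 := T_1/\ker(\psi)$ is the faithfully flat quotient in the category of $k$-group schemes, and $\iota \colon T_3 \hookrightarrow T_2$ is the induced injective homomorphism of tori. Since $T_3$ is again a torus and $\iota$ is a monomorphism of finite-type separated $k$-group schemes, $\iota$ is in fact a closed immersion of $k$-schemes, and so it induces a closed topological embedding $T_3(k_v) \hookrightarrow T_2(k_v)$. It therefore suffices to show that $\pi(T_1(k_v))$ is closed in $T_3(k_v)$.

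Second, because $\car(k)=0$, the surjective homomorphism $\pi \colon T_1 \to T_3$ is smooth. The implicit function theorem in the $v$-adic analytic category (applicable uniformly whether $v$ is archimedean or non-archimedean, since étale-locally on the source $\pi$ becomes a projection from an affine space) then shows that $\pi \colon T_1(k_v) \to T_3(k_v)$ is an open map. In particular, $\pi(T_1(k_v))$ contains an open neighbourhood of $1 \in T_3(k_v)$, so being a subgroup it is an open subgroup of $T_3(k_v)$. Every open subgroup of a topological group is also closed, as its complement is a disjoint union of open cosets. Combined with the first step, this gives that $\psi(T_1(k_v))$ is closed in $T_2(k_v)$.

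The argument is short and I do not anticipate a serious obstacle. The only points that deserve care are the identification of the scheme-theoretic image of $\psi$ as a subtorus of $T_2$, so that $\iota$ is genuinely a closed immersion of $k$-schemes, and the openness of $\pi$ on $k_v$-points, where the characteristic zero hypothesis is used implicitly. No information on Galois cohomology of $\ker(\psi)$ is needed for this statement; such input would only be relevant for a quantitative refinement describing the cokernel of $\pi$ on $k_v$-points, which is not required here.
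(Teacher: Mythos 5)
Your proof is correct and follows essentially the same route as the paper: both factor $\psi$ through the image subtorus (your $T_3 = T_1/\ker\psi$ is the paper's $T = \mathrm{im}\,\psi$), deduce that the image of $T_1(k_v)$ is an open, hence closed, subgroup there, and then use that a closed subtorus has closed $k_v$-points in $T_2(k_v)$. The only difference is cosmetic --- the paper cites Platonov--Rapinchuk (Chapter 3, Corollary 1) for the openness of the image on $k_v$-points, while you unwind the underlying implicit-function-theorem argument.
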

\begin{proof} Let $T$ be the image of $\psi$. For any $v\in \Omega_k$, one has that $\psi(T_1(k_v))$ is an open subgroup of $T(k_v)$ by corollary 1 of Chapter 3 in \cite{PR}. Therefore $\psi(T_1(k_v))$ is closed in $T(k_v)$. It is clear that $T(k_v)$ is closed in $T_2(k_v)$. One concludes that $\psi(T_1(k_v))$ is closed in $T_2(k_v)$.
\end{proof}

\begin{prop}\label{top} With the same notation as that in Lemma \ref{image}, one has
$$\psi(T_1(\mathbf A_k))= (\prod_{v\in \Omega_k} \psi(T_1(k_v))) \cap T_2(\mathbf A_k) \subseteq \prod_{v\in \Omega_k} T_2(k_v). $$
 In particular, $\psi(T_1(\mathbf A_k))$ is closed in $T_2(\mathbf A_k)$.
\end{prop}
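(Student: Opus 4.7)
The plan is to establish the set equality first and then deduce closedness from it. The inclusion $\psi(T_1(\mathbf A_k)) \subseteq \bigl(\prod_v \psi(T_1(k_v))\bigr) \cap T_2(\mathbf A_k)$ is immediate, so the content is the reverse inclusion.

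First I would reduce to the case when $\psi$ is surjective. Factor $\psi$ as $T_1 \twoheadrightarrow T \hookrightarrow T_2$, where $T$ is the image subtorus. Given $(y_v) \in T_2(\mathbf A_k)$ with $y_v \in T(k_v)$ for every $v$, fix an integral model $\mathbf T_2$ of $T_2$ over some $O_{k,S}$ and let $\mathbf T$ be the schematic closure of $T$ in $\mathbf T_2$, which is an integral model of $T$. Since $\mathbf T \hookrightarrow \mathbf T_2$ is a closed immersion and $O_v$ is a domain, the identity $\mathbf T(O_v) = \mathbf T_2(O_v) \cap T(k_v)$ holds for all $v \notin S$. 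Because $y_v \in \mathbf T_2(O_v)$ for almost all $v$, this gives $(y_v) \in T(\mathbf A_k)$, reducing the problem to the case $\psi: T_1 \twoheadrightarrow T$.

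Next, assume $\psi$ surjective with kernel $K$, a $k$-group of multiplicative type (smooth since $\car(k) = 0$). Enlarging $S$, I would spread out to a short exact sequence $1 \to \mathbf K \to \mathbf T_1 \to \mathbf T_2 \to 1$ of smooth commutative group schemes of multiplicative type over $O_{k,S}$, exact as étale sheaves, with $\mathbf T_1 \to \mathbf T_2$ smooth and surjective. Given $(y_v)$ in the right-hand side of the claim, for $v \in S$ choose any lift $x_v \in T_1(k_v)$ of $y_v$. For $v \notin S$ one has $y_v \in \mathbf T_2(O_v)$, and the obstruction to lifting $y_v$ to $\mathbf T_1(O_v)$ lives in $H^1_{\et}(O_v, \mathbf K)$. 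This obstruction group injects into $H^1(k_v, K)$ via restriction to the generic fiber (its image being the unramified classes, since $\mathbf K$ is smooth over the Henselian ring $O_v$), and by hypothesis the image of $y_v$ in $H^1(k_v, K)$ vanishes. Hence $y_v$ lifts to some $x_v \in \mathbf T_1(O_v)$, and the resulting $(x_v) \in T_1(\mathbf A_k)$ satisfies $\psi((x_v)) = (y_v)$.

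Closedness is then formal: by Lemma \ref{image} each $\psi(T_1(k_v))$ is closed in $T_2(k_v)$, hence $\prod_v \psi(T_1(k_v))$ is closed in $\prod_v T_2(k_v)$ with the product topology; since the restricted-product topology on $T_2(\mathbf A_k)$ is finer than the subspace topology inherited from $\prod_v T_2(k_v)$, the intersection $\prod_v \psi(T_1(k_v)) \cap T_2(\mathbf A_k)$, which by the equality just proved equals $\psi(T_1(\mathbf A_k))$, is closed in $T_2(\mathbf A_k)$. The main technical obstacle is the spreading-out step in paragraph three: one must verify both that for almost all $v$ the integral sequence is exact as étale sheaves on $\Spec O_v$ and that $H^1_{\et}(O_v, \mathbf K)$ injects into $H^1(k_v, K)$; the rest of the argument is formal manipulation with integral models.
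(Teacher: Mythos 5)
Your proposal is correct and follows essentially the same route as the paper's proof. Both arguments reduce to the case of a surjective $\psi$ by factoring through the image torus $T$ (you do this reduction first, the paper does the surjective case first and then reduces), and both treat the surjective case by spreading out the exact sequence $1 \to T_0 \to T_1 \to T_2 \to 1$ to integral models over $O_{k,S}$ and invoking the injectivity of $H^1(O_v, \mathbf{T}_0) \to H^1(k_v, T_0)$ to lift local integral points; the paper cites Colliot-Th\'el\`ene and Sansuc for this injectivity where you give the unramified-classes/smoothness rationale, and you make explicit the schematic-closure and topology comparisons that the paper leaves implicit, but the core argument is identical.
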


\begin{proof}  If $\psi$ is surjective, one has the short exact sequence of groups of multiplicative type
$$ 1\rightarrow T_0 \rightarrow T_1 \xrightarrow{\psi} T_2 \rightarrow 1 $$ with $T_0=ker \psi$. There is a finite subset $S$ of $\Omega_k$ containing $\infty_k$ such that the above short exact sequence extends to
$$  1\rightarrow {\bf T}_0 \rightarrow  {\bf T}_1 \xrightarrow{\psi_S} {\bf T}_2 \rightarrow 1$$  over $O_{k,S}$, where ${\bf T}_0$, ${\bf T}_1$ and ${\bf T}_2$ are integral models of $T_0$, $T_1$ and $T_2$ over $O_{k,S}$ respectively. For $v\not\in S$, this yields exact sequences:
\[ \begin{CD} 1 @>>>  {\bf T}_0(O_v) @>>> {\bf T}_1(O_v) @>{\psi_S}>> {\bf T}_2(O_v) @>>> H^1(O_v, {\bf T}_0)  \\
                @. @VVV   @VVV   @VVV  @VVV \\
       1 @>>> T_0(k_v) @>>> T_1(k_v) @>>{\psi}> T_2(k_v) @>>> H^1(k_v, T_0)
       \end{CD} .\]
 By Proposition 2.2 in \cite{CTS}, the natural map $H^1(O_v, {\bf T}_0) \rightarrow  H^1(k_v, T_0)$ is injective.
Then
 $$\psi(T_1(k_v))\cap {\bf T}_2(O_v) = \psi_S ({\bf T}_1(O_v)) $$ for all $v \not\in S$. Therefore
$$\psi(T_1(\mathbf A_k))= (\prod_{v\in \Omega_k} \psi(T_1(k_v))) \cap T_2(\mathbf A_k) . $$

In general, there is a closed sub-torus $T$ of $T_2$ such that $\psi$ factors through the surjective homomorphism $T_1 \rightarrow T$. By the above arguments, one has $$\psi(T_1(\mathbf A_k))= (\prod_{v\in \Omega_k} \psi(T_1(k_v)))\cap T(\mathbf A_k) $$ and $\psi(T_1(\mathbf A_k))$ is closed in $T(\mathbf A_k)$. Since $T$ is a closed sub-torus of $T_2$, one has $$T(\mathbf A_k)= (\prod_{v\in \Omega_k} T(k_v)) \cap T_2(\mathbf A_k) $$ and $T(\mathbf A_k)$ is a closed subset of $T_2(\mathbf A_k)$. Therefore one concludes that $$ \psi(T_1(\mathbf A_k))= (\prod_{v\in \Omega_k} \psi(T_1(k_v)))\cap T_2(\mathbf A_k) $$ and $\psi(T_1(\mathbf A_k))$ is closed in $T_2(\mathbf A_k)$ by Lemma \ref{image}. \end{proof}

By the functoriality of \'etale cohomology, one obtains an induced group homomorphism $$ \psi_{\Br}^*: \ \ \Br_a (T_2) \longrightarrow \Br_a(T_1) $$ for any homomorphism $\psi: T_1\rightarrow T_2$ of tori. For each $v\in \infty_k$, since the map $\psi$ maps each connected component of $T_1(k_v)$ into one connected component of $T_2(k_v)$, one has
$$\psi(T_1(\mathbf A_k)_{\bullet}) \subseteq T_2(\mathbf A_k)_{\bullet}^{ker(\psi_{\Br}^*)}  $$  by the functoriality of Brauer-Manin pairing (see Page 102, (5.3) in \cite{Sko}). One can extend strong approximation for tori proved by Harari in \cite{Ha08} to the following relative strong approximation for tori.

\begin{prop}\label{relative} Let $\psi: T_1\rightarrow T_2$ be a homomorphism of tori with $\Sha^1(T_1) =0 $. Then the image of $T_2(k)$ is dense in
$$ T_2(\mathbf A_k)_\bullet ^{ker(\psi_{\Br}^*)}/\psi(T_1(\mathbf A_k)_\bullet) $$ with the quotient topology.
\end{prop}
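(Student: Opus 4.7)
The plan is to combine Harari's strong approximation theorem for the torus $T_2$ with Poitou--Tate duality for $T_1$, so as to reduce the relative statement to the absolute one.

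Fix $(x_v)\in T_2(\mathbf A_k)_\bullet^{\ker(\psi_{\Br}^*)}$ and consider the homomorphism
\[
\lambda:\Br_a(T_2)\longrightarrow \Q/\Z,\qquad \xi\longmapsto \sum_{v\in\Omega_k}\inv_v(\xi(x_v)).
\]
The orthogonality hypothesis says that $\lambda$ vanishes on $\ker(\psi_{\Br}^*)$, so $\lambda$ factors through a homomorphism $\tilde\lambda:\im(\psi_{\Br}^*)\to\Q/\Z$. Since $\Q/\Z$ is divisible, and hence injective as an abstract abelian group, $\tilde\lambda$ extends to a homomorphism $\mu:\Br_a(T_1)\to\Q/\Z$; continuity for the discrete topology on the torsion group $\Br_a(T_1)$ is automatic.

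Next I would invoke Poitou--Tate duality for $T_1$: the hypothesis $\Sha^1(T_1)=0$ makes the Brauer--Manin pairing induce a surjection $T_1(\mathbf A_k)_\bullet \twoheadrightarrow \Hom(\Br_a(T_1),\Q/\Z)$, so there is $(y_v)\in T_1(\mathbf A_k)_\bullet$ realizing $\mu$. Functoriality and biadditivity of the pairing then give, for every $\xi\in\Br_a(T_2)$,
\[
\sum_{v\in\Omega_k}\inv_v\bigl(\xi(x_v\cdot\psi(y_v)^{-1})\bigr) \;=\; \lambda(\xi)-\mu(\psi_{\Br}^*\xi) \;=\; 0,
\]
so the adelic point $(x_v)\cdot\psi(y_v)^{-1}$ lies in $T_2(\mathbf A_k)_\bullet^{\Br_a(T_2)}$. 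Harari's strong approximation theorem applied to $T_2$ then produces $t\in T_2(k)$ arbitrarily close to $(x_v)\cdot\psi(y_v)^{-1}$, and hence $t\cdot\psi(y_v)$ approximates $(x_v)$ modulo $\psi(T_1(\mathbf A_k)_\bullet)$. The closedness of $\psi(T_1(\mathbf A_k)_\bullet)$ in $T_2(\mathbf A_k)_\bullet$ (inherited from Proposition \ref{top}) ensures that the quotient topology is Hausdorff and behaves as expected.

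The decisive step, and the point at which the hypothesis $\Sha^1(T_1)=0$ enters, is the Poitou--Tate surjectivity used to realize $\mu$ by an adelic point of $T_1$. Without it one cannot in general modify $(x_v)$ by an element of $\psi(T_1(\mathbf A_k)_\bullet)$ into the Brauer--Manin kernel of $T_2$, and the reduction to Harari's theorem would break down. By contrast the extension of $\tilde\lambda$ to $\mu$ via injectivity of $\Q/\Z$ is routine, as is the compatibility with the $_\bullet$-convention at archimedean places, since each element of $\Br_a$ is constant on the connected components of $T_2(k_v)$ for $v\in\infty_k$; the only genuine subtlety lies in the topological realization step.
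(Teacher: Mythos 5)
Your argument is correct and is essentially the paper's proof with the snake lemma unfolded into an explicit element chase: the extension of $\tilde\lambda$ to $\mu$ via injectivity of $\Q/\Z$ matches the paper's use of exactness of $\Hom(-,\Q/\Z)$, the realization of $\mu$ by an adelic point is the surjectivity of $T_1(\mathbf A_k)_\bullet\to\Br_a(T_1)^D$ coming from Harari's exact sequence with $\Sha^1(T_1)=0$, and the final step is Harari's theorem for $T_2$ together with Proposition \ref{top}. Same ingredients, same structure, just presented pointwise rather than via a commutative diagram.
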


\begin{proof}
By Theorem 2 in \cite{Ha08} and functoriality, one has the following commutative diagram of exact sequences
\[ \begin{CD} 0 @>>> \overline{T_1(k)} @>>> T_1(\mathbf A_k)_\bullet @>>> \Br_a(T_1)^D @>>> \Sha^1(T_1) =0  \\
 @. @VVV   @VVV @VVV @. \\
0 @>>> \overline{ T_2(k) } @>>> T_2(\mathbf A_k)_\bullet @>>> \Br_a(T_2)^D @.
\end{CD} \]
where $\overline{T_1(k)}$ and $\overline{ T_2(k)}$ are the topological closure of $T_1(k)$ and $T_2(k)$ in $T_1(\mathbf A_k)_\bullet$ and $T_2(\mathbf A_k)_\bullet$ respectively and  $$\Br_a(T_i)^D= \Hom (\Br_a(T_i), \Bbb Q/\Bbb Z)$$ for $i=1, 2$. Since $\Bbb Q/\Bbb Z$ is an injective $\Bbb Z$-module, one has $\Hom (*, \Bbb Q/\Bbb Z)$ is an exact functor and the sequence
$$ \Br_a(T_1)^D \rightarrow \Br_a(T_2)^D \rightarrow ker(\psi_{\Br}^*)^D\rightarrow 0 $$ is exact. Therefore the natural map
$$\overline{T_2(k)} \rightarrow T_2(\mathbf A_k)_\bullet ^{ker(\psi_{\Br}^*)}/\psi(T_1(\mathbf A_k)_\bullet)$$ is surjective by the snake lemma. Since the topological closure of the image of $T_2(k)$ in
$$T_2(\mathbf A_k)_\bullet ^{ker(\psi_{\Br}^*)}/\psi(T_1(\mathbf A_k)_\bullet)$$ with the quotient topology contains the image of $\overline{T_2(k)}$ by Proposition \ref{top}, one obtains the result as desired.
\end{proof}

\begin{rem}\label{relarem} One can state Proposition \ref{relative} in the following equivalent version for better understanding of relative strong approximation.

If $$ [(\prod_{v\in \infty_k} a_v N_{\Bbb C/k_v}(T_2(\Bbb C))) \times U]\cap T_2(\mathbf A_k)^{ker(\psi_{\Br}^*)} \neq \emptyset , $$
for an open subset $U$ of $T_2(\mathbf A_k^\infty)$ and $a_v\in T(k_v)$ with $v\in \infty_k$, then there are $x\in T_2(k)$ and $y\in T_1(\mathbf A_k)$ such that $$x\psi(y)\in (\prod_{v\in \infty_k} a_v N_{\Bbb C/k_v}(T_2(\Bbb C))) \times U . $$
\end{rem}

\bigskip

In order to prove our main result, we need the following useful result.

\begin{prop}\label{cdim}
Let $S$ be a finite nonempty subset of $\Omega_k$, and $U$ an open subscheme of $\Bbb A^n$ with $\codim(\Bbb A^n\setminus U, \Bbb A^n)\geq 2$. Then $U$ satisfies strong approximation off $S$.
\end{prop}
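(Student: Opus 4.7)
The plan is to bootstrap from strong approximation for $\mathbb{A}^n=\mathbb{G}_a^n$ off $S$ (standard for any nonempty $S$, since $\mathbb{G}_a$ satisfies strong approximation off any nonempty set of places) and use the codimension-$\geq 2$ hypothesis to refine a rational affine approximation into one lying in the integral model of $U$ at every place outside $S$.

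First I would set up the data. Write $F:=\mathbb{A}^n\setminus U$ and let $\mathbf{U},\mathbf{F}\subset\mathbb{A}^n_{O_{k,S}}$ be the natural integral models. Fix $(x_v)\in U(\mathbf{A}_k^S)$ and a basic open neighbourhood
\[
W=\prod_{v\in T}W_v\times\prod_{v\notin T\cup S}\mathbf{U}(O_v),
\]
with $T\subset\Omega_k\setminus S$ finite and each $W_v\subseteq U(k_v)$ open; enlarging $T$ if necessary, we may assume the fibres $\mathbf{F}_v\subset\mathbb{A}^n_{k(v)}$ are of codimension $\geq 2$ for every $v\notin T\cup S$. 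The goal is $y\in U(k)\cap W$. Applying strong approximation for $\mathbb{A}^n$ off $S$ to the larger open $\prod_{v\in T}W_v\times\prod_{v\notin T\cup S}O_v^n$ yields $y_0\in k^n$ with $y_0\in W_v$ for $v\in T$ and $y_0\in O_v^n$ for $v\notin T\cup S$. Since $W_v\subseteq U(k_v)$ at any $v\in T$ forces $y_0\notin F(k_v)\supseteq F(k)$, we have $y_0\in U(k)$ automatically. Writing $F=V(f_1,\ldots,f_m)$ and fixing $j$ with $f_j(y_0)\neq 0$, the ``bad set''
\[
B:=\bigl\{v\notin T\cup S:\bar y_{0,v}\in\mathbf{F}_v(k(v))\bigr\}\subseteq\{v:v(f_j(y_0))\geq 1\}
\]
is finite, measuring the failure of $y_0\in\mathbf{U}(O_v)$ at places outside $T\cup S$.

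The heart of the proof is a moving lemma correcting $y_0$ everywhere at once. Let $\Lambda\subset k^n$ denote the $O_{k,S}$-submodule of $\eta$ such that $y_0+\eta$ remains in the larger $\mathbb{A}^n$-neighbourhood, i.e.\ $\eta\in\mathfrak{p}_v^{N_v}O_v^n$ for $v\in T$ and $\eta\in O_v^n$ for $v\notin T\cup S$; this $\Lambda$ is a lattice of rank $n[k:\mathbb{Q}]$. For each $v\notin T\cup S$, the ``local bad'' sublattice
\[
\Lambda_v:=\bigl\{\eta\in\Lambda:\bar y_{0,v}+\bar\eta_v\in\mathbf{F}_v(k(v))\bigr\}
\]
is the preimage, under the surjective reduction $\Lambda\twoheadrightarrow k(v)^n$, of a set of cardinality $\leq C|k(v)|^{n-2}$ by Lang--Weil; hence $\Lambda_v$ has index $\geq|k(v)|^2/C$ in $\Lambda$. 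A standard sieve / lattice-point counting argument, relying on the convergence $\sum_{v\notin T\cup S}|k(v)|^{-2}<\infty$ (which after a further enlargement of $T$ absorbing small-residue-field places gives total bad density strictly less than $1$), produces $\eta\in\Lambda\setminus\bigcup_{v\notin T\cup S}\Lambda_v$. Then $y:=y_0+\eta$ satisfies $y\in W_v$ at $v\in T$ and $\bar y_v\notin\mathbf{F}_v(k(v))$ for every $v\notin T\cup S$, so $y\in U(k)\cap W$, as required.

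The hardest step will be this moving lemma, and the codimension-$\geq 2$ hypothesis is precisely what makes it go through: it is what forces the density $|\mathbf{F}_v(k(v))|/|k(v)|^n$ to be $O(|k(v)|^{-2})$, producing the summable series on which the sieve depends. In codimension $1$ the corresponding density is $\Theta(|k(v)|^{-1})$, the series diverges, the sieve collapses, and strong approximation can genuinely fail — consistent with Example \ref{contre} in the next section, where strong approximation off $\infty_k$ breaks down for a codimension-$1$ complement.
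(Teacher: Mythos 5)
Your strategy is genuinely different from the paper's. The paper projects $\mathbb{A}^n\to\mathbb{A}^1$ and argues by induction on $n$ using the fibration criterion (Proposition 3.1 of \cite{CTX1}): for all but finitely many fibres the complement again has codimension $\geq 2$ in $\mathbb{A}^{n-1}$, and every fibre has $k_v$-points for all $v$. You instead start from a rational point furnished by strong approximation on $\mathbb{A}^n$ itself and try to perturb it by an element of an $O_{k,S}$-lattice so that it lands in the integral model of $U$ at every place outside $T\cup S$ simultaneously.

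The gap is in the sieve step, which you rightly flag as the hardest but then dispatch too quickly. You enlarge $T$ so that $\sum_{v\notin T\cup S}C|k(v)|^{-2}<1$ and then assert that a ``standard sieve / lattice-point counting argument'' produces $\eta\in\Lambda\setminus\bigcup_v\Lambda_v$. Over an infinite lattice this ``sum of densities $<1$'' reasoning does not close. Counting in a box of side $N$, the local estimate that the bad set at $v$ occupies a proportion $\approx C|k(v)|^{-2}$ of the box is reliable only when $|k(v)|$ is small compared with $N$; for the infinitely many places with $|k(v)|$ comparable to or larger than $N$, reduction from the box to $k(v)^n$ is nowhere near uniform and those places need a separate estimate. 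That estimate is exactly Ekedahl's geometric sieve: for a closed $Z\subset\mathbb{A}^n_{O_{k,S}}$ of codimension $\geq 2$, the proportion of lattice points in a box that reduce into $Z$ modulo some $v$ with $|k(v)|>M$ tends to $0$ as $M\to\infty$. This is a genuine theorem, not routine counting, and it must be invoked (or reproved) for your argument to be complete. With it in place your route does work and is a legitimate, more hands-on alternative to the paper's reliance on the fibration lemma of \cite{CTX1}; but as written the key step is unjustified. Two smaller points: $\Lambda_v$ is a union of cosets of $\ker(\Lambda\to k(v)^n)$, not a sublattice, so ``index'' should read ``density''; and if $S$ contains finite places or omits some archimedean ones, $\Lambda$ is an $O_{k,S}$-lattice rather than a $\mathbb{Z}$-lattice of rank $n[k:\mathbb{Q}]$, so the box-counting must be set up $S$-adically.
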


\begin{proof} Since the projection $$p: \ \Bbb A^n \rightarrow \Bbb A^1;  \ \ \ (x_1,...x_n)\mapsto x_1  $$ with
 $ p^{-1}(x)\cong \Bbb A^{n-1}$ over $k$, one has
  $$  \sharp  \{ x\in \Bbb A^1(k):   \dim( p^{-1}(x)\cap Z)= \dim(Z) \} < \infty $$ with $Z=\Bbb A^n \setminus U$.
  Thus for almost all $x\in \Bbb A^1(k)$, $\codim(p^{-1}(x)\cap Z, p^{-1}(x))\geq 2$, and one obtains that $p^{-1}(x)\cap U$ satisfies strong approximation off $S$ by induction.

  For any $x\in \Bbb A^1(\bar k)$,
  $$p^{-1}(x)\cap U= p^{-1}(x) \setminus (p^{-1}(x)\cap Z) \neq \emptyset  $$
  and geometrically integral since $\dim(p^{-1}(x))> \dim(p^{-1}(x)\cap Z)$. Since $p^{-1}(x)(k_v)$ is Zariski dense in $p^{-1}(x)$ for any $x\in \Bbb A^1(k_v)$ (see Theorem 2.2 in Chapter 2 of \cite{PR}), one has
  $$ (p^{-1}(x)\cap U)(k_v) = p^{-1}(x)(k_v) \setminus (p^{-1}(x)\cap Z)(k_v) \neq \emptyset $$
  for any $v$. This implies that condition (iii) of Proposition 3.1 in \cite{CTX1} is satisfied. The result follows from Proposition 3.1 in \cite{CTX1}.
\end{proof}

\begin{cor} \label{v} Let $d$ be a positive integer, $S$ a finite nonempty subset of $\Omega_k$, and $k_i/k$ some finite field extensions for $1\leq i\leq d$. We note $K:=\prod_{i=1}^d k_i$. Then the standard toric variety $(\Res_{K/k}(\Bbb G_m)\hookrightarrow X)$ satisfies strong approximation off $S$.
\end{cor}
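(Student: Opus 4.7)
The proof plan is essentially to reduce the statement to Proposition \ref{cdim} by forgetting the toric structure and using only the underlying $k$-scheme of $\Res_{K/k}(\Bbb A^1)$.

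First, I would observe that the Weil restriction $\Res_{K/k}(\Bbb A^1) = \prod_{i=1}^d \Res_{k_i/k}(\Bbb A^1)$ is isomorphic, as a $k$-scheme, to affine space $\Bbb A^N$ with $N = [K:k] = \sum_{i=1}^d [k_i:k]$. Concretely, choosing a $k$-basis of each $k_i$ identifies $\Res_{k_i/k}(\Bbb A^1)$ with $\Bbb A^{[k_i:k]}$, and taking the product yields $\Bbb A^N$. This identification ignores the torus action (which is not what strong approximation sees anyway).

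Next, by Definition \ref{standard}, the standard toric variety $X$ is an open subscheme of $\Res_{K/k}(\Bbb A^1)$ satisfying
\[
\codim\bigl(\Res_{K/k}(\Bbb A^1) \setminus X,\ \Res_{K/k}(\Bbb A^1)\bigr) \geq 2.
\]
Transporting this inclusion along the isomorphism from the previous step, $X$ becomes an open subscheme of $\Bbb A^N$ whose complement has codimension at least $2$ in $\Bbb A^N$.

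Finally, since $S$ is a finite nonempty subset of $\Omega_k$, Proposition \ref{cdim} applies verbatim to $X \subset \Bbb A^N$, and yields that $X$ satisfies strong approximation off $S$. There is no genuine obstacle here; the content of the corollary is really just the observation that the toric structure on $\Res_{K/k}(\Bbb A^1)$ is irrelevant for strong approximation, so the previous proposition about open subsets of affine space applies directly. The only hypothesis worth flagging is the nonemptiness of $S$, which is inherited from Proposition \ref{cdim} and is indispensable for its inductive fibration argument.
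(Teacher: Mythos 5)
Your proof is correct and follows essentially the same route as the paper: identify $\Res_{K/k}(\Bbb A^1)$ with $\Bbb A^N$ where $N=\sum_{i=1}^d [k_i:k]$, note that by Definition \ref{standard} the standard toric variety is an open subscheme with complement of codimension at least $2$, and apply Proposition \ref{cdim}. Nothing to add.
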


\begin{proof} There exists an isomorphism $\Res_{K/k}(\Bbb A^1)\stackrel{\sim}{\rightarrow}\Bbb A^{\sum_{i=1}^d [k_i:k]}$. The result holds from Proposition \ref{cdim}. \end{proof}

 \section{Proof of main theorem}

In this section, we keep the same notation as in the previous sections. Let $(T\hookrightarrow X)$ be a smooth toric variety of pure divisorial type over $k$.

$\bullet$ Fix integral models $\bf X$, $\bf T$, ${\bf C}_i$, ${\bf U}_i$ of $X$, $T$, $C_i$, $U_i$ in (\ref{c}) and ${\bf V}_i$ of $V_i$ in Lemma \ref{ext} and $\bf V$ of $V$ in Proposition \ref{rho} over $O_k$ for $1\leq i\leq d$ respectively.

$\bullet$ Fix integral models $\sigma({\bf Z}_i)$ of $\sigma(Z_i)$ in (\ref{Z}) and ${\bf T}_\sigma$ of $T_\sigma$ in (\ref{phi}) over $O_{\sigma(k_i)}$ for $1\leq i\leq d$ and $\sigma\in \Upsilon_i$, where $\Upsilon_i$ is the set of all $k$-embedding of $k_i$ into $\bar k$.

\medskip

Choose a sufficiently large finite subset $S\supset \infty_k$ in $\Omega_k$  such that

i) The action $m_X$ of $T$ on $X$ as toric variety extends to $$m_T: ({\bf T}\times_{O_k} O_{k,S}) \times_{O_{k,S}} ({\bf X}\times_{O_k} O_{k,S}) \rightarrow {\bf X}\times_{O_k} O_{k,S}  $$ as an action of group scheme.

ii)For $1\leq i\leq d$, $\{ {\bf U}_i\times_{O_k} O_{k,S} \}_{i=1}^d $ is an open covering of ${\bf X}\times_{O_k} O_{k,S}$ and ${\bf U}_i\times_{O_k} O_{k,S}$ is covered by
$$\xymatrix{{\bf T}\times_{O_k} O_{k,S}  \ar[r]^j  &{\bf U}_i \times_{O_k} O_{k,S}& {\bf C}_i \times_{O_k} O_{k,S}\ar[l]^l}  $$
over $O_{k,S}$, where $j$ is an open immersion and $l$ is the complement of $j$, which is a closed immersion. Moreover, ${\bf C}_i\times_{O_k} O_{k,S}$ is smooth over $O_{k,S}$ for $1\leq i\leq d$.  \medskip

Let $O_{k_i,S}$ and $O_{\sigma(k_i),S}$ be the integral closures of $O_{k,S}$ inside $k_i$ and $\sigma(k_i)$ respectively for $\sigma \in \Upsilon_i$ and $1\leq i\leq d$. \medskip

iii) The morphism $\rho$ in Proposition \ref{rho} extends to $\rho: {\bf V}\times_{O_k} O_{k,S} \rightarrow {\bf X}\times_{O_k} O_{k,S}$ and
$$\{\prod_{1\leq j\leq i-1}  \Res_{O_{k_j,S}/O_{k,S}}(\Bbb G_m)\times_{O_{k,S}} ({\bf V}_i\times_{O_k} O_{k,S}) \times_{O_{k,S}} \prod_{i+1\leq j\leq d}  \Res_{O_{k_j,S}/O_{k,S}}(\Bbb G_m)\}_{1\leq i\leq d} $$ is an open covering of ${\bf V}\times_{O_k} O_{k,S}$. \medskip

iv) Both morphism $\varrho_\sigma$ in (\ref{h}) and morphism $\phi_\sigma$ in (\ref{phi}) extend to
$$ \varrho_\sigma: \ \Bbb A_{O_{\sigma(k_i),S}}^1 \rightarrow \sigma({\bf Z}_i) \times_{O_{\sigma(k_i)}} O_{\sigma(k_i), S} \ \ \ \text{and} \ \ \ \phi_\sigma: \ \sigma({\bf Z}_i) \times_{O_{\sigma(k_i)}} O_{\sigma(k_i), S} \rightarrow {\bf T}_\sigma \times_{O_{\sigma(k_i)}} O_{\sigma(k_i), S} $$
over $O_{\sigma(k_i),S}$ for all $\sigma\in \Upsilon_i$ and $1\leq i\leq d$. Moreover, the exact sequence in (\ref{texact}) extends to
$$ 1\rightarrow \Bbb G_{m,O_{\sigma(k_i),S}}  \rightarrow {\bf T}\times_{O_k} O_{\sigma(k_i),S} \rightarrow {\bf T}_\sigma \times_{O_{\sigma(k_i)}} O_{\sigma(k_i), S}  \rightarrow 1$$ over $O_{\sigma(k_i), S}$ and $Im(\varrho_\sigma)=\phi_\sigma^{-1}( {\bf 1}_{{\bf T}_\sigma \times_{O_{\sigma(k_i)}} O_{\sigma(k_i), S} }) )$ over $O_{\sigma(k_i), S}$ for $1\leq i\leq d$ and all $\sigma\in \Upsilon_i$. \medskip

Let $O_{\bar{k},S}$ be the integral closure of $O_{k,S}$ inside $\bar k$. \medskip

v) $${\bf C}_i \times_{O_k} O_{\bar{k},S} = \coprod_{\sigma \in \Upsilon_i} ((\sigma({\bf Z}_i)\setminus {\bf T}) \times_{O_{\sigma(k_i)}} O_{\bar{k}, S}) $$ and $(\sigma({\bf Z}_i)\setminus {\bf T}) \times_{O_{\sigma(k_i)}} O_{\bar{k}, S}$ is integral for $1\leq i\leq d$.   \medskip

vi) The morphism $\rho_i$ in Lemma \ref{ext} extends to the following commutative diagram
\[ \begin{CD}      \Res_{O_{k_i,S}/O_{k,S}}(\Bbb G_m)  @>{\rho_i}>> {\bf T}\times_{O_k} O_{k,S}   \\
 @VVV       @VVV \\
  {\bf V}_i\times_{O_k} O_{k,S} @>>{\rho_i}> {\bf U}_i \times_{O_k} O_{k,S}
\end{CD} \]
over $O_{k,S}$ and $\{  Spec(O_{\bar{k},S}[x_\sigma, x_\tau, x_\tau^{-1}]_{\tau\in \Upsilon_i; \ \tau\neq \sigma})\}_{\sigma\in \Upsilon_i}$ is an open covering of ${\bf V}_i\times_{O_k} O_{\bar{k},S}$ for $1\leq i\leq d$.

The following proposition is crucial for proving our main theorem.

\begin{prop}\label{int} With notation as above, one has
$${\bf X}(O_v)\cap T(k_v) ={\bf T}(O_v)\cdot \rho ({\bf V}(O_v)\cap T_0(k_v))\subseteq X(k_v)$$ for all $v \not\in S$, where $T_0=  \prod_{i=1}^d \Res_{k_i/k}(\Bbb G_m)$.
\end{prop}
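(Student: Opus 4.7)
The inclusion $\supseteq$ is immediate: since $\rho:{\bf V}\to{\bf X}$, the open embedding $T_0\subset V$, and the ${\bf T}$-action on ${\bf X}$ all extend over $O_{k,S}$, any product $t\cdot\rho(y)$ with $t\in{\bf T}(O_v)$ and $y\in{\bf V}(O_v)\cap T_0(k_v)$ automatically lies in ${\bf X}(O_v)\cap T(k_v)$.

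For the reverse inclusion, take $x\in{\bf X}(O_v)\cap T(k_v)$ and analyse its reduction $\bar x\in{\bf X}(\kappa_v)$. Pure divisorial type ensures the special fibre stratifies into the open torus and the pairwise disjoint closed divisors ${\bf C}_i\times\kappa_v$. If $\bar x\in{\bf T}(\kappa_v)$, then $x\in{\bf T}(O_v)$ and one takes $y=1$. Otherwise $\bar x\in{\bf C}_i(\kappa_v)$ for a unique $i$ and $x\in{\bf U}_i(O_v)$; by property v), ${\bf C}_i\times_{O_k}O_{\bar k,S}$ is the disjoint union over $\sigma\in\Upsilon_i$ of integral pieces $(\sigma({\bf Z}_i)\setminus{\bf T})\times O_{\bar k,S}$, so $\bar x$ picks out a single $\Gamma_v$-orbit. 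Choose $\sigma$ in this orbit together with a place $v'$ of $\sigma(k_i)$ above $v$ for which $x_{v'}\in\sigma({\bf Z}_i)(O_{v'})$.

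Apply the extended morphism $\phi_\sigma:\sigma({\bf Z}_i)\to{\bf T}_\sigma$ of property iv) to obtain $u:=\phi_\sigma(x_{v'})\in{\bf T}_\sigma(O_{v'})$, and lift $u$ to $t'\in{\bf T}(O_{v'})$ via the integral exact sequence $1\to\Bbb G_m\to{\bf T}_{O_{\sigma(k_i),S}}\to{\bf T}_\sigma\to 1$ combined with $H^1_{\text{\'et}}(O_{v'},\Bbb G_m)=\Pic(O_{v'})=0$. Using $\phi_\sigma^{-1}(1)=\varrho_\sigma(\Bbb A^1)$ and the ${\bf T}$-equivariance of $\phi_\sigma$, this gives $x_{v'}\cdot(t')^{-1}=\varrho_\sigma(\alpha)$ for a unique $\alpha\in O_{v'}$, with $\alpha\in O_{v'}\setminus 0$ because $x_{v'}\in T(k_{v'})$.

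Finally, I assemble $\alpha$ and its Galois translates into an element $y\in{\bf V}(O_v)\cap T_0(k_v)$, using the chart decomposition from Lemma \ref{prod} and the component maps $\rho_i$ from Lemma \ref{ext}; the codimension-$\geq 2$ condition built into the definition of the standard toric variety ${\bf V}_i$ is precisely what allows the construction to stay inside ${\bf V}$. The main technical obstacle is the verification that $x\cdot\rho(y)^{-1}\in{\bf T}(O_v)$: one must check that the valuations at every character $\chi\in T^*$ match, combining the local factorisation $x_{v'}=t'\varrho_\sigma(\alpha)$ with the explicit formula (\ref{mult}) for $\rho_i$, the Galois-equivariance forced by $x\in T(k_v)$, and the divisor-map description of $\rho^*$ from Proposition \ref{rho}.
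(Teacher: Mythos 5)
Your overall route matches the paper's: reduce to a point $\alpha\in T(k_v)$ whose reduction lands in some ${\bf C}_i$, locate the relevant piece $\sigma({\bf Z}_i)$, use the extended quotient map $\phi_\sigma$ and the vanishing of $H^1_\etale(O_v,\Bbb G_m)$ to translate $\alpha$ into $\phi_\sigma^{-1}(1)=\varrho_\sigma(\Bbb A^1)$, and then package the resulting coordinate $\gamma$ into a point of ${\bf V}(O_v)\cap T_0(k_v)$.

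However, there is a genuine gap at the pivotal step. You write that ``$\bar x$ picks out a single $\Gamma_v$-orbit'' and then ``choose $\sigma$ in this orbit together with a place $v'$ of $\sigma(k_i)$ above $v$.'' This is not enough: the construction at the end only produces an element of $T_0(k_v)$ (not merely $T_0(\bar k_v)$) because the chosen $\sigma$ is \emph{fixed} by $\Gamma_{k_v}$, i.e.\ because $\sigma(k_i)\subseteq k_v$ and the corresponding place $v'$ satisfies $\sigma(k_i)_{v'}=k_v$. The paper proves this by using the smoothness of ${\bf C}_i$ over $O_{k,S}$ to lift the specialization of $\alpha$ to an honest $O_v$-point $\beta\in{\bf C}_i(O_v)$, then showing via condition v) that the unique component $\sigma_\alpha({\bf Z}_i)\setminus{\bf T}$ containing $\beta\otimes O_{\bar k_w}$ must be $\Gal(\bar k_w/k_v)$-stable, whence $\sigma_\alpha$ is Galois-fixed and $\sigma_\alpha(k_i)\subseteq k_v$. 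Without this fact, your $v'$ may correspond to a nontrivial extension of $k_v$ and your $\alpha\in O_{v'}$ does not descend.

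A second, related issue is the phrase ``assemble $\alpha$ and its Galois translates.'' Once one knows $\sigma_\alpha$ is Galois-fixed, the correct $y$ is the vector with $\gamma$ in the $\sigma_\alpha$-slot and $1$ in every other slot; no Galois translates of $\alpha$ appear, and precisely because the other slots are permuted among themselves this vector is $\Gamma_{k_v}$-invariant. With that construction the ``main technical obstacle'' you flag at the end is no obstacle at all: by formula~(\ref{mult}), $\rho_i(y)=\varrho_{\sigma_\alpha}(\gamma)=t\cdot\alpha$ on the nose, so $\alpha=t^{-1}\rho_i(y)\in{\bf T}(O_v)\cdot\rho_i({\bf V}_i(O_v)\cap T_i(k_v))$. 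In short, the approach is right, but you are missing the key Galois-descent step that makes the final assembly land in $T_0(k_v)$, and once that step is in place the verification you describe as a technical obstacle becomes immediate.
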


\begin{proof} By the above conditions i) and iii), one only needs to prove $${\bf X}(O_v) \cap T(k_v)\subseteq {\bf T}(O_v)\cdot \rho ({\bf V}(O_v)\cap T_0(k_v)). $$ Let $T_i= \Res_{k_i/k}(\Bbb G_m)$ for $1\leq i\leq d$. Since $$ \rho_i({\bf V}_i(O_v)\cap T_i(k_v)) \subseteq \rho ({\bf V}(O_v)\cap T_0(k_v))$$ by the above condition iii) and vi), it is sufficient to show that
 $${\bf U}_i(O_v)\cap T(k_v) \subseteq {\bf T}(O_v)\cdot  \rho_i({\bf V}_i(O_v)\cap T_i(k_v))$$ for each $1\leq i\leq d$ by the above condition ii).

Let $\alpha \in ({\bf U}_i(O_v)\cap T(k_v)) \setminus {\bf T}(O_v)$. Then the special point of $\alpha$ is contained in ${\bf C}_i\times_{O_k} O_v$ by the above condition ii). Then ${\bf C}_i\times_{O_k} O_v$ contains an $O_v$-point $\beta$ with the same specialization as $\alpha$ by the smoothness of ${\bf C}_i\times_{O_k} O_v$.

Fix a prime $w$ in $\bar k$ above $v$. Extending the condition v) to the ring of integers $O_{\bar{k}_w}$ of $\bar k_w$, one obtains $\sigma_\alpha\in \Upsilon_i$ such that $(\sigma_\alpha({\bf Z}_i) \setminus {\bf T})\times_{O_{\sigma_\alpha (k_i)}} O_{\bar{k}_w}$ is the unique connected component containing $\beta$. This implies that $Gal(\bar k_w/k_v)$ acts on $(\sigma_\alpha({\bf Z}_i) \setminus {\bf T})\times_{O_{\sigma_\alpha(k_i)}} O_{\bar{k}_w}$ stably. Therefore $\sigma_\alpha({\bf Z}_i)$ is defined over $O_v$ by Galois descent and $\sigma_{\alpha}(D_i)$ is defined over $k_v$. Since $Gal(\bar{k}/k)$ acts on $\{\sigma (D_i)\}_{\sigma \in \Upsilon_i}$ transitively and the stabilizer of $\sigma_\alpha (D_i)$ is $Gal(\bar{k}/\sigma_\alpha(k_i))$. On the other hand, the closed subgroup $Gal(\bar{k}_w/k_v)$ acts trivially on $\sigma_\alpha (D_i)$. One concludes that $\sigma_\alpha (k_i)\subseteq k_v$ and $O_{\sigma(k_i),S}\subset O_v$. Therefore all morphisms in the condition iv) can be extended to $O_v$.

Since $H_{et}^1(O_v, \Bbb G_m)=0$, one concludes that the homomorphism $$\phi_{\sigma_\alpha}: \  {\bf T}(O_v) \rightarrow {\bf T}_{\sigma_\alpha}(O_v) $$ is surjective by the above condition iv). There is $t\in {\bf T}(O_v)$ such that $$t \cdot \alpha\in \phi_{\sigma_\alpha}^{-1}(1)=Im(\varrho_{\sigma_\alpha})$$ over $O_v$ by the above condition iv). This implies that there is $\gamma \in \Bbb A^1_{O_v}(O_v)=O_v$ such that $\varrho_{\sigma_\alpha}(\gamma)=t \cdot \alpha$. Since $\alpha \in T(k_v)$, one has that $\gamma\neq 0$. Define $$\delta=(\delta_{\sigma})_{\sigma\in \Upsilon_i}\in {\bf V}_i(O_{\bar k_v})\subseteq \prod_{\sigma\in \Upsilon_i} O_{\bar k_v}  $$ as follows
 $$ \delta_\sigma= \begin{cases} \gamma \ \ \ & \text{if $\sigma=\sigma_\alpha$} \\
1 \ \ \ & \text{otherwise}
\end{cases} $$
Since $Gal(\bar{k}_v/k_v)$ acts on $\Upsilon_i$ but fixes $\sigma_\alpha$, one has $\delta\in {\bf V}_i(O_v)\cap T_i(k_v)$ by the above condition vi) and Galois descent. Therefore $$\rho_i(\delta)=\varrho_{\sigma_\alpha}(\gamma)=\alpha\cdot t$$ as desired by the formula (\ref{mult}). \end{proof}

The following local approximation enables us to consider ${\bf X}(O_v)\cap T(k_v)$ instead of ${\bf X}(O_v)$.

\begin{prop} \label{inv} Let $(T\hookrightarrow X)$ be a smooth toric variety over $k_v$ with $v\in \Omega_k$. If $x\in X(k_v)\setminus T(k_v)$, then there is $y\in T(k_v)$ such that $y$ is as close to $x$ as required and $$\inv_v(\xi(x))=\inv_v(\xi(y))$$ for all $\xi\in \Br_1(X)$. \end{prop}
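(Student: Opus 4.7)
The plan is to localize to an open affine toric subvariety containing $x$ and to use the structure theorem for affine smooth toric varieties (Proposition \ref{str}) to realize every class in $\Br_1$ as a pullback from a torus quotient $T_1$; the required $y$ will then be found in the $\phi$-fiber through $x$, using density of $K^\times$ in $K$ for a suitable \'etale $k_v$-algebra $K$.

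First I would apply Corollary \ref{affine} to produce an open affine toric subvariety $(T\hookrightarrow M)$ of $(T\hookrightarrow X)$ over $k_v$ with $x\in M(k_v)$. Since $M(k_v)$ is open in $X(k_v)$ in the $v$-adic topology and the restriction $\Br_1(X)\to \Br_1(M)$ is compatible with evaluation at $k_v$-points, it suffices to construct $y\in T(k_v)\cap M(k_v)$ arbitrarily close to $x$ with $\inv_v(\eta(x))=\inv_v(\eta(y))$ for every $\eta\in \Br_1(M)$.

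Next I would invoke Proposition \ref{str} to produce a finite \'etale $k_v$-algebra $K=\prod_{j=1}^r L_j$ (each $L_j/k_v$ a finite field extension), a torus $T_1=T/\Res_{K/k_v}(\Bbb G_m)$, and a $T$-equivariant morphism $\phi:M\to T_1$ extending the quotient $T\to T_1$, with fiber $\phi^{-1}(1)\cong \Res_{K/k_v}(\Bbb A^1)$ and an isomorphism $\phi^*:\Br_1(T_1)\stackrel{\sim}{\rightarrow}\Br_1(M)$. By Shapiro's lemma and Hilbert 90, $H^1(k_v,\Res_{K/k_v}(\Bbb G_m))=\bigoplus_j H^1(L_j,\Bbb G_m)=0$, so $T(k_v)\twoheadrightarrow T_1(k_v)$ and I can lift $\phi(x)$ to some $\tilde t\in T(k_v)$. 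By $T$-equivariance (and Galois-invariance of $\tilde t$), $\phi^{-1}(\phi(x))(k_v)=\tilde t\cdot \phi^{-1}(1)(k_v)=\tilde t\cdot\prod_{j=1}^r L_j$ and $T(k_v)\cap\phi^{-1}(\phi(x))(k_v)=\tilde t\cdot\prod_{j=1}^r L_j^\times$.

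Finally, each $L_j$ is a local field, so $L_j^\times$ is dense in $L_j$; hence $\prod_j L_j^\times$ is dense in $\prod_j L_j$, and translating by $\tilde t$ (which is a homeomorphism of $M(k_v)$) I obtain $y\in T(k_v)\cap \phi^{-1}(\phi(x))$ arbitrarily close to $x$ in $M(k_v)$. Because $\phi(y)=\phi(x)$, any $\eta\in \Br_1(M)$ written as $\phi^*(\eta')$ with $\eta'\in \Br_1(T_1)$ satisfies $\eta(y)=\eta'(\phi(y))=\eta'(\phi(x))=\eta(x)$ in $\Br(k_v)$, giving the required equality of invariants. The main obstacle is the identification of $\phi^{-1}(\phi(x))$ with a translate of $\Res_{K/k_v}(\Bbb A^1)$ (the Hilbert 90 step is what enables this); once this is in place, the Brauer-theoretic condition reduces to the elementary density of $K^\times$ in $K$, and the isomorphism $\phi^*$ handles the (possibly infinite) Brauer group uniformly.
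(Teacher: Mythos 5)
Your proposal is correct and follows essentially the same route as the paper's proof: reduce to an open affine toric subvariety via Corollary \ref{affine}, apply the structure theorem Proposition \ref{str} to get the quotient $\phi:M\to T_1$ with $\phi^{-1}(1)\cong\Res_{K/k_v}(\Bbb A^1)$ and $\Br_1(T_1)\xrightarrow{\sim}\Br_1(M)$, lift $\phi(x)$ to $T(k_v)$ by Hilbert 90 and Shapiro, approximate within the fiber using density of $K^\times$ in $K$, and conclude by functoriality of the Brauer pairing. The paper writes the same argument with $\alpha=\tilde t$ and $z'\in\prod_i E_i^\times$ close to $\alpha^{-1}x$.
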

\begin{proof} By corollary \ref{affine}, there is an open affine smooth toric subvariety $M$ of $X$ such that $x\in M(k_v)$. By Proposition \ref{str}, there are finite extensions $E_i/k_v$ such that
 $$ (\prod_i \Res_{E_i/k_v}(\Bbb G_m) \hookrightarrow \prod_i \Res_{E_i/k_v}(\Bbb A^1))$$
is a closed toric subvariety of $(T\hookrightarrow M)$ and the quotient homomorphism $$\phi: \ T\rightarrow T_1 \ \ \ \text{with} \ \ \ T_1=T/(\prod_i \Res_{E_i/k_v}(\Bbb G_m) )$$ can be extended to $\phi: M\rightarrow T_1$ and $\phi^{-1}(1)=\prod_i \Res_{E_i/k_v}(\Bbb A^1)$.

By Shapiro's Lemma and Hilbert 90, one has the map $T(k_v)\xrightarrow{\phi} T_1(k_v)$ is surjective. There is $\alpha\in T(k_v)$ such that $\phi(x)=\phi(\alpha)$. This implies that  $$\alpha^{-1} x \in (\phi^{-1}(1))(k_v)=\prod_i E_i. $$ Choose $z'\in \prod_i E_i^\times$ close to $\alpha^{-1} x$ such that $y=\alpha \cdot z'$ is as close to $x$ as required.

For any $\xi\in \Br_1(X)$, there are $\eta\in \Br_1(T_1)$ such that
$$\phi^*(\eta)=\xi$$
by $ \Br_1(X)\hookrightarrow  \Br_1(M) \stackrel{\sim}{\leftarrow}\Br_1(T_1)$ and Proposition \ref{str}. Since $\phi(x)=\phi(y)=\phi(\alpha)$, one has
$$\inv_v(\eta(\phi(x)))=\inv_v(\eta(\phi(y))) . $$
 By functoriality, this implies
 $$ \inv_v(\phi^*(\eta)(x))=\inv_v (\phi^*(\eta)(y)).$$
 Since
 $$\inv_v(\phi^*(\eta)(x))=\inv_v(\xi(x)) \ \ \ \text{and} \ \ \ \inv_v (\phi^*(\eta)(y))= \inv_v(\xi(y)), $$
  one obtains the result as desired. \end{proof}

\begin{prop}\label{typed} If $X$ is a smooth toric variety of pure divisorial type, then $X$ satisfies strong approximation with Brauer-Manin obstruction off $\infty_k$. \end{prop}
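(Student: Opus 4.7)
The plan is to combine the relative strong approximation for tori (Proposition \ref{relative}) applied to the homomorphism $\rho\colon T_0\to T$ with the strong approximation for the standard toric variety $V$ off $\infty_k$ (Corollary \ref{v}), linked by the local decomposition of Proposition \ref{int} and by the morphism $\rho\colon V\to X$ from Proposition \ref{rho}. Given $(x_v)\in X(\mathbf A_k)^{\Br(X)}$ and a target open neighborhood $U$ of $(x_v)_{v<\infty_k}$ in $X(\mathbf A_k^\infty)$, I would first apply Proposition \ref{inv} at every place $v\in\Omega_k$ to replace $x_v$ by some $y_v\in T(k_v)$, preserving the local invariants $\inv_v(\xi(\cdot))$ for all $\xi\in\Br_1(X)$. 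The new adelic point $(y_v)$ still lies in $X(\mathbf A_k)^{\Br(X)}$ but belongs to $T(\mathbf A_k)$, and it suffices to approximate $(y_v)_{v<\infty_k}$ in a (possibly shrunk) neighborhood contained in $U$.

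Next, enlarge the finite set $S\supseteq\infty_k$ so that the conditions i)--vi) preceding Proposition \ref{int} hold, $y_v\in{\bf X}(O_v)$ for every $v\notin S$, and $U=\prod_{v\in S\setminus\infty_k}U_v\times\prod_{v\notin S}{\bf X}(O_v)$. For $v\notin S$, Proposition \ref{int} furnishes a decomposition $y_v=\tilde t_v\cdot\rho(\tilde w_v)$ with $\tilde t_v\in{\bf T}(O_v)$ and $\tilde w_v\in T_0(k_v)\cap{\bf V}(O_v)$; for $v\in S$ set $\tilde t_v=y_v$ and $\tilde w_v=1$. The critical Brauer check is that the torus adele $\tilde t:=(\tilde t_v)$ lies in $T(\mathbf A_k)_\bullet^{\ker(\rho_{\Br}^*)}$. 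Indeed, evaluation of $\Br_a$-classes on torus points is a group homomorphism, and by Proposition \ref{br} one has $\ker(\rho_{\Br}^*\colon\Br_a(T)\to\Br_a(T_0))=\Br_a(X)$; so for $\eta\in\ker(\rho_{\Br}^*)$, $\eta(\tilde t_v)=\eta(y_v)-\eta(\rho(\tilde w_v))$, and since $\rho_{\Br}^*(\eta)$ is constant (i.e.\ lifts to $\Br(k)$ inside $\Br_1(T_0)$), global reciprocity yields $\sum_v\inv_v(\eta(\rho(\tilde w_v)))=0$; combined with $(y_v)\in X(\mathbf A_k)^{\Br(X)}$ this gives $\sum_v\inv_v(\eta(\tilde t_v))=0$.

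Now invoke the two approximation results sequentially. Since $T_0=\Res_{K/k}(\G_m)$ is quasi-trivial, $\Sha^1(T_0)=0$, so Proposition \ref{relative} (reformulated via Remark \ref{relarem}) produces $t^*\in T(k)$ and $s=(s_v)\in T_0(\mathbf A_k)$ such that $t^*\cdot\rho(s_v)$ is arbitrarily close to $\tilde t_v$ at every $v\in S\setminus\infty_k$ and lies in ${\bf T}(O_v)$ at every $v\notin S$. Enlarging $S$ to $S'$ to absorb the finitely many places where $s_v\notin{\bf T}_0(O_v)$, we get $t^*\in{\bf T}(O_v)$ for all $v\notin S'$. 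Next, apply Corollary \ref{v} to the adele $(s_v\tilde w_v)_{v<\infty_k}\in V(\mathbf A_k^\infty)$ to obtain $w^*\in V(k)$ close to $s_v\tilde w_v$ at each $v\in S'\setminus\infty_k$ and lying in ${\bf V}(O_v)$ for $v\notin S'$; since $T_0\subset V$ is open and $s_v\tilde w_v\in T_0(k_v)$, close enough forces $w^*\in T_0(k)$.

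Finally, set $x:=m_X(t^*,\rho(w^*))\in X(k)$. At each $v\in S\setminus\infty_k$ the telescoping $x\approx t^*\cdot\rho(s_v\tilde w_v)=(t^*\rho(s_v))\cdot\rho(\tilde w_v)\approx\tilde t_v\cdot\rho(\tilde w_v)=y_v$ places $x$ inside $U_v$, while for $v\notin S'$ the extended integral action $m_X\colon{\bf T}\times{\bf X}\to{\bf X}$ applied to $t^*\in{\bf T}(O_v)$ and $\rho(w^*)\in{\bf X}(O_v)$ yields $x\in{\bf X}(O_v)$, so $x$ maps into $U$ as required. The main obstacle is the coordination of the two approximation steps: the adelic parameter $s$ delivered by Proposition \ref{relative} reappears in the target of Corollary \ref{v}, and the integrality data outside $S'$ must be unwound in the right order; the whole argument rests on the identification $\ker(\rho_{\Br}^*)=\Br_a(X)$ from Proposition \ref{br}, which is what converts the Brauer--Manin hypothesis on $X$ into a usable condition on the auxiliary torus adele $\tilde t$.
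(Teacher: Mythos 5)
Your proof is correct and follows the paper's strategy essentially verbatim: Proposition \ref{inv} to replace adelic points by torus points, Proposition \ref{int} to decompose $y_v=\tilde t_v\cdot\rho(\tilde w_v)$ for $v\notin S$, the Brauer check on $(\tilde t_v)$ so that Proposition \ref{relative} applies to $\rho\colon T_0\to T$ (with $\Sha^1(T_0)=0$ since $T_0$ is quasi-trivial), Corollary \ref{v} for the standard toric variety $V$, and finally $x=m_X(t^*,\rho(w^*))$. The one place where you argue slightly differently is the verification that $(\tilde t_v)\in T(\mathbf A_k)_\bullet^{\ker(\rho_{\Br}^*)}$. You invoke additivity of the local evaluation pairing on the torus $T$ plus global reciprocity for the class $\rho_{\Br}^*(\eta)\in\Br(k)$; this is a valid argument (the bilinearity of $T(k_v)\times\Br_a(T)\to\Q/\Z$ is a standard fact for tori, but it is an extra input). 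The paper instead computes, via the $T$-action on $X$ and functoriality, $\inv_v(\xi(y_v))=\inv_v((\rho^*\tilde t_v^*\xi)(\tilde w_v))$ and $\inv_v(\xi(\tilde t_v))=\inv_v((\rho^*\tilde t_v^*\xi)(1_{T_0}))$, then kills the class $\rho^*\tilde t_v^*\xi$ by purity: $\Br_1(V\times_k k_v)=\Br(k_v)$ because $V$ is the complement in affine space of a closed subset of codimension $\geq 2$. This gives the stronger place-by-place equality $\inv_v(\xi(\tilde t_v))=\inv_v(\xi(y_v))$ without appealing to primitivity of Brauer classes on tori, and stays within tools the paper has already set up. Either route works; the paper's is more self-contained, yours is a touch more conceptual. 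The bookkeeping in your final telescoping and the enlargement of $S$ to $S'$ is sound.
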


\begin{proof} For any non-empty open subset $\Xi \subseteq X(\mathbf A_k)^{\Br_1 X}$, there are a sufficiently large finite subset $S_1$ of $\Omega_k$ containing $S$ and an open subset $W=\prod_{v\in \Omega_k} W_v$ of $X(\mathbf A_k)$ such that
 $$ \emptyset \neq  W\cap X(\mathbf A_k)^{\Br_1 X} \subseteq \Xi ,$$
and $W_v={\bf X}(O_v)$ for all $v\not\in S_1.$

Let $ (x_v)_{v\in \Omega_k} \in  W\cap X(\mathbf A_k)^{\Br_1 X}$. By Proposition \ref{inv}, one can assume that $x_v\in T(k_v)$ for all $v\in \Omega_k$. Then  $$x_v\in W_v \cap T(k_v)={\bf X}(O_v)\cap T(k_v) ={\bf T}(O_v)\cdot \rho ({\bf V}(O_v)\cap T_0(k_v))$$ for $v\not\in S_1$ by Proposition \ref{int}, where $T_0=  \prod_{i=1}^d \Res_{k_i/k}(\Bbb G_m)$. Let $$t_v\in {\bf T}(O_v) \ \ \ \text{and} \ \ \ \beta_v\in {\bf V}(O_v)\cap T_0(k_v)$$ such that $x_v=t_v \cdot \rho(\beta_v)$ for all $v\not\in S_1$ and $t_v=x_v$ for $v\in S_1$. Then $(t_v)_{v\in \Omega_k}\in T(\mathbf A_k)$.

Since $t_v$ induces a morphism $X\times_k k_v\rightarrow X \times_k k_v$ for all $v\in \Omega_k$, one has
$$ \inv_v(\xi(x_v))= \inv_v(\xi(t_v\cdot \rho(\beta_v)))=\inv_v((\rho^* t^* \xi)(\beta_v))$$ and $$ \inv_v(\xi(t_v))= \inv_v((\rho^* t^* \xi)(1_{T_0}))  $$ for all $\xi\in \Br_1(X)$. By the purity of Brauer groups (see Theorem 6.1 of  Part III in \cite{G}), one has $Br_1(V\times_k k_v)=Br(k_v)$. Therefore $ \inv_v(\xi(x_v))=\inv_v(\xi(t_v))$ for all $v\in \Omega_k$.

By Proposition \ref{br} and Proposition \ref{relative} or Remark \ref{relarem}, there are $t\in T(k)$ and $y_\mathbf A\in T_0(\mathbf A_k)$ such that $$t\rho (y_\mathbf A)\in (\prod_{v\in \infty_k} T(k_v)\times \prod_{v\in S_1\setminus \infty_k} (W_v\cap T(k_v)) \times \prod_{v\not\in S_1} {\bf T}(O_v)) . $$ Therefore the open subset of $V(\mathbf A_k)$
$$ \rho^{-1}(t^{-1} (\prod_{v\in \infty_k} X(k_v) \times \prod_{v\not\in \infty_k} W_v)) $$ contains $y_\mathbf A$ and is not empty. Then there is $$y\in V(k) \cap \rho^{-1}(t^{-1} (\prod_{v\in \infty_k} X(k_v) \times \prod_{v\not\in \infty_k} W_v)) $$ by Corollary \ref{v}. This implies that
$$t \cdot \rho(y) \in (\prod_{v\in \infty_k} X(k_v) \times \prod_{v\not\in \infty_k} W_v) $$ as desired. \end{proof}

For general smooth toric varieties, one needs to extend a part of Proposition \ref{str} to integral models.

\begin{lem}\label{integ} Suppose an affine smooth toric variety $(T\hookrightarrow X)$ over $k_v$ can be extended to an open immersion $\bf T\hookrightarrow \bf X$ over $O_v$ such that $\bf T$ is a torus over $O_v$ and $\bf X$ is an affine scheme of finite type over $O_v$ for $v<\infty_k$. If the base change of the above open immersion fits into a commutative diagram
\[ \begin{CD}   {\bf T}\times_{O_v} {O_v}^{ur}  @>>> {\bf X}\times_{O_v} O_{v}^{ur}   \\
 @V{\cong}VV       @VV{\cong}V \\
 \Bbb G_{m, O_v^{ur}}^{s+t} @>>>  \Bbb A_{O_v^{ur}}^s \times_{O_v^{ur}} \Bbb G_{m, O_v^{ur}}^t
\end{CD} \]
over $O_v^{ur}$, where $O_v^{ur}$ is the ring of integers of the maximal unramified extension $k_v^{ur}$ of $k_v$ such that the left vertical arrow is an isomorphism of group schemes over $O_v^{ur}$, then one has the following commutative diagram

\[ \begin{CD}   \prod_{i=1}^h   \Res_{O_{k_i}/O_{v}}(\Bbb G_{m, O_{k_i}})  @>{\iota}>> {\bf T}   \\
 @VVV       @VVV \\
 \prod_{i=1}^h \Res_{O_{k_i}/O_v} (\Bbb A_{O_{k_i}}^1) @>>>  {\bf X}
\end{CD} \]
where the horizontal arrows are closed immersions and the vertical arrows are open immersions and $O_{k_i}$'s are the rings of integers of finite unramified extensions $k_i/k_v$ for $1\leq i\leq h$. Moreover $\iota$ is a homomorphism of commutative group schemes over $O_v$ and the quotient map $\phi: {\bf T}\rightarrow coker(\iota)$ can be extended to $\phi: {\bf X} \rightarrow coker(\iota)$ such that $$\phi^{-1}(1) = \prod_{i=1}^h \Res_{O_{k_i}/O_v} (\Bbb A_{O_{k_i}}^1) $$ over $O_v$.
\end{lem}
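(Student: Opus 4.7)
The plan is to imitate the proof of Proposition \ref{str} at the integral level, replacing the role of $\bar k$ by $O_v^{ur}$ and using that the hypothesis provides the trivialization $\mathbf X^{ur}\simeq \Bbb A^s_{O_v^{ur}}\times_{O_v^{ur}} \Bbb G_{m,O_v^{ur}}^t$ already over the unramified closure. First, after normalizing the coordinates so that the identity section $1_\mathbf{T}$ corresponds to $(1,\dots,1)$, one computes
$$O_v^{ur}[\mathbf{X}^{ur}]^\times/(O_v^{ur})^\times \;=\; \bigoplus_j \Bbb Z\, y_j, \qquad O_v^{ur}[\mathbf{T}^{ur}]^\times/(O_v^{ur})^\times \;=\; \bigoplus_i \Bbb Z\, x_i\oplus\bigoplus_j \Bbb Z\, y_j,$$
together with a $\Gamma$-equivariant short exact sequence whose third term is the divisor lattice $\bigoplus_i \Bbb Z[x_i=0]$, where $\Gamma=\Gal(k_v^{ur}/k_v)$.

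The key step is to analyze the $\Gamma$-action on the character lattice $T^*$. Since $\mathbf{T}$ is a torus over $O_v$, this action factors through a finite unramified quotient of $\Gamma$. Using that characters are the grouplike elements of the Hopf algebra of $\mathbf{T}^{ur}$ and that $\Gamma$ fixes the identity section, evaluating at $1_\mathbf{T}$ forces $\sigma(x_i)$ to be a character whose divisor equals $\sigma([x_i=0])=[x_{\pi_\sigma(i)}=0]$, and hence $\sigma(x_i)=x_{\pi_\sigma(i)}\prod_j y_j^{c_{i,j,\sigma}}$ for some integer exponents. Decomposing $\{x_1,\dots,x_s\}$ into $\Gamma$-orbits of sizes $n_1,\dots,n_h$ then produces unramified extensions $k_i/k_v$ of degrees $n_i$ (the fixed fields of the orbit stabilizers), and the $\Gamma$-equivariant surjection $T^*\twoheadrightarrow \bigoplus_i \Bbb Z[x_i=0]$ is dual to a closed immersion of $O_v$-group schemes $\iota\colon \prod_{i=1}^h \Res_{O_{k_i}/O_v}(\Bbb G_{m,O_{k_i}})\hookrightarrow \mathbf{T}$. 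The $\Gamma$-stable kernel $B_0=\bigoplus_j \Bbb Z\, y_j$ is then the character lattice of the $O_v$-torus $\mathbf{T}_1:=coker(\iota)$.

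The $\Gamma$-equivariant inclusion of Hopf sub-$O_v^{ur}$-algebras $O_v^{ur}[B_0]\hookrightarrow O_v^{ur}[\mathbf{X}^{ur}]$ then descends by faithfully flat Galois descent to a morphism $\phi\colon \mathbf{X}\to \mathbf{T}_1$ over $O_v$ extending the quotient $\mathbf{T}\to\mathbf{T}_1$, and the required diagram commutes by construction. For the fiber, $\phi^{-1}(1_{\mathbf{T}_1})$ base-changes over $O_v^{ur}$ to $\Spec O_v^{ur}[x_1,\dots,x_s]=\Bbb A^s_{O_v^{ur}}$; reducing the identity $\sigma(x_i)=x_{\pi_\sigma(i)}\prod_j y_j^{c_{i,j,\sigma}}$ modulo the $\Gamma$-stable ideal $(y_j-1)_j$ defining this fiber gives $\sigma(x_i)\equiv x_{\pi_\sigma(i)}$, so the induced $\Gamma$-action on the fiber's coordinate ring is by pure permutation of the $x_i$'s, and Galois descent identifies $\phi^{-1}(1_{\mathbf{T}_1})$ with $\prod_{i=1}^h \Res_{O_{k_i}/O_v}(\Bbb A^1_{O_{k_i}})$. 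The main technical point is controlling $\sigma(x_i)$ precisely enough to ensure that, after restriction to the fiber, the a priori possible $y$-contributions collapse to $1$; once this observation isolates a pure permutation action on $\Bbb A^s_{O_v^{ur}}$, the rest follows formally from unramified Galois descent over $O_v^{ur}$.
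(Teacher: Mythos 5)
Your proof follows essentially the same approach as the paper: the same exact sequence of units of $\mathbf X^{ur}$ and $\mathbf T^{ur}$ over $O_v^{ur}$, the same invocation of the SGA3 anti-equivalence between tori over $O_v$ and $\Gal(k_v^{ur}/k_v)$-lattices to produce $\iota$ and its cokernel, the same descent of the Hopf subalgebra generated by $\mathbf B=\{f: f(1_{\mathbf T})=1\}$ to define $\phi$ on $\mathbf X$, and Galois descent to compute $\phi^{-1}(1)$. Your explicit bookkeeping $\sigma(x_i)=x_{\pi_\sigma(i)}\prod_j y_j^{c_{i,j,\sigma}}$, together with the observation that the $y$-contributions vanish modulo the $\Gamma$-stable ideal $(y_j-1)_j$ cutting out the fiber, is in fact a more careful justification than the paper's terse claim that $\Gamma$ acts on $\{x_i\}$ by permutation ``by smoothness and normalization'': as stated, the normalization $x_i(1_{\mathbf T})=1$ only kills the scalar part of the unit, not the $\prod_j y_j^{c_j}$ part, so the restriction-to-the-fiber step you spell out is genuinely needed.
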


\begin{proof}

Since $\Pic({\bf X}\times_{O_v} O_v^{ur})=0$, one has the following short exact sequence
$$ 1\rightarrow O_v^{ur}[{\bf X}]^\times /{O_v^{ur}}^\times \xrightarrow{\phi^*}  O_v^{ur}[{\bf T}]^\times /{O_v^{ur}}^\times \xrightarrow{\iota^*} \Div_{({\bf X}\times_{O_v} O_v^{ur}) \setminus ({\bf T} \times_{O_v} O_v^{ur})}({\bf X}\times_{O_v} O_v^{ur}) \rightarrow 1 $$ of $\Gal(k_v^{ur}/k_v)$-module by sending $f\mapsto div_{({\bf X}\times_{O_v} O_v^{ur}) \setminus ({\bf T} \times_{O_v} O_v^{ur})}(f)$ for any $f\in  O_v^{ur}[{\bf T}]^\times$. By Theorem 1.2 and  Theorem 3.1 in Expos\'e VIII of \cite{SGA3}, one obtains an exact sequence of affine group schemes
$$1\rightarrow  \prod_{i=1}^h \Res_{O_{k_i}/O_{v}}(\Bbb G_{m, O_{k_i}}) \xrightarrow{\iota} {\bf T}\xrightarrow{\phi} coker(\iota)\rightarrow 1  $$ over $O_v$ where $O_{k_i}$'s are the rings of integers of the finite unramified extensions $k_i/k_v$ for $1\leq i\leq h$, and where $coker(\iota)$ is a torus over $O_v$ with
$$ \Hom_{O_v^{ur}}(coker(\iota), \Bbb G_{m, O_v}) = O_v^{ur}[{\bf X}]^\times /{O_v^{ur}}^\times$$
as $\Gal(k_v^{ur}/k_v)$-module. Let $$ {\bf B} = \{ f\in O_v^{ur}[{\bf X}]^\times: \  f(1_{\bf T})=1 \} $$ which is stable under the action of $\Gal(k_v^{ur}/k_v)$. Then $O_v^{ur}[{\bf X}]^\times={O_v^{ur}}^\times \oplus {\bf B}$ as $\Gal(k_v^{ur}/k_v)$-module and $$coker(\iota) \times_{O_v} O_v^{ur} \cong \Spec(O_v^{ur}[{\bf B}])  \ \ \ \text{induced by} \ \ \ {\bf B} \cong O_v^{ur}[{\bf X}]^\times /{O_v^{ur}}^\times $$ is compatible with ${\Gal(k_v^{ur}/k_v)}$-action by Theorem 1.2 in Expos\'e VIII of \cite{SGA3}. Moreover, the natural inclusion of $O_v^{ur}$-algebras $O_v^{ur}[{\bf B}] \subseteq O_v^{ur}[{\bf X}]$ which is also compatible with ${\Gal(k_v^{ur}/k_v)}$-action gives the extension ${\bf X} \xrightarrow{\phi} coker(\iota)$ of ${\bf T}\xrightarrow{\phi} coker(\iota)$ over $O_v$.

Write $${\bf T}\times_{O_v} O_v^{ur} = \Spec( O_v^{ur}[x_1, x_1^{-1},\cdots, x_{s},x_{s}^{-1}, y_1, y_1^{-1}, \cdots, y_t, y_t^{-1}]) $$ and
 $$ {\bf X}\times_{O_v} O_v^{ur} = \Spec( O_v^{ur}[x_1, \cdots, x_s, y_1, y_1^{-1}, \cdots, y_t, y_t^{-1} ] ) $$ such that $x_i(1_{\bf T})=y_j(1_{\bf T})=1$ for $1\leq i\leq s$ and $1\leq j\leq t$ by the given diagram.  Then $$coker(\iota) \times_{O_v} O_v^{ur} = \Spec (O_v^{ur} [y_1, y_1^{-1}, \cdots, y_t, y_t^{-1}]) $$ and $$\phi^{ur}=\phi\times_{O_v} O_v^{ur}: \ \  {\bf X}\times_{O_v} O_v^{ur} \rightarrow coker(\iota) \times_{O_v} O_v^{ur}$$ is the projection and
 $$ \phi^{-1}(1)\times_{O_v} O_v^{ur} = (\phi^{ur})^{-1}(1) =  \Spec (O_v^{ur} [x_1, \cdots, x_{s}]) .$$ Since $$div_{({\bf X}\times_{O_v} O_v^{ur}) \setminus ({\bf T} \times_{O_v} O_v^{ur})}(x_i)=div_{{\bf X}\times_{O_v} O_v^{ur}}(x_i)$$ and the action of ${\Gal(k_v^{ur}/k_v)}$ on $\{div_{{\bf X}\times_{O_v} O_v^{ur}}(x_i)\}_{i=1}^s $ is same as the action on the coordinates $\{x_i\}_{i=1}^s$ by smoothness of $ {\bf X}\times_{O_v} O_v^{ur}$ and the normalization of $x_i$ for $1\leq i\leq s$, one concludes that
$$\phi^{-1}(1) = \prod_{i=1}^h \Res_{O_{k_i}/O_v} (\Bbb A_{O_{k_i}}^1) $$  as required.
\end{proof}

\begin{thm}\label{end} Any smooth toric variety satisfies strong approximation with Brauer-Manin obstruction off $\infty_k$. \end{thm}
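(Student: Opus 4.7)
The plan is to reduce to the pure divisorial case settled in Proposition \ref{typed}. By Proposition \ref{red} there is a unique open toric subvariety $(T\hookrightarrow Y)$ of $X$ of pure divisorial type with $\codim(X\setminus Y,X)\geq 2$. Cohomological purity for the Brauer group on smooth schemes (as already invoked in the proof of Proposition \ref{br}) then gives an isomorphism $\Br(X)\stackrel{\sim}{\rightarrow}\Br(Y)$ via restriction along $Y\hookrightarrow X$, which descends to $\Br_1(X)\stackrel{\sim}{\rightarrow}\Br_1(Y)$ by intersecting with the kernels to the geometric Brauer groups. Moreover $\Br_1(X)/\Br(k)=H^1(\Gamma_k,\Pic(X_{\bar k}))$ is finite, since $\Pic(X_{\bar k})$ is a finitely generated $\Gamma_k$-module for a smooth toric variety.

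Let $\Xi\subseteq \mathrm{Pr}_{\infty_k}(X(\mathbf A_k)^{\Br(X)})$ be a non-empty open subset and lift it to an adelic open $W=\prod_v W_v\subseteq X(\mathbf A_k)$ with $W_v=\mathbf X(O_v)$ for $v\not\in S_1$, where the finite set $S_1\supseteq\infty_k$ is enlarged enough that: (i) $\mathbf X\setminus \mathbf Y$ has relative codimension $\geq 2$ in the smooth model $\mathbf X\times_{O_k}O_{k,S_1}$; (ii) a fixed finite set of generators of $\Br_1(X)/\Br(k)$ extends to $\Br(\mathbf X\times_{O_k}O_{k,S_1})$; (iii) the data needed to invoke Proposition \ref{typed} for $Y$ over $O_{k,S_1}$ are in place. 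Fix $(x_v)\in W\cap X(\mathbf A_k)^{\Br(X)}$. At each $v\in S_1$, Proposition \ref{inv} produces $y_v\in T(k_v)\cap W_v\subseteq Y(k_v)$ $v$-adically close to $x_v$ with $\inv_v(\xi(y_v))=\inv_v(\xi(x_v))$ for every $\xi\in\Br_1(X)$. At each $v\not\in S_1$, set $y_v$ equal to the identity of $\mathbf T(O_v)\subseteq \mathbf Y(O_v)$; since $\Br(O_v)=0$ and every chosen generator of $\Br_1(X)/\Br(k)$ extends to $\mathbf X$ at such $v$, both $\inv_v(\xi(x_v))$ and $\inv_v(\xi(y_v))$ vanish, so the modification is harmless for the Brauer-Manin pairing. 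Consequently $(y_v)\in W\cap Y(\mathbf A_k)^{\Br_1(Y)}$ under the identification $\Br_1(X)\cong\Br_1(Y)$.

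Proposition \ref{typed} then produces $y_0\in Y(k)\subseteq X(k)$ approximating $(y_v)$ off $\infty_k$; by our control at places in $S_1$ together with the trivial inclusion $\mathbf Y(O_v)\subseteq \mathbf X(O_v)=W_v$ at $v\not\in S_1$, the point $y_0$ lies in $\mathrm{Pr}_{\infty_k}(W)\cap X(k)\subseteq\Xi$, and the density in the larger set $X(\mathbf A_k)^{\Br_1(X)}$ automatically yields density in $X(\mathbf A_k)^{\Br(X)}$ since $X(k)$ is contained in the latter. The main obstacle is preserving the global Brauer-Manin orthogonality condition under the adelic modification $(x_v)\mapsto(y_v)$ at infinitely many non-archimedean places; this is handled by reducing the Brauer-Manin sum to a finite one via the finiteness of $\Br_1(X)/\Br(k)$ and the vanishing $\Br(O_v)=0$ for non-archimedean $v$, so that only the finitely many places in $S_1$ — on which Proposition \ref{inv} faithfully preserves the local invariants — actually contribute to the pairing.
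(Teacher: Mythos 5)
Your overall strategy is the right one — reduce to the pure divisorial open subvariety $Y$ of Proposition \ref{red} via codimension-$2$ purity, clean up the places in a large finite set $S_1$ using Proposition \ref{inv}, and then invoke Proposition \ref{typed} — but there is a genuine gap at the places $v\notin S_1$, and it is exactly the point the paper spends Lemma \ref{integ} on.

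You assert that $\Br_1(X)/\Br(k)\cong H^1(\Gamma_k,\Pic(X_{\bar k}))$ is finite and then spread out a finite set of generators to $\Br(\mathbf X\times_{O_k}O_{k,S_1})$, so that replacing $x_v$ by $1_T\in\mathbf T(O_v)\subseteq\mathbf Y(O_v)$ at $v\notin S_1$ does not change any local invariant. Both the identification and the finiteness fail for open toric varieties. By Hochschild--Serre, $\Br_a(X)$ has a contribution from $H^2\bigl(k,\bar k[X]^\times/\bar k^\times\bigr)$, and for a smooth toric variety of pure divisorial type the group $\bar k[X]^\times/\bar k^\times$ is typically nonzero; for instance for $X=\Bbb G_m$ (or $X=\Bbb A^1\times_k\Bbb G_m$) one gets $\Br_a(X)\supseteq H^2(k,\Bbb Z)\cong\Hom_{\mathrm{cont}}(\Gamma_k,\Bbb Q/\Bbb Z)$, which is an infinite, non-finitely-generated torsion group. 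Consequently there is no finite $S_1$ for which $\Br(\mathbf X\times_{O_k}O_{k,S_1})$ surjects onto $\Br_1(X)/\Br(k)$, and you cannot conclude $\inv_v(\xi(x_v))=\inv_v(\xi(1_T))$ for arbitrary $\xi\in\Br_1(X)$ at $v\notin S_1$. Since $\mathbf X(O_v)\ne\mathbf Y(O_v)$ in general (points may reduce into the codimension-$\geq 2$ locus), you do need to modify $x_v$ at these places, and your mechanism for controlling the invariants under this modification is not available.

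The paper fills this gap by a purely local argument: under the unramifiedness hypothesis at $v$, Lemma \ref{integ} produces an affine toric chart $\mathbf M_v$ over $O_v$ containing $x_v$ together with a fibration $\phi:\mathbf M_v\to\mathbf T_1$ over $O_v$ whose restriction $\mathbf T(O_v)\to\mathbf T_1(O_v)$ is surjective (since $H^1_{\etale}(O_v,\ker\phi)=0$); picking $t_v\in\mathbf T(O_v)$ with $\phi(t_v)=\phi(x_v)$ and using that $\Br_1$ factors through $\mathbf T_1$ (Proposition \ref{str}/Proposition \ref{inv}) gives $\inv_v(\xi(x_v))=\inv_v(\xi(t_v))$ for \emph{all} $\xi\in\Br_1(X)$ simultaneously, with no appeal to finiteness or finite generation of $\Br_a(X)$. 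This local structure theorem over $O_v$ is the nontrivial ingredient your proposal omits.
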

\begin{proof} Let $(T\hookrightarrow X)$ be a smooth toric variety over $k$ and $\mathfrak F$ be the set of all open affine toric sub-varieties over $\bar k$. Since there are only finitely many $T(\bar k)$ orbits over $\bar k$ by Lemma \ref{orb}, one gets $\mathfrak F$ is finite. Moreover if $A$ and $B$ are in $\mathfrak F$, then $A\cap B\in \mathfrak F$ and $\sigma(A)\in \mathfrak F$ for any $\sigma\in \Gamma_k$ by the separateness of $X$ over $k$. Let $k'/k$ be a finite Galois extension such that $T\times_k k'\cong \Bbb G_m^n$ and $U$ is defined over $k'$ and $U\cong \Bbb A^{s_U}\times \Bbb G_m^{t_U} $ with non-negative integers $s_U$ and $t_U$ over $k'$ for all $U\in \mathfrak F$.

By Proposition \ref{red}, there is a unique open toric subvariety $Y\subset X$ of pure divisorial type over $k$ such that $dim(X\setminus Y)< \dim(T)-1$. Let $S$ be a finite subset of $\Omega_k$ containing $\infty_k$ and $\bf X$, $\bf Y$ and $\bf T$ be the integral model of $X$, $Y$ and $T$ over $O_{k,S}$ respectively such that

1) Every prime $v\not\in S$ is unramified in $k'/k$.

2) The open immersion $T\hookrightarrow X$ and the action $T\times_k X\xrightarrow{m_X} X$ extend to
$$ {\bf T}\hookrightarrow {\bf X} \ \ \ \text{and} \ \ \ {\bf T}\times_{O_{k,S}} {\bf X} \xrightarrow{m_{\bf X}} {\bf X} $$ over $O_{k,S}$.

3) The open immersion $T\hookrightarrow Y$ and the action $T\times_k Y\xrightarrow{m_Y} Y$ extend to
$$ {\bf T}\hookrightarrow {\bf Y} \ \ \ \text{and} \ \ \ {\bf T}\times_{O_{k,S}} {\bf Y} \xrightarrow{m_{\bf Y}} {\bf Y} $$ over $O_{k,S}$.

4) The open immersion $Y\hookrightarrow X$ extends to ${\bf Y} \hookrightarrow {\bf X}$ over $O_{k,S}$.

5) Let $\underline{ \mathfrak F}$ be the set of an integral model $\bf U$ over the integral closure $O_{k',S}$ of $O_{k,S}$ in $k'$ with an open immersion ${\bf U} \hookrightarrow {\bf X}\times_{O_{k,S}} O_{k',S}$ over $O_{k',S}$ which extends $U \hookrightarrow X\times_k k'$ over $k'$ for each element $U\in \mathfrak F$ such that $${\bf A} \cap {\bf B} \in \underline{\mathfrak F} \ \ \ \text{and} \ \ \ \sigma({\bf A})\in \underline{\mathfrak F}$$ whenever $\bf A, \bf B\in \underline{\mathfrak F}$ and $\sigma\in \Gal(k'/k)$. Moreover
$${\bf T}\times_{O_{k,S}} O_{k',S} \cong \Bbb G_{m, O_{k',S}}^n  \ \ \ \text{and} \ \ \ {\bf X}\times_{O_{k,S}} O_{k',S} = \bigcup_{{\bf U}\in \underline{\mathfrak F}} {\bf U}   $$ with ${\bf U} \cong \Bbb A_{O_{k',S}}^{s_U}\times \Bbb G_{m, O_{k',S}}^{t_U}$ over $O_{k',S}$ for each ${\bf U} \in \underline{\mathfrak F}$.

\medskip

Let $W=\prod_{v\in \Omega_k} W_v$ be an open subset of $X(\mathbf A_k)$ and $S_1$ be a finite subset of $\Omega_k$ containing $S$ such that
$$ (x_v)_{v\in \Omega_k} \in  W\cap X(\mathbf A_k)^{\Br_a X} \ \ \  \text{and} \ \ \  W_v={\bf X}(O_v)$$
for all $v\not\in S_1$.

For $v\in S_1$, we can assume that $x_v\in T(k_v)\cap W_v\subseteq Y(k_v)\cap W_v$ by Proposition \ref{inv}.

For $v\not\in S_1$, we can assume that $x_v\in {\bf T}(O_v)$. Indeed, since $x_v\in {\bf X}(O_v)$, there is ${\bf U}\in \underline{\mathfrak F}$ in the above condition 5) such that $x\in {\bf U}(O_{k'_w})$ for a prime $w|v$ in $k'$, where $O_{k'_w}$ is the ring of integers of $k'_w$. By the above condition 5)
$$  \bigcap_{\sigma\in \Gal(k'_w/k_v)} \sigma ({\bf U}) \in \underline{\mathfrak F}$$ and there is an affine scheme ${\bf M}_v$ over $O_v$ such that
 $$ {\bf M}_v \times_{O_v} O_{k'_w} =  (\bigcap_{\sigma\in \Gal(k'_w/k_v)} \sigma ({\bf U}))\times_{O_{k',S}} O_{k'_w} $$
 with $x\in {\bf M}_v(O_v)$ by Galois descent. By the above condition 1) and 5), one can apply Lemma \ref{integ} to $({\bf T}_v={\bf T}\times_{O_{k,S}} O_v \hookrightarrow {\bf M}_v)$ and obtain a surjective homomorphism of group schemes ${\bf T}_v \xrightarrow{\phi} {\bf T}_1$ for some commutative group scheme ${\bf T}_1$ over $O_v$ such that
$$ ker(\phi) = \prod_{i=1}^h \Res_{O_{k_i}/O_{v}}(\Bbb G_{m, O_{k_i}})$$ where $O_{k_i}$'s are the rings of integers of finite unramified extensions $k_i/k_v$ for $1\leq i\leq h$. Moreover, this map $\phi$ can be extended to a morphism ${\bf M}_v \xrightarrow{\phi} {\bf T}_1$.
Since $H^1_{et}(O_v, ker(\phi))=0$, one has ${\bf T}_v(O_v) \xrightarrow{\phi} {\bf T}_1(O_v)$ is surjective by \'etale cohomology.
If $x_v\not\in {\bf T}(O_v)$, there is $t_v\in {\bf T}_v(O_v)$ such that $\phi(x_v)=\phi(t_v)$. By Proposition \ref{str} or the proof of Proposition \ref{inv}, one has
$$\inv_v(\xi(x_v))=\inv_v(\xi(t_v))$$ for all $\xi\in \Br_1(X)$. Therefore one can replace $x_v$ with $t_v$ if necessary.

Therefore one can assume $$(x_v)_{v\in \Omega_k} \in [\prod_{v\in S_1} (W_v\cap Y(k_v))\times \prod_{v\not\in S_1} {\bf Y}(O_v)]\cap  Y(\mathbf A_k)^{\Br_a(Y)}$$ by the above condition 3) and $\Br_a(X)\cong \Br_a(Y)$ induced by open immersion.  By proposition \ref{typed}, there is $y\in Y(k)\subseteq X(k)$ such that $$y\in \prod_{v\in S_1} (W_v\cap Y(k_v))\times \prod_{v\not\in S_1} {\bf Y}(O_v) \subseteq \prod_{v\in \infty} X(k_v) \times \prod_{v\not\in \infty_k} W_v $$ by the above condition 4) as desired. \end{proof}

\section{An example}

At the end of \cite{HaVo}, Harari and Voloch constructed an open curve which does not satisfy strong approximation with Brauer-Manin obstruction. However their counter-example is not geometrically rational. Colliot-Th\'el\`ene and Wittenberg gave an open rational surface over $\Bbb Q$ (Example 5.10 in \cite{CTW})  which does not satisfy strong approximation with Brauer-Manin obstruction. Here we provide another such open rational surface. We explain that the complement of a point in a toric variety may no longer satisfy strong approximation with Brauer-Manin obstruction. We also show that strong approximation with Brauer-Manin obstruction is not stable under finite extensions of the ground field.

Before giving the explicit example, we have the following lemma.

\begin{lem}\label{fiber} Let $f: \ X\rightarrow Y$ be a morphism of schemes over a number field $k$ such that the induced map $f^*: \Br(Y) \rightarrow \Br(X)$ is surjective.
If $Y(k)$ is discrete in $Y(\mathbf A_k^S)$ and $X$ satisfies strong approximation with Brauer-Manin obstruction off $S$ for some finite subset $S$ of $\Omega_k$, then any fiber $f^{-1}(y)$ satisfies strong approximation off $S$ for $y\in Y(k)$.
\end{lem}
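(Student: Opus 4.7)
The plan is to lift the approximation problem on $F=f^{-1}(y)$ to the total space $X$, apply strong approximation with Brauer-Manin obstruction on $X$, and then exploit the discreteness of $Y(k)$ in $Y(\mathbf A_k^S)$ to force the resulting rational point of $X$ to land inside the fiber $F$. The surjectivity of $f^*:\Br(Y)\to\Br(X)$ is precisely what will let us promote an arbitrary adelic point of $F$ to an element of $X(\mathbf A_k)^{\Br(X)}$, since on such a point every Brauer class on $X$ pulls back from a constant value at $y\in Y(k)$ whose invariant sum vanishes by reciprocity.

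I would fix $y\in Y(k)$ and an open neighborhood $W\subseteq F(\mathbf A_k^S)$ of some $(x_v)_{v\notin S}\in F(\mathbf A_k^S)$. Since $F$ is a closed subscheme of $X$, the inclusion $F(\mathbf A_k^S)\hookrightarrow X(\mathbf A_k^S)$ is a closed embedding and $W$ is cut out on $F(\mathbf A_k^S)$ by some open $\tilde W\subseteq X(\mathbf A_k^S)$. Using the discreteness of $Y(k)$ in $Y(\mathbf A_k^S)$, I would pick an open neighborhood $V$ of the diagonal image of $y$ with $V\cap Y(k)=\{y\}$ and shrink $\tilde W$ so that its image under $f$ lies in $V$. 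One may assume $F(k_v)\neq\emptyset$ for every $v\in S$, for otherwise $F(k)=\emptyset$ and the claim is vacuous (approximation then forces $F(\mathbf A_k^S)=\emptyset$ as well).

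Next, choose any $x_v\in F(k_v)$ for $v\in S$; the resulting $(x_v)_{v\in\Omega_k}$ is an adelic point of $X$ with $f(x_v)=y$ at every place. For any $\alpha\in\Br(X)$ the surjectivity of $f^*$ yields $\beta\in\Br(Y)$ with $\alpha=f^*\beta$, so $\alpha(x_v)=\beta(y)$ for all $v$, and global reciprocity applied to $y\in Y(k)$ gives $\sum_v\inv_v(\alpha(x_v))=0$. Thus $(x_v)\in X(\mathbf A_k)^{\Br(X)}$, and strong approximation with Brauer-Manin obstruction off $S$ for $X$ produces $x\in X(k)$ whose image in $X(\mathbf A_k^S)$ lies in $\tilde W$. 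By construction $(f(x))_{v\notin S}\in V$, hence $f(x)\in V\cap Y(k)=\{y\}$, so $x\in F(k)$ and $(x)_{v\notin S}\in W$, as required. The main obstacle is really bookkeeping: one must shrink $\tilde W$ simultaneously to enforce the target neighborhood inside $F$ and the discreteness constraint on $f(x)$, after which the Brauer-Manin verification and the invocation of strong approximation on $X$ are formal.
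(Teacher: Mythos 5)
Your proof follows essentially the same route as the paper's own argument: isolate $y$ via discreteness of $Y(k)$ in $Y({\mathbf A}_k^S)$, lift the open set on the fiber to $X({\mathbf A}_k^S)$ and intersect with $f^{-1}$ of a neighborhood of $y$, verify the resulting adelic point lies in $X({\mathbf A}_k)^{\Br(X)}$ using surjectivity of $f^*$ together with reciprocity at $y\in Y(k)$, invoke strong approximation with Brauer--Manin obstruction for $X$, and use the discreteness constraint to force the rational point into the fiber. The one blemish is the parenthetical claim that $F(k_v)=\emptyset$ for some $v\in S$ would force $F({\mathbf A}_k^S)=\emptyset$ --- that does not follow; rather, both your argument and the paper's tacitly use $F(k_v)\neq\emptyset$ for all $v\in S$ (which holds in the lemma's application), since otherwise one cannot complete the $S$-adelic point to one in $X({\mathbf A}_k)^{\Br(X)}$.
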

\begin{proof} Since $Y(k)$ is discrete in $Y(\mathbf A_k^S)$, there is an open subset $U_y$ of $Y(\mathbf A_k^S)$ such that $$Y(k)\cap U_y=\{y\}$$ for each $y\in Y(k)$. Let $$(x_v)_{v\not\in S}\in W \subseteq f^{-1}(y)(\mathbf A_k^S)$$ be a non-empty open subset. Since $f^{-1}(y)$ is a closed sub-scheme of $X$, there is an open subset $W_1$ of $X(\mathbf A_k^S)$ such that $W=W_1\cap [f^{-1}(y)(\mathbf A_k^S)]$. Let $x_v\in f^{-1}(y)(k_v)$ for $v\in S$. Then
$$ (x_v)_{v\in \Omega_k} \in [\prod_{v\in S} X(k_v) \times (W_1\cap f^{-1}(U_y))] \cap X(\mathbf A_k)^{\Br(X)} \neq \emptyset $$ by the surjection of $f^*: \Br(Y) \rightarrow \Br(X)$ and the functoriality of Brauer-Manin pairing. Since $X$ satisfies strong approximation with Brauer-Manin obstruction off $S$, there is $x\in X(k)$ such that $x\in W_1\cap f^{-1}(U_y)$. This implies that $f(x)\in U_y$ and $f(x)=y$. Therefore $x\in W$ as desired.\end{proof}

\begin{exa}\label{contre} Let $X=(\Bbb A^1\times_{k} \Bbb G_m) \setminus \{ (0,1) \} $ be a rational open surface over a number field $k$.

 1) If $k=\Bbb Q$ or an imaginary quadratic field, then $X$ does not satisfy strong approximation with Brauer-Manin obstruction off $\infty_k$.

 2) Otherwise $X$ satisfies strong approximation with Brauer-Manin obstruction off $\infty_k$.
\end{exa}
\begin{proof} 1) If $k=\Bbb Q$ or an imaginary quadratic field, one takes $Y=\Bbb G_m$ and the morphism $f: X\rightarrow Y$ by restriction of the projection map $\Bbb A^1\times_k \Bbb G_m \rightarrow \Bbb G_m$ to $X$. Since $O_k^\times$ is finite, one has $Y(k)$ is discrete in $Y(\mathbf A_{k}^\infty)$. The morphism $f$ induces an isomorphism $$f^*: \Br(Y)=\Br(\Bbb G_m) \xrightarrow{\cong} \Br(\Bbb A^1\times \Bbb G_m)=\Br(X). $$ Suppose $X$ satisfies strong approximation with Brauer-Manin obstruction off $\infty_k$. Then all fibers $f^{-1}(y)$ satisfy strong approximation off $\infty_k$ by Lemma \ref{fiber}. However $f^{-1}(1) \cong \Bbb G_m$ does not satisfy strong approximation off $\infty_k$. A contradiction is derived.

2)  Let $W=\prod_{v\in \Omega_k} W_v$ be an open subset in $X(\mathbf A_k)$ with $ (x_v)_{v\in \Omega_k} \in W\cap X(\mathbf A_k)^{\Br_1(X)}$. There is a finite subset $S$ of $\Omega_k$ containing $\infty_k$ such that
$$\begin{cases} x_v \in U_v\times V_v \subseteq W_v \subseteq (k_v^\times\times k_v\setminus \{(1,0)\})  \ \ \ & \text{for} \  v\in S \\
x_v\in W_v={\bf X}(O_v) = (O_v^\times\times O_v^\times) \cup ((O_v^\times\setminus (1+\pi_v O_v)) \times O_v) \ \ \  & \text{for} \  v\not\in S
\end{cases} $$ where $U_v$ and $V_v$ are the open subsets of $k_v^\times$ and $k_v$ respectively for $v\in S$ and $\pi_v$ is the uniformizer of $k_v$ for $v\not\in S$. Consider two projection
$$ p: \Bbb G_m \times_k \Bbb A^1 \rightarrow \Bbb G_m \ \ \ \text{and} \ \ \ q: \Bbb G_m \times_k \Bbb A^1 \rightarrow \Bbb A^1 . $$

If $k$ is neither $\Bbb Q$ nor an imaginary quadratic field, then $O_k^\times$ is infinite. Therefore $k^\times$ is not discrete in $\Bbb G_m(\mathbf A_k^\infty)$. Since $k^\times $ is dense in $\rm{Pr_\infty}(\Bbb G_m(\mathbf A_k)^{\Br_a(\Bbb G_m)})$, one concludes that $k^\times \setminus \{1\} $ is also dense in $\rm{Pr_\infty}(\Bbb G_m(\mathbf A_k)^{\Br_a(\Bbb G_m)})$. By the functoriality of Brauer-Manin pairing, one has $p((x_v))\in \Bbb G_m(\mathbf A_k)^{\Br_a(\Bbb G_m)}$. Choose an open subset $\prod_{v\in \Omega_k} M_v$ of $\Bbb G_m(\mathbf A_k)$ containing $p((x_v)_{v\in \Omega_k})$ such that
 $$ \begin{cases}
M_v=k_v^\times  \ \ \ & v\in \infty_k \\
 M_v=U_v \ \ \ & v\in S\setminus \infty_k \\
 M_v = O_v^\times \ \ \ & v\not\in S
 \end{cases} $$
There is $b\in k^\times \setminus \{1 \}$ such that $b\in \prod_{v\in \Omega_k} M_v$.  Let $S_1$ be a finite subset of $\Omega_k$ containing $S$ such that $b-1\in O_v^\times$ for all $v\not\in S_1$. Choose an open subset $\prod_{v\in \Omega_k} N_v$ of $\mathbf A_k$
 $$ \begin{cases}
N_v=k_v \ \ \ & v\in \infty_k \\
 N_v=V_v \ \ \ & v\in S \\
 N_v = O_v^\times \ \ \ & v\in S_1\setminus S \\
 N_v = O_v \ \ \ &  v\not\in S_1
 \end{cases} $$
Then there is $c\in k^\times$ such that $c\in \prod_{v\in \Omega_k} N_v$ by strong approximation for $\Bbb A^1$. Then $(b,c)\in W$ as desired.
\end{proof}

\bigskip

\noindent{\bf Acknowledgements.}
Special thanks are due to J.-L. Colliot-Th\'el\`ene who suggested significant improvement from the original version. We would like to thank Jiangxue Fang for helpful discussion on toric varieties and David Harari for drawing our attention to \cite{ChTs}. Part of the work was done when the second named author visited IHES from Nov.2013 to Dec.2013 and was also supported by NSFC grant no. 11031004.

\begin{bibdiv}

\begin{biblist}

\bib{BD} {article} {
    author={Borovoi, Mikhail},
    author={Demarche, C.},
    title={Manin obstruction to strong approximation for homogeneous spaces},
    journal={Comment. Math. Helv.},
    volume={88},
    date={2013},
    Pages={1-54},
}

\bib {Bo}{book}{
    author={A. Borel},
     title={Linear algebraic groups},
     publisher={Springer-Verlag},
     place={},
      journal={},
     series={GTM},
    volume={126},
    date={1991},
    number={ },
     pages={},
}

\bib{CLS}{book}{
   author={D.Cox},
   author={J.Little},
   author={H.Schenck},
   title={Toric Varieties},
   publisher={Amer.Math.Soc.},
   place={}
   series={Grad.Stud. in Math.},
   volume={124},
   date={2011}
}

\bib{ChTs} {article} {
    author={Chambert-Loir, A.},
    author={Tschinkel, Y.},
    title={Integral points of bounded height on toric varieties},
    journal={arXiv:1006.3345v2},
    volume={},
    date={2012},
    Pages={},
}

\bib{CT93} {article} {
    author={Colliot-Th\'el\`ene, J.-L.},
    title={Birational invariants, purity and the Gersten conjecture},
    journal={in K-Theory and Algebraic Geometry: Connections with Quadratic Forms and Division Algebras, Proceedings of Symposia in Pure Mathematics, Part I,},
    volume={58},
    date={1992},
    Pages={1-64},
}

\bib{CTH} {article} {
    author={Colliot-Th\'el\`ene, J.-L.},
    author={Harari, D.},
    title={Approximation forte en famille},
    journal={to appear in J. reine angew. Math.},
    volume={},
    date={},
    Pages={},
}
\bib{CTS77} {article} {
    author={Colliot-Th\'el\`ene, J.-L.},
    author={Sansuc, J.-J.},
    title={La R-\'equivalence sur les tores},
    journal={Ann.Sc.\'Ecole Normale Sup\'erieure},
    volume={10},
    date={1977},
    Pages={175-230},
}

\bib{CTS} {article} {
    author={Colliot-Th\'el\`ene, J.-L.},
    author={Sansuc, J.-J.},
    title={Cohomologie des groupes de type multiplicatif sur les sch\'emas r\'eguliers},
    journal={C.R.Acad.Sci.Paris},
    volume={287},
    date={1978},
    Pages={449-452},
}

\bib{CTS87} {article} {
    author={Colliot-Th\'el\`ene, J.-L.},
    author={Sansuc, J.-J.},
    title={La descente sur les vari\'et\'es rationnelles II},
    journal={Duke Math. J.},
    volume={54},
    date={1987},
    Pages={375-492},
}

\bib{CTW} {article} {
    author={Colliot-Th\'el\`ene, J.-L.},
    author={Wittenberg, O.},
    title={Groupe de Brauer et points entiers de deux familles de surfaces cubiques affines},
    journal={Amer. J. Math.},
    volume={134},
    date={2012},
    Pages={1303-1327},
}

\bib{CTX} {article} {
    author={J.-L. Colliot-Th\'el\`ene},
    author={F. Xu},
    title={Brauer-Manin obstruction for integral points of homogeneous spaces and
         representations by integral quadratic forms},
    journal={Compositio Math.},
    volume={145},
    date={2009},
    Pages={309-363},
}

\bib{CTX1} {article} {
    author={J.-L. Colliot-Th\'el\`ene},
    author={F. Xu},
    title={Strong approximation for the total space of certain quadric fibrations},
    journal={Acta Arithmetica},
    volume={157},
    date={2013},
    Pages={169-199},
}

\bib{D} {article} {
    author={Demarche, C.},
    title={Le d\'efaut d'approximation forte dans les groupes lin\'eaires connexes},
    journal={Proc.London Math.Soc.},
    volume={102},
    date={2011},
    Pages={563-597},
}

\bib{SGA3}{book}{
  title={Groupes de type multiplicatif, et structure des sch\'emas en groupes g\'en\'eraux},
  author={Demazure, M.},
   author={Grothendieck, A.},
  series={Lecture Notes in Mathematics},
   volume={152  (SGA3,II.) },
   publisher={Springer}
   date={1970},
   pages={}
}

\bib{Fu}{book}{
    author={Fu, L.},
     title={\'Etale cohomology theory},
       volume={ },
     publisher={World Scientific},
     place={},
      date={2011},
}

\bib {Fulton}{book}{
    author={W. Fulton},
     title={Introduction to toric varieties},
     publisher={Princeton Univ. Press},
     place={Princeton, NJ},
      journal={},
     series={Annals of Mathematics Studies},
    volume={131},
    date={1993},
    number={ },
     pages={},
}

\bib{G} {article} {
    author={Grothendieck, A.},
    title={Le groupe de Brauer (I, II, III)},
    journal={Dix \'exposes sur la cohomologie des sch\'ema},
    date={1968},
    Pages={46-189},
}

\bib {Ha08}{article} {
    author={D. Harari},
    title={Le d\'efaut d'approximation forte pour les groupes alg\'ebriques commutatifs},
    journal={Algebra and Number Theory},
    volume={2},
    date={2008},
    pages={595-611},
    }

\bib{HaVo}{article}{
    author={Harari, D.},
    author={Voloch, J.F.},
 title={The Brauer-Manin obstruction for integral points on curves},
  journal={Math. Proc. Cambridge Philos. Soc.},
    volume={149},
      date={2010},
    pages={413-421},
    number={}
 }

\bib{Milne80}{book}{
    author={Milne, J.S.},
     title={\'Etale cohomology},
       volume={ },
     publisher={Princeton University Press},
     place={},
      date={1980},
}

\bib {Oda}{book}{
    author={T. Oda},
     title={Convex bodies and algebraic geometry},
     publisher={Springer-Verlag},
     place={},
      journal={},
     series={Ergebnisse der Mathematik und ihrer Grenzgebiete, 3. Folge. A series of modern surveys in mathematics},
    volume={15},
    date={1987},
    number={ },
     pages={},
}

\bib {PR}{book}{
    author={V.P. Platonov},
    author={A.S. Rapinchuk}
     title={Algebraic groups and number theory},
     publisher={Academic Press},
     place={},
     journal={ },
     series={},
    volume={},
    date={1994},
    number={ },
     pages={},
}

\bib{Sansuc} {article} {
    author={Sansuc, J.-J.},
    title={Groupe de Brauer et arithm\'etique des groupes alg\'ebriques lin\'eaires sur un corps
de nombres},
    journal={J. reine angew. Math.},
    volume={327},
    date={1981},
    pages={12-80},
}

\bib {Sko}{book}{
    author={A. N. Skorobogatov},
     title={Torsors and rational points},
     publisher={Cambridge University Press},
     place={},
      journal={ },
            series={Cambridge Tracts in Mathematics},
    volume={144},
    date={2001},
    number={ },
     pages={},
}

\bib {Su}{article} {
    author={H. Sumihiro},
    title={Equivariant completion I},
    journal={J. Math. Kyoto Univ.},
    volume={14},
    date={1974},
    pages={1-28},
    }

\bib{W} {article} {
    author={Wei, D.},
    title={Strong approximation for the variety containing a torus},
    journal={arXiv:1403.1035},
    volume={},
    date={2014},
    Pages={},
}

\bib{WX}{article} {
    author={Wei, Dasheng},
    author={Xu, Fei},
    title={Integral points for groups of multiplicative type},
    journal={Adv. in Math.},
    volume={232},
    date={2013},
    Pages={36-56},
}

\end{biblist}
\end{bibdiv}

\end{document}